\newtheorem{theorem}{Theorem}[section]
\newtheorem{fact}[theorem]{Fact}
\newtheorem{lemma}[theorem]{Lemma}
\newtheorem{cla}[theorem]{Claim}
\newtheorem{corollary}[theorem]{Corollary}
\newtheorem{proposition}[theorem]{Proposition}
\theoremstyle{definition}
\newtheorem{definition}[theorem]{Definition}
\newtheorem{example}[theorem]{Example}
\newtheorem{remark}[theorem]{Remark}
\newtheorem{question}[theorem]{Question}
\newcommand{\abar}{\bar{a}}
\def\tp{\operatorname{tp}}
\def\Av{\operatorname{Av}}
\def\gen{\operatorname{gen}}
\def\doag{\operatorname{doag}}
\def\cl{\operatorname{cl}}
\def\Stab{\operatorname{Stab}}
\def\msM{\mathfrak{M}_{x}(M)}
\def\mA{\mathfrak{M}_{x}(A)}
\def\cL{\mathcal{L}}
\def\cU{\mathcal{U}}
\def\Av{\operatorname{Av}}
\def\conv{\operatorname{conv}}
\def\supp{S}
\def\inva{\operatorname{inv}}
\def\Ind{\setbox0=\hbox{$x$}\kern\wd0\hbox to 0pt{\hss$\mid$\hss}
\lower.9\ht0\hbox to 0pt{\hss$\smile$\hss}\kern\wd0}
\def\Notind{\setbox0=\hbox{$x$}\kern\wd0\hbox to 0pt{\mathchardef
\nn=12854\hss$\nn$\kern1.4\wd0\hss}\hbox to
0pt{\hss$\mid$\hss}\lower.9\ht0 \hbox to 0pt{\hss$\smile$\hss}\kern\wd0}
\def\ind{\mathop{\mathpalette\Ind{}}}
\title{Definable convolution and idempotent Keisler measures}
\author[A. Chernikov]{Artem Chernikov}
\author[K. Gannon]{Kyle Gannon}
\address{Department of Mathematics\\
University of California Los Angeles\\
Los Angeles, CA 90095-1555, USA}
\email{chernikov@math.ucla.edu}
\address{Department of Mathematics\\
University of Notre Dame\\
Notre Dame, IN, 46656, USA}
\email{kgannon1@nd.edu}
\begin{document}

\begin{abstract} We initiate a systematic study of the  convolution operation on Keisler measures, generalizing the work of Newelski in the case of types. Adapting results of Glicksberg, we show that the supports of definable and finitely satisfiable (or just definable, assuming NIP) measures are nice semigroups, and classify idempotent measures in stable groups as invariant measures on type-definable subgroups.  We establish left-continuity of the convolution map in NIP theories, and use it to show that the convolution semigroup on finitely satisfiable measures is isomorphic to a particular Ellis semigroup in this context.
\end{abstract}

\maketitle
\vspace{-15pt} 

\section{Introduction} 

Various notions and ideas from topological dynamics were introduced  into the model-theoretic study of definable group actions by Newelski \cite{N1,N2}. A fundamental observation is that certain spaces of types over a definable group carry a natural algebraic structure of a (left-continuous) semigroup, with respect to the ``independent product'' of types. In a rather wide context, this operation can be extended from types to  general \emph{Keisler measures} on a definable group
(i.e.~finitely additive probability measures on the Boolean algebra of definable subsets), where it corresponds to \emph{convolution} of measures.
We first recall the classical setting. When $G$ is a locally-compact topological group, then the space of regular Borel probability measures on $G$ is equipped with the convolution product: if $\mu$ and $\nu$ are bounded measures on $G$, then their product is the measure $\mu*\nu$ on $G$ defined via
\begin{equation*}
\mu * \nu(A) = \int_{y \in G} \int_{x \in G} \chi_{A}(x\cdot y) d\mu(x) d\nu(y),
\end{equation*} for an arbitrary Borel set $A \subseteq G$ (where $\chi_A$ is the characteristic function of $A$). And a measure $\mu$ is \emph{idempotent} if $\mu * \mu = \mu$. 
A classical theorem of Wendel \cite{Wendel} shows that if $G$ is a compact topological group and $\mu$ is a regular Borel probability measure on $G$, then $\mu$ is idempotent if and only if  the support of $\mu$ is a compact subgroup of $G$,  and the restriction of $\mu$ to this subgroup is the (bi-invariant) Haar measure. Wendel's result was extended to locally compact abelian groups by Rudin \cite{Rudin} and Cohen \cite{Cohen}, and this line of research continued into the study of the structure of idempotent measures on (semi-)topological semigroups, in particular in the work of Glicksberg \cite{Glicksberg1, Glicksberg2} and Pym \cite{Pym1,Pym2}.

%\begin{theorem}[Wendel \cite{Wendel}]\label{introtheorem} Let $H$ be a compact topological group and $\mu$ an regular Borel probability measure. Then $\mu$ is idempotent if and only if the support of $\mu$ is a compact subgroups of $H$ and the restriction of $\mu$ to the this subgroup is the unique (bi-invariant) normed Haar measure.
%\end{theorem} 
%
%Conceptually, this result links the existence of an idempotent measure with the existence of algebraic structure. We remark that this line of research was extended to the class of locally compact abelian groups by Rudin \cite{Rudin} and Cohen \cite{Cohen}. Fortunately for model theory, individuals such as Glicksberg \cite{Glicksberg1,Glicksberg2} and Pym \cite{Pym1,Pym2} continued this research into semitopological semigroup settings. The two main questions we consider: 

In this paper we consider the counterpart of these developments in the definable category, i.e.~ for definable groups and Keisler measures on them. In particular, we aim to address the following questions.
\begin{enumerate}
\item[(Q1)] Under what conditions the convolution product of two global Keisler measures can be defined? 
\item[(Q2)] What algebraic structures arise from idempotency of a Keisler measure? 
\item[(Q3)] Is there a connection between the convolution semigroups of Keisler measures and Ellis semigroups?
\end{enumerate}

We begin by reviewing some (mostly standard) material on Keisler measures in Section \ref{sec: prelims on Keisl meas}: we recall various classes of measures (invariant, Borel-definable, finitely satisfiable, finitely approximable, smooth), summarize the relationship between them (in general, as well as in NIP and stable theories) and discuss supports of measures. In particular, in Proposition \ref{convfs} we give a topological characterization of the space of measures finitely satisfiable over a small model $M$,  and in Lemma \ref{lem: sup on inv types} we make a couple of observations on \emph{invariantly supported} measures (i.e.~global measures such that all types in their support are (automorphism-)invariant over a fixed small model).

In Section \ref{sec: ext prod of measures} we extend the usual product  $\otimes$ of Borel-definable measures to a slightly larger context. Namely, when defining $\mu \otimes \nu$, we only require the level functions of the measure $\mu$  to be Borel \emph{restricted to the support of $\nu$} (Definition \ref{Borel}). It is equivalent to the standard definition when $\mu$ is Borel-definable, but allows one to evaluate the product of an arbitrary invariant measure $\mu$ with an arbitrary type $p$ for example (and this extends the usual independent product of invariant types, see Proposition \ref{prop: gen prod ext type prod}).
In relation to (Q1), in Section \ref{sec: def of conv} we define the convolution operation on \emph{$*$-Borel pairs} of Keisler measures in terms of this generalized product of measures (Definition \ref{def: conv})    and observe some of its basic properties, in particular that it extends the independent product of arbitrary invariant types in a group (Proposition \ref{Newelski}).

In Section \ref{sec: idemp}, we begin investigating idempotent Keisler measures. In Proposition \ref{wow} we observe that every invariant measure on a type-definable subgroup is idempotent (the extended $\otimes$-product is needed for this to hold without any definability assumptions on the invariant measure). Mirroring the classical situation in Wendel's theorem, the expectation is that in tame contexts all idempotent measures should arise in this way. In the case of a definably amenable NIP group, invariant measures were classified in \cite{CS}. We observe in Proposition \ref{prop: bdd index def am NIP} that a type-definable subgroup of bounded index of a definably amenable NIP group is still definably amenable (and the analysis from \cite{CS} extends to it). We also point out that, as a consequence of Wendel's theorem, idempotent measures finitely supported on realized types correspond to finite subgroups (Proposition \ref{prop: finite realized support}); and that in an abelian NIP group, the class of idempotent generically stable measures is closed under convolution (Proposition \ref{prop: commute preserves idemp}).

In Section \ref{sec: supports}, we study the supports of idempotent Keisler measures (question (Q2) above). In the proof of Wendel's theorem (as well as Glicksberg's proof in the abelian semitopological semigroup case \cite{Glicksberg2}), an idempotent regular Borel measure $\mu$ is associated to a closed subgroup given by its support. In particular, $\supp(\mu)$ is a closed group and $\mu|_{\supp(\mu)}$ is its associated (bi-invariant) Haar measure. In the general model-theoretic context the situation is not as nice (see Examples \ref{exa: 1} and \ref{circle}). However, adapting some of Glicksberg's work to our context, we show that if $\mu$ is definable, invariantly supported and idempotent, then $\left(\supp(\mu),* \right)$ (with respect to the usual independent product of invariant types) is a compact, left-continuous  semigroup with no closed two-sided ideals (Corollary \ref{qcont} and Theorem \ref{two}). This assumption is satisfied when $\mu$ is a dfs measure in an arbitrary theory, or when $\mu$ is an arbitrary definable measure in an NIP theory.
We also deduce that if $\supp(\mu)$ has no proper closed left ideals, then $\mu$ is ``generically'' invariant restricting to its support (Corollary \ref{cor: meas on sup is invariant}). It follows that  in abelian stable groups, the supports of the idempotent measures are precisely the closed subgroups of the convolution semigroup on the space of types (Corollary \ref{cor: supp stab ab}); which leads to a quick description of idempotent measures in strongly minimal groups (Example \ref{exa: str min}).

In Section \ref{sec: stable groups} we classify idempotent measures on a stable group, demonstrating that they are precisely the invariant measures on its type-definable subgroups. More precisely, every idempotent measure is the unique invariant Keisler measure on its own (type-definable) stabilizer. Our proof relies on the results of the previous section and a variant of Hrushovski's group chunk theorem due to Newelski \cite{N3}.

Concerning question (Q3), it was observed by Newelski \cite{N1} that the convolution semigroup $(S_{x}(\mathcal{G},G),*)$ on the space of global types finitely satisfiable in a small model $G \prec \mathcal{G}$ is isomorphic to the enveloping \emph{Ellis semigroup} $E(S_{x}(\mathcal{G},G),G)$ of the action of $G$ on this space of types.
Ellis semigroups for definable group actions in the context of NIP theories were previously considered in e.g.~\cite{MR3245144, CS}, to which we refer for a general discussion.
In Section \ref{sec: Ellis calc}, under the NIP assumption, we obtain an analogous description for the convolution semigroup $(\mathfrak{M}_{x}(\mathcal{G},G),*)$ on the space of global Keisler measures finitely satisfiable in a small model. Namely, in Theorem \ref{thm: Ellis grp iso} we show that it is isomorphic to the Ellis semigroup  $E(\mathfrak{M}_{x}(\mathcal{G},G),\conv(G))$ of the action of $\conv(G)$, the convex hull of $G$ in the space of global measures finitely satisfiable on $G$,  on this space of measures (see Remark \ref{rem : without conv} on why the convex hull is necessary). Our proof relies in particular on left-continuity of convolution of invariant measures in NIP theories established in Section \ref{sec: left-cont of conv} using approximation arguments with smooth measures.

\subsection*{Acknowledgements}
We thank the referee for many helpful suggestions on improving the paper. This work constitutes part of the Ph.D dissertation of the second named author.
We thank Sergei Starchenko for several helpful conversations on the topics considered here. 
Both authors were partially supported by the NSF CAREER grant DMS-1651321, and Gannon was additionally supported by the NSF conference grant DMS-1922826.

\section{Preliminaries on Keisler measures}\label{sec: prelims on Keisl meas}

\subsection{Basic facts about Keisler measures} For the majority of this article, we focus on global Keisler measures and their relationship to small elementary submodels. In this section we recall some of the material from \cite{Keisl, NIP1, NIP2, NIP3, NIP5, Gannon}, and refer to e.g.~\cite[Chapter 7]{Guide} for a more detailed introduction to the subject, or \cite{StBourb, CheSurv} for a survey.

Given $r_1, r_2 \in \mathbb{R}$ and $\varepsilon \in \mathbb{R}_{>0}$, we  write $r_1 \approx_{\varepsilon} r_2 $ if $|r_1 - r_2| < \varepsilon$. Let $T$ be a first order theory in a language $\mathcal{L}$ and assume that $\mathcal{U}$ is a sufficiently saturated model of $T$ (we make no assumption on $T$ unless explicitly stated otherwise). In this section, we write $x,y,z, \ldots$ to denote arbitrary finite tuples of variables. If $x$ is a tuple of variables and $A \subseteq \mathcal{U}$, then $\mathcal{L}_{x}(A)$ is the collection of formulas with free variables in $x$ and parameters from $A$, up to logical equivalence (which we identify with the corresponding Boolean algebra of definable subsets of $\cU^x$). We write $\mathcal{L}_{x}$ for $\mathcal{L}_{x}(\emptyset)$. Given a partitioned formula $\varphi(x;y)$, we let $\varphi^*(y;x) := \varphi(x;y)$ be the partitioned formula with the roles of $x$ and $y$ reversed.
As usual, $S_x(A)$ denotes the space of types over $A$, and if $A \subseteq B \subseteq \cU$ then $S_x(B,A)$ (respectively, $S^{\inva}_x(B,A)$) denotes the closed set of types in $S_x(B)$ that are finitely satisfiable in $A$ (respectively, invariant over $A$). 
%And $S^{\fs}_x(\cU) = \bigcup \{ S_x(\cU,M) : M \prec \cU \textrm{ small} \}, S^{\inva}_x(\cU) = \bigcup \{ S^{\inva}_x(\cU,M) : M \prec \cU \textrm{ small} \}$.
For any set $A \subseteq \mathcal{U}$, a \emph{Keisler measure} over $A$ in variables $x$ is a finitely additive probability measure on $\mathcal{L}_{x}(A)$. We denote the space of Keisler measures over $A$ (in variables $x$) as $\mA$. Every element of $\mA$ is in unique correspondence with a regular Borel probability measure on the space $S_{x}(A)$, and we will routinely use this correspondence. If $M_0 \preceq M \preceq \cU$ are small models, then there is an obvious restriction map $r_{0}$ from $\msM$ to $\mathfrak{M}_{x}(M_0)$ and we denote $r_{0}(\mu)$ simply as $\mu|_{M_0}$. Conversely, every $\mu \in \mathfrak{M}_{x}(M_0)$ admits an extension to some $\mu' \in \mathfrak{M}_{x}(M)$ (not necessarily a unique one).

The space $\mA$ is a compact Hausdorff space with the topology induced from $[0,1]^{\mathcal{L}_{x}(A)}$. This is the coarsest topology on the set $\mA$ such that for any continuous function $f:S_{x}(A) \to \mathbb{R}$, the map $\mu \to \int f d\mu$ is continuous. If $M_0 \preceq M$, then under this topology, the restriction map $r_0$ is continuous.  We identify every $p \in S_x(A)$ with the corresponding Dirac measure $\delta_p \in \mA$, and under this identification $S_x(A)$ is a closed subset of $\mA$.

We recall several important properties of global measures that will make an appearance in this article.
\begin{definition}\label{def: props of measures} Let $\mu \in \mathfrak{M}_{x}(\mathcal{U})$ be a global Keisler measure.
\begin{enumerate}
\item  $\mu$ is \emph{invariant} if there is a small model $M \prec \mathcal{U}$ such that for any partitioned $\cL(M)$-formula $\varphi(x;y)$ and any $b,b'\in \mathcal{U}^y$, if $b\equiv_M b'$ then $\mu(\varphi(x;b))=\mu(\varphi(x;b'))$. In this case, we say $\mu$ is \emph{$M$-invariant}.
 We let $\mathfrak{M}^{\inva}_x(\cU, M)$ denote the closed set of all $M$-invariant measures in $\mathfrak{M}_x(\cU)$.
% and $\mathfrak{M}^{\inva}_x(\cU) = \bigcup \left\{\mathfrak{M}^{\inva}_x(\cU, M) : M \prec \cU  \textrm{ small} \right\}$. 
\item Assume that $\mu$ is $M$-invariant and $\varphi(x;y)$ is a partitioned $\cL(M)$-formula. We define the map $F_{\mu,M}^{\varphi}:S_{y}(M) \to [0,1]$ by $F_{\mu,M}^{\varphi}(q)=\mu(\varphi(x;b))$, where $b\models q$ (this is well-defined by $M$-invariance of $\mu$). 

\noindent We will often write $F_{\mu}^{\varphi}$ instead of $F_{\mu,M}^{\varphi}$ when the base model $M$ is clear from the context.
\item $\mu$ is \emph{Borel-definable} (respectively, \emph{definable}) if there is $M\prec\cU$ such that $\mu$ is $M$-invariant and for any partitioned $\cL(M)$-formula $\varphi(x;y)$, the map $F_{\mu, M}^{\varphi}$ is Borel-measurable (respectively, continuous). In this case, we say that $\mu$ is \emph{Borel-definable over $M$} (respectively, \emph{definable over $M$}). 
\item $\mu$ is \emph{finitely satisfiable in $M\prec \mathcal{U}$}  if for any $\cL_x(\mathcal{U})$-formula $\varphi(x)$, if $\mu(\varphi(x))>0$ then $\cU\models\varphi(a)$ for some $a\in M^x$.  We let $\mathfrak{M}_{x}(\mathcal{U},M)$ denote the closed set of measures in $\mathfrak{M}_{x}(\mathcal{U})$ which are finitely satisfiable in $M$.
% and $\mathfrak{M}^{\fs}_x(\cU) = \bigcup \left\{\mathfrak{M}_x(\cU, M) : M \prec \cU  \textrm{ small} \right\}$.
\item $\mu$ is \emph{dfs} if there is $M\prec\cU$ such that $\mu$ is both definable over $M$ and finitely satisfiable in $M$. Similarly, if this is the case, we say that $\mu$ is \emph{dfs over $M$}.
\item Given $\overline{a} \in (\cU^{x})^{<\omega}$, with $\overline{a} = (a_1,...,a_n)$, the associated \emph{average measure} $\Av_{\overline{a}} \in \mathfrak{M}_{x}(\mathcal{U})$ is defined by
\begin{equation*} \Av_{\overline{a}}(\varphi(x)) := \frac{|\{i: \mathcal{U} \models \varphi(a_i)\}| }{n}
\end{equation*}
for any $\varphi(x) \in \mathcal{L}_{x}(\cU)$.

\item $\mu$ is \emph{finitely approximated} if there is $M\prec\cU$ such that for any $\cL(M)$-formula $\varphi(x;y)$ and any $\varepsilon \in \mathbb{R}_{>0}$, there exist  $n \in \mathbb{N}_{\geq 1}$ and $\abar\in (M^x)^n$ such that for any $b\in\cU^{y}$, $\mu(\varphi(x;b)) \approx_{\varepsilon} \Av_{\abar}(\varphi(x;b))$. In this case, we call $\abar$ a \emph{$(\varphi,\varepsilon)$-approximation for $\mu$}, and we  say $\mu$ is \emph{finitely approximated in $M$}. 

\item $\mu$ is \emph{smooth} if there exists a small model $M \prec \mathcal{U}$ such that for any $N$ with $M \preceq N \preceq \cU$, there exists a unique measure $\mu' \in \mathfrak{M}_{x}(N)$ such that $\mu'|_M = \mu|_{M}$. In this case, we say that $\mu$ is \emph{smooth over $M$}.

\end{enumerate}  
\end{definition}

These properties are related as follows.
\begin{fact}\label{fac: props of measures relation} 
	\begin{enumerate}
\item In any theory $T$, given $\mu \in \mathfrak{M}_{x}(\mathcal{U})$, over any given $M \prec \cU$: 
\begin{enumerate}
	\item $\mu$ is smooth $\Rightarrow$ $\mu$ is finitely approximated \cite[Corollary 2.6]{NIP3};
	\item  $\mu$ is finitely approximated $\Rightarrow$ $\mu$ is dfs (e.g.~see \cite[Proposition 4.12]{Gannon});
	\item $\mu$ is definable $\Rightarrow$  $\mu$ is Borel-definable;
	\item if $\mu$ is either Borel-definable or finitely satisfiable, then $\mu$ is invariant.
\end{enumerate}  
\item Assuming $T$ is NIP, given $\mu \in \mathfrak{M}_{x}(\mathcal{U})$, over any $M \prec \cU$ we have additionally:
\begin{enumerate}
\item $\mu$ is invariant $\Rightarrow$ $\mu$ is Borel-definable (\cite[Corollary 4.9]{NIP2}, or \cite[Proposition 7.19]{Guide});
\item $\mu$ is dfs $\Rightarrow$ $\mu$ is finitely approximated \cite[Theorem 3.2]{NIP3}.
\end{enumerate}
\item Assuming $T$ is stable, given any $\mu \in \mathfrak{M}_{x}(\mathcal{U})$ we have moreover:
\begin{enumerate}
\item $\mu$ is finitely approximated (see e.g.~\cite[Lemma 4.3]{NIP5} for a direct proof);
\item for every $\mathcal{L}$-formula $\varphi(x;y)$, there exist types $(p_i)_{i \in \omega}$ in $S_x(\cU)$ and $(r_i)_{i\in \omega}, r_i \in [0,1]$ such that $\sum r_i = 1$, and taking $\mu' := \sum r_i \cdot p_i$ we have $\mu(\varphi(x;b)) = \mu'(\varphi(x;b))$ for all $b \in \cU^y$ \cite[Lemma 1.7]{Keisl};
\item If $T$ is $\omega$-stable, then there exist $(p_i)_{i \in \omega}$ in $S_x(\cU)$ and $(r_i)_{i\in \omega}, r_i \in [0,1]$ such that $\sum r_i = 1$ and $\mu = \sum r_i \cdot p_i$ (same as the proof of \cite[Lemma 1.7]{Keisl}, using boundedness of the global rank).
\end{enumerate}
\end{enumerate}
\end{fact}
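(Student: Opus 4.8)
The plan is to verify the listed implications in increasing order of difficulty, separating the purely formal ones (provable directly from the definitions) from those that rely on nontrivial external input. The formal implications can be dispatched at once. For (1)(c), continuity of each level function $F^\varphi_{\mu,M}$ trivially entails Borel-measurability, so every definable measure is Borel-definable with no further argument. For (1)(d), the Borel-definable case is immediate, since $M$-invariance is built into the definition of Borel-definability; for the finitely satisfiable case, suppose $\mu$ is finitely satisfiable in $M$ and $b \equiv_M b'$. I would show $\mu\bigl(\varphi(x;b) \wedge \neg\varphi(x;b')\bigr) = 0$: if this measure were positive, finite satisfiability would produce $a \in M^x$ with $\cU \models \varphi(a;b) \wedge \neg\varphi(a;b')$, but $\varphi(a;y)$ is an $\cL(M)$-formula, so $b \equiv_M b'$ forces $\cU \models \varphi(a;b) \leftrightarrow \varphi(a;b')$, a contradiction. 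By symmetry the reverse difference also has measure zero, whence $\mu(\varphi(x;b)) = \mu(\varphi(x;b'))$ and $\mu$ is $M$-invariant.

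Next I would treat (1)(b), that finite approximation implies dfs. Finite satisfiability is the easier half: if $\mu(\varphi(x)) > 0$, then any $(\varphi,\varepsilon)$-approximation $\abar \in (M^x)^n$ with $\varepsilon < \mu(\varphi(x))$ must contain some $a_i$ with $\cU \models \varphi(a_i)$, for otherwise $\Av_{\abar}(\varphi) = 0$ would fail to be within $\varepsilon$ of $\mu(\varphi)$. For definability, fix $\varphi(x;y)$; for $b \models q$ the map $q \mapsto \Av_{\abar}(\varphi(x;b))$ is a normalized sum of the indicators of the clopen sets $\{q \in S_y(M) : \varphi(a_i;y) \in q\}$ (each $a_i \in M$, so $\varphi(a_i;y)$ is over $M$), hence continuous on $S_y(M)$. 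As $\varepsilon \to 0$ these continuous functions converge uniformly to $F^\varphi_{\mu,M}$, and a uniform limit of continuous functions is continuous, so $\mu$ is definable over $M$.

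The remaining clauses rest on genuine external theorems, and here I would organize the references and indicate the mechanism rather than reprove them; I expect the NIP implications to be the main obstacle. Implication (1)(a) is \cite[Corollary 2.6]{NIP3}: smoothness pins $\mu$ down by the finite data of $\mu|_M$, and a compactness argument over the sets where a putative extension could disagree extracts a uniform finite approximation. The NIP clauses are where the real content lies, both routing through VC-theoretic statistical learning (uniform laws of large numbers / $\varepsilon$-approximations for finite-VC families): (2)(a) yields Borel-definability of invariant measures \cite[Corollary 4.9]{NIP2}, and (2)(b) upgrades dfs to finite approximation \cite[Theorem 3.2]{NIP3}. For the stable clauses, (3)(a) follows from definability of types together with the fundamental theorem of stability (or directly \cite[Lemma 4.3]{NIP5}), while (3)(b) and (3)(c) are Keisler's decomposition \cite[Lemma 1.7]{Keisl}: a rank/measure descent produces countably many types $p_i$ with weights $r_i$ summing to $1$ reproducing $\mu$ on a fixed $\varphi$, and globally as an honest convex combination once the rank is bounded, as in the $\omega$-stable case.
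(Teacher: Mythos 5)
Your proposal is correct and matches the paper's treatment: the paper states this as a Fact without proof, deferring each nontrivial clause ((1)(a), (2)(a), (2)(b), (3)(a)--(c)) to exactly the references you cite, and your direct verifications of the definitional implications (1)(b)--(d) are the standard arguments. The only hairline point is that in the definability half of (1)(b) the function $F^{\varphi}_{\mu,M}$ is well-defined only once $M$-invariance of $\mu$ is known, but your own ordering already supplies this (finite satisfiability together with (1)(d), or directly from the approximation, since averages over points of $M$ are $M$-invariant).
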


We have the following characterization of definability (see e.g.~\cite[Proposition 4.4]{Gannon}).
\begin{fact}\label{fac: chars of def meas} The following are equivalent for $\mu \in \mathfrak{M}_{x}(\mathcal{U})$ and $M \preceq \cU$.
\begin{enumerate}
\item The measure $\mu$ is definable over $M$.
\item For any partitioned $\cL(M)$-formula $\varphi(x;y)$ and any $\varepsilon > 0$, there exist formulas $\Phi_{1}(y),...,\Phi_{n}(y)$ such that each $\Phi_{i}(y) \in \mathcal{L}_{y}(M)$, the collection $\{\Phi_{i}(\cU): i \leq n\}$ forms a partition of $\mathcal{U}^{y}$, and if $\models \Phi_{i}(c) \wedge \Phi_{i}(c')$, then $|\mu(\varphi(x,c)) - \mu(\varphi(x,c'))| < \varepsilon$.
\item For every partitioned formula $\varphi(x;y) \in \mathcal{L}(M)$ and every $n \in \mathbb{N}_{\geq 1}$ there exist some $\mathcal{L}_y(M)$-formulas $\Phi^{\varphi,\frac{1}{n}}_i(y)$ with $i \in I_n := \{0, \frac{1}{n},   \frac{2}{n}, \ldots, \frac{n-1}{n}, 1\}$ such that:
		\begin{enumerate}
			\item the collection $\{ \Phi^{\varphi, \frac{1}{n}}_i(\cU)  : i \in I_n\}$ forms a covering of $\cU_y$ (but not necessarily a partition);
			\item For every $i \in I_n$ and $b \in \cU_y$, if $\cU \models \Phi^{\varphi,\frac{1}{n}}_i(b)$ then $|\mu(\varphi(x,b)) - i| < \frac{1}{n}$.
		\end{enumerate}
\end{enumerate}
\end{fact}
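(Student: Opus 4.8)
The plan is to prove the cycle of implications $(1) \Rightarrow (3) \Rightarrow (2) \Rightarrow (1)$, passing back and forth between the topological formulation of definability (continuity of the level function $F_{\mu,M}^{\varphi}$ on $S_y(M)$) and the syntactic approximations in (2) and (3), using throughout the duality between open/clopen subsets of the type space $S_y(M)$ and $\cL_y(M)$-formulas.

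For $(1) \Rightarrow (3)$, I would fix $\varphi$ and $n$, write $F := F_{\mu,M}^{\varphi}$, and for each $i \in I_n$ consider the set $U_i := \{q \in S_y(M) : |F(q) - i| < \tfrac{1}{n}\}$, which is open by continuity of $F$; since the grid $I_n$ has mesh $\tfrac{1}{n}$, every value in $[0,1]$ lies within $\tfrac{1}{2n} < \tfrac{1}{n}$ of a grid point, so the $U_i$ cover $S_y(M)$. The difficulty is that each $U_i$ is merely open, hence not the locus of a single formula. To get around this, for each $q$ I would pick $i(q)$ with $|F(q) - i(q)| < \tfrac{1}{n}$ and, using continuity together with the strictness of this inequality, a basic clopen neighbourhood $[\theta_q]$ of $q$ contained in $U_{i(q)}$; compactness of $S_y(M)$ then yields a finite subcover $[\theta_{q_1}], \dots, [\theta_{q_k}]$, and setting $\Phi_i^{\varphi,1/n} := \bigvee_{l \,:\, i(q_l) = i} \theta_{q_l}$ (an empty disjunction being read as a contradiction) produces a covering with each $[\Phi_i^{\varphi,1/n}] \subseteq U_i$, which is exactly (a) together with (b).

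For $(3) \Rightarrow (2)$, given $\varepsilon > 0$ I would choose $n$ with $\tfrac{2}{n} < \varepsilon$ and disjointify the covering $\{\Phi_i^{\varphi,1/n}\}_{i \in I_n}$ into a partition in the standard way, enumerating $I_n$ and replacing each $\Phi_i^{\varphi,1/n}$ by $\Phi_i^{\varphi,1/n} \wedge \bigwedge_{j < i} \neg \Phi_j^{\varphi,1/n}$; every nonempty piece sits inside some original $[\Phi_i^{\varphi,1/n}]$, so any two parameters $c,c'$ in it have $\mu$-values within $\tfrac{2}{n} < \varepsilon$ of each other, giving (2). For $(2) \Rightarrow (1)$, I would first observe that (2) forces $M$-invariance: if $b \equiv_M b'$ they satisfy the same $\cL(M)$-formulas, hence lie in the same partition class, so $|\mu(\varphi(x;b)) - \mu(\varphi(x;b'))| < \varepsilon$ for every $\varepsilon$ and thus the values agree. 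With $F_{\mu}^{\varphi}$ then well-defined, continuity is immediate: given $q_0$ and $\varepsilon$, the unique partition class $[\Phi_{i_0}]$ containing $q_0$ is a clopen neighbourhood on which $F_{\mu}^{\varphi}$ oscillates by less than $\varepsilon$.

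I expect the only genuine obstacle to be the compactness step in $(1) \Rightarrow (3)$, namely the passage from the open (but not clopen) cover $\{U_i\}$ to a finite \emph{definable} covering subordinate to it: the care is in keeping each definable piece inside the correct $U_i$, so that the approximation bound (b) survives, while still retaining the covering property (a). The remaining two implications are routine manipulations within the clopen algebra of $S_y(M)$ and its correspondence with $\cL_y(M)$-formulas, the key point being only that an $\cL_y(M)$-formula partition of $\cU^y$ induces, by saturation, a clopen partition of $S_y(M)$.
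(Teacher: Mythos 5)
Your proof is correct, and it is essentially the standard argument: the paper itself does not prove this fact but cites \cite[Proposition 4.4]{Gannon}, where the equivalence is established by the same route you take --- passing between continuity of the level maps $F_{\mu,M}^{\varphi}$ on the Stone space $S_y(M)$ and finite coverings/partitions by $\cL_y(M)$-formulas, via compactness and the clopen basis. Your handling of the one delicate point (refining the open cover $\{U_i\}$ by clopen sets subordinate to it so that bound (b) is preserved, then disjointifying to pass from (3) to (2)) is exactly right.
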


This easily implies the following.

\begin{fact}\label{fac: unique ext of def meas}
If $M \preceq N \prec \cU$ and $\mu \in \mathfrak{M}_x(N)$ is definable over $M$, then there exists a unique extension $\mu' \in \mathfrak{M}_x(\cU)$ of $\mu$ which is definable over $N$, denoted $\mu|_{\cU}$ (it is then automatically definable over $M$ and given by the same definition schema $\Phi$ for $\mu$ as in Fact \ref{fac: chars of def meas}).
\end{fact}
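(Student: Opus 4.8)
The plan is to define $\mu' = \mu|_{\cU}$ pointwise through the $M$-definition schema $\Phi$ provided by Fact \ref{fac: chars of def meas}, and then verify that this recipe yields a finitely additive measure which is $N$-definable and restricts to $\mu$. Fix a partitioned formula $\varphi(x;y) \in \cL(M)$. Since $\mu$ is definable over $M$, clause (3) of Fact \ref{fac: chars of def meas} (read with $N$ in place of $\cU$, as is appropriate for a measure over $N$) gives, for each $n$, formulas $\Phi^{\varphi,1/n}_i(y) \in \cL_y(M)$, $i \in I_n$, whose traces cover $N^y$ and which pin $\mu(\varphi(x,b))$ to within $1/n$ of $i$ whenever $b \in N^y$ and $\models \Phi^{\varphi,1/n}_i(b)$. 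First I would observe that the covering statement $\forall y \bigvee_{i \in I_n} \Phi^{\varphi,1/n}_i(y)$ is an $\cL(M)$-sentence true in $N$, hence true in $\cU$ by elementarity, so for every $b \in \cU^y$ the set $J_n(b) := \{ i \in I_n : \ \models \Phi^{\varphi,1/n}_i(b)\}$ is nonempty. For such $b$ I set
\[
\mu'(\varphi(x,b)) := \text{the unique point of } \bigcap_{n} \bigcap_{i \in J_n(b)} \left[\, i - 1/n,\ i + 1/n\,\right].
\]
Existence of this point follows from the finite intersection property plus compactness of $[0,1]$: for any finitely many constraints, the $\cL(M)$-sentence asserting the existence of a single $y$ satisfying the corresponding $\Phi$'s holds in $\cU$ (witnessed by $b$), hence in $N$, and the value $\mu(\varphi(x,b^\ast))$ at a witness $b^\ast \in N^y$ lies in all those intervals. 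Uniqueness is immediate since the interval lengths $2/n \to 0$, and for $b \in N^y$ the value $\mu(\varphi(x,b))$ itself lies in every interval, so $\mu'$ extends $\mu$.

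The main work — and the step I expect to be the principal obstacle — is checking that $\mu'$ is a well-defined finitely additive probability measure on $\cL_x(\cU)$, i.e.\ that its value depends only on the definable set $\varphi(x,b)$ and respects disjoint unions. The strategy is a transfer argument: any instance of additivity (or of well-definedness, or normalization) concerns finitely many parameters $b = (b_1,\dots,b_k)$ together with the Boolean relations they induce among the $\varphi_j(x,b_j)$. I would encode those relations (e.g.\ $\forall x\,\neg(\varphi_1(x,y_1)\wedge\varphi_2(x,y_2))$ and $\forall x(\varphi_3(x,y_3)\leftrightarrow\varphi_1(x,y_1)\vee\varphi_2(x,y_2))$) together with the relevant $\Phi^{\varphi_j,1/n}_{i_j}(y_j)$ into a single $\cL(M)$-formula $\theta(y)$ satisfied by $b$. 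By elementarity $\theta$ is satisfied by some $b^\ast \in N^y$; there $\mu$ is a genuine finitely additive measure, so the desired linear relation holds exactly among the $\mu(\varphi_j(x,b^\ast_j))$, each of which is within $1/n$ of $i_j$, as is each $\mu'(\varphi_j(x,b_j))$. Letting $n \to \infty$ transfers the relation to the values $\mu'(\varphi_j(x,b_j))$. The same device (encoding $\forall x(\varphi_1(x,y_1)\leftrightarrow\varphi_2(x,y_2))$) gives well-definedness, and handling $x=x$ gives $\mu'(\top)=1$, so $\mu' \in \mathfrak{M}_{x}(\cU)$.

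Finally I would read off definability and uniqueness. By construction the same schema $\Phi$ (with parameters in $M$) witnesses clause (3) of Fact \ref{fac: chars of def meas} for $\mu'$, so $\mu'$ is definable over $M$, hence over $N$: any $\cL(N)$-instance $\varphi(x;y)=\varphi_0(x;y,c)$ with $c \in N$ reduces to the continuous $F^{\varphi_0}_{\mu',M}$ precomposed with the continuous map $\tp(b/N)\mapsto\tp(bc/M)$, and $N$-invariance holds since $b\equiv_N b'$ forces $bc\equiv_M b'c$. For uniqueness, suppose $\mu''$ is any $N$-definable extension of $\mu$. For fixed $\varphi(x;y)\in\cL(N)$ the maps $F^{\varphi}_{\mu'},F^{\varphi}_{\mu''}:S_y(N)\to[0,1]$ are continuous and agree on every realized type $\tp(b/N)$, $b \in N^y$, since there both equal $\mu(\varphi(x,b))$; as realized types are dense in $S_y(N)$, the two continuous functions coincide, whence $\mu'=\mu''$. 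This also confirms that $\mu'$ is given by the schema $\Phi$, as claimed.
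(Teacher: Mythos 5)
Your proposal is correct, and it fills in exactly the argument the paper intends: the paper states this fact without proof as an easy consequence of Fact \ref{fac: chars of def meas}, with the parenthetical remark indicating that $\mu|_{\cU}$ is given by the same definition schema $\Phi$ --- which is precisely your construction (define the extension via the schema, verify the measure axioms by elementarity/transfer to $N$, and get uniqueness from continuity of the fiber maps together with density of realized types in $S_y(N)$). No gaps; the only cosmetic point is the strict-versus-nonstrict inequality $|\mu'(\varphi(x,b))-i|\leq 1/n$ when re-verifying clause (3) for $\mu'$, which is resolved by passing to the schema at a finer scale.
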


In an NIP theory, every measure over a small model can be extended to a smooth measure over a slightly larger elementary extension (\cite[Theorem 3.16]{Keisl}, or \cite[Proposition 7.9]{Guide}).
\begin{fact}\label{fac: NIP ext to smooth}
Let $T$ be an NIP theory. Let $M \prec \mathcal{U}$ and $\mu \in \mathfrak{M}_{x}(M)$. Then $\mu$ admits a smooth extension. I.e., there exist some $\nu \in \mathfrak{M}_{x}(\mathcal{U})$ and some small $M \preceq N \prec \cU$ such that $\nu$ is smooth over $N$ and $\nu|_{M} = \mu$. 
\end{fact}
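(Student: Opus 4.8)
The plan is to convert smoothness into a ``sandwiching'' criterion and then to enforce that criterion formula-by-formula by a transfinite construction, with NIP supplying the only nontrivial ingredient, namely that the construction halts after boundedly many steps.

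First I would record the criterion I intend to verify. For $N \succeq M$ and $\nu \in \mathfrak{M}_x(N)$, I claim $\nu$ is smooth over $N$ exactly when every instance is squeezable by $N$-definable sets of almost-equal measure: for each partitioned $\varphi(x;y) \in \mathcal{L}(N)$, each $b \in \cU^y$, and each $\varepsilon > 0$ there are $\psi^-(x), \psi^+(x) \in \mathcal{L}_x(N)$ with $\psi^-(x) \vdash \varphi(x;b) \vdash \psi^+(x)$ and $\nu(\psi^+) - \nu(\psi^-) < \varepsilon$. The easy direction is that any extension of $\nu$ to $\cU$ must assign $\varphi(x;b)$ a value in the interval $[\nu(\psi^-),\nu(\psi^+)]$, so the sandwich forces all extensions to agree; the converse, that a persistent gap lets one build two distinct extensions, is a standard Hahn--Banach / compactness argument in the space $\mathfrak{M}_x(\cU)$. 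Thus it suffices to produce a small $N \succeq M$ and $\nu \in \mathfrak{M}_x(\cU)$ with $\nu|_M = \mu$ realizing this sandwich for all $\varphi$ and all $\varepsilon$.

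Next I would build $\nu$ by a continuous increasing chain $M = M_0 \preceq M_1 \preceq \cdots$ together with compatible measures $\mu_\alpha \in \mathfrak{M}_x(M_\alpha)$ extending $\mu$, treating the pairs $(\varphi, 1/k)$ one at a time. At a successor stage, if some instance $\varphi(x;b)$ is not $(1/k)$-sandwiched by the data available over $M_\alpha$, I adjoin $b$ together with parameters witnessing the best available inner/outer approximations to form $M_{\alpha+1}$, and extend $\mu_\alpha$ to $\mu_{\alpha+1}$ pinning down $\mu_{\alpha+1}(\varphi(x;b))$; this strictly shrinks a nonnegative, bounded quantity measuring the residual gap for $\varphi$. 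At limit stages I take unions of the models and a limit point of the $\mu_\alpha$ along the chain, which exists because each $\mathfrak{M}_x(M_\beta)$ is compact Hausdorff and the restriction maps are continuous. Throughout I may keep the global invariant extensions Borel-definable using Fact \ref{fac: props of measures relation}(2a), which is what makes the level functions, and hence the approximation data, well-behaved.

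The main obstacle---and the sole place NIP enters---is showing that for each fixed $(\varphi, 1/k)$ the successor stages terminate after a number of steps bounded independently of the parameters, so that the resulting $N$ is small. This is where I would use that $\varphi$ has finite VC dimension: an infinite sequence of genuine $(1/k)$-improvements would produce instances $\varphi(x;b_i)$ each carrying at least $1/k$ of the mass in a region not yet accounted for by the earlier sandwiches, and organizing these ``fresh'' regions into an essentially disjoint family forces the total mass above $1$ unless the sequence is short---but setting up the near-disjointness, i.e.\ ruling out that the $\varphi(x;b_i)$ interleave arbitrarily, is precisely an appeal to bounded VC dimension (via the $\varepsilon$-approximation, or uniform Vapnik--Chervonenkis, phenomenon for NIP formulas). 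This yields a bound $N(\varphi, 1/k)$ on the number of improvement steps depending only on $\varepsilon$ and the VC dimension, not on $b$ or on $\mu$, so each phase halts with a small model. Running through all boundedly many pairs $(\varphi, 1/k)$ in a single continuous chain then gives a small $N \succeq M$ and $\nu \in \mathfrak{M}_x(\cU)$ with $\nu|_M = \mu$ satisfying the sandwich criterion for every $\varphi$ and $\varepsilon$; by the first paragraph, $\nu$ is smooth over $N$.
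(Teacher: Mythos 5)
Your first paragraph is fine: smoothness of $\nu$ over $N$ is indeed equivalent to the sandwich criterion, by exactly the two arguments you sketch, and the persistence of sandwiches under extension of the model and the measure also makes your phase-by-phase architecture legitimate. The genuine gap is in the third paragraph, which is where the whole content of the theorem lives: the claim that for a fixed pair $(\varphi, 1/k)$ the number of genuine improvement steps is bounded by some $N(\varphi,1/k)$ \emph{independently of how the extensions are chosen} is false, and NIP cannot rescue it. Concretely, let $T$ be DLO, $M=(\mathbb{Q},<)$, $\mu=\delta_{p_0}$ with $p_0$ the cut at $\sqrt2$, and $\varphi(x;y):= x<y$. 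Any $b\models p_0$ gives an instance $x<b$ with inner measure $0$ and outer measure $1$. If you ``pin down'' its value by extending to $\delta_{p_1}$, where $p_1$ is the cut immediately below $b$, that one instance is resolved, but over the new model the instance $x<b'$ for $b'\models p_1$ again has gap $1$; iterating with Dirac measures at cuts yields a chain of arbitrary ordinal length in which \emph{every} successor stage is a genuine $\tfrac12$-improvement step, yet the process never halts (even though $x<y$ has VC dimension $1$). There is no contradiction with VC theory here, because the successive ``fresh regions'' are nested rather than disjoint, and the instances $x<b_i$ have pairwise symmetric differences of measure $0$ under all later measures of the chain; so no packing-type bound applies to them. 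Nothing in your construction forces the fresh regions to be essentially disjoint, or even separated in measure, and that is precisely the hypothesis your appeal to the VC/$\varepsilon$-approximation phenomenon would need.

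The missing idea, which is the heart of the proof cited by the paper (Keisler; \cite[Proposition 7.9]{Guide}), is that the extension at a successor stage must be chosen in a specific way: given an instance $\varphi(x,b)$ whose gap over $M_\alpha$ is $\geq\varepsilon$, take two extensions $\lambda^1,\lambda^2$ of $\mu_\alpha$ with $\lambda^1(\varphi(x,b))-\lambda^2(\varphi(x,b))\geq\varepsilon$ and set $\mu_{\alpha+1}:=\tfrac12(\lambda^1+\lambda^2)$. Since $\lambda^1(\psi)=\lambda^2(\psi)=\mu_\alpha(\psi)$ for $\psi(x)\in\mathcal{L}_x(M_\alpha)$, one computes $\mu_{\alpha+1}\left(\varphi(x,b)\,\triangle\,\psi(x)\right)\geq\tfrac12\left(|\lambda^1(\varphi(x,b))-\mu_\alpha(\psi)|+|\lambda^2(\varphi(x,b))-\mu_\alpha(\psi)|\right)\geq\varepsilon/2$ for \emph{every} such $\psi$, and this separation persists in all later measures of the chain. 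Consequently, if the phase for $(\varphi,\varepsilon)$ ran for infinitely many steps, the chosen instances $\varphi(x,b_i)$ would be pairwise $\varepsilon/2$-separated in $L^1$ with respect to a single limit measure, and it is \emph{this} separation --- not disjointness or a mass count --- that contradicts NIP, via the Sauer--Shelah/Haussler packing bound for set systems of finite VC dimension (equivalently, one extracts IP from an infinite separated family). With averaged extensions and the packing lemma in place, your sandwich criterion and chain architecture do assemble into a correct proof; as written, the termination step fails.
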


\begin{definition}
	Given a Keisler measure $\mu \in \mathfrak{M}_x(A)$, the \emph{support of $\mu$} is 
\begin{equation*}
    \supp(\mu) = \{ p \in S_{x}(A): \mu(\varphi(x)) > 0 \text{ for any } \varphi(x) \in p\}. 
\end{equation*}
Types in $\supp(\mu)$ are sometimes called \emph{weakly random} with respect to $\mu$ in the literature.
\end{definition}
We recall some properties of supports, with  proofs for the sake of completeness.
\begin{proposition}\label{prop: supp 1} Let $\mu \in \mA$. 
\begin{enumerate}
	\item Then for any  $\varphi(x) \in \mathcal{L}_{x}(A)$ such that $\mu(\varphi(x)) > 0$, there exists some $q \in \supp(\mu)$ such that $\varphi(x) \in q$. In particular, $\supp(\mu) \neq \emptyset$. 
	\item $\supp(\mu)$ is a closed subset of $S_{x}(A)$ and $\mu(\supp(\mu)) = 1$ (and $S(\mu)$ is the smallest set of types under inclusion with this property).
\end{enumerate}

\end{proposition}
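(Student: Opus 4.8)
The plan is to pass through the standard correspondence (recalled in the excerpt) between $\mu$ and the associated regular Borel probability measure on the Stone space $S_x(A)$, under which the clopen sets $[\varphi] := \{p \in S_x(A) : \varphi(x) \in p\}$ form a basis and $\mu([\varphi]) = \mu(\varphi(x))$. Under this correspondence a type $p$ lies in $\supp(\mu)$ exactly when every basic clopen neighborhood of $p$ has positive measure, so $\supp(\mu)$ is precisely the topological support of the Borel measure. The one algebraic fact I will use repeatedly is that the collection $Z := \{\psi(x) \in \mathcal{L}_x(A) : \mu(\psi(x)) = 0\}$ is an ideal in the Boolean algebra $\mathcal{L}_x(A)$: finite additivity of $\mu$ yields finite subadditivity, so $Z$ is closed under finite joins, and it is clearly downward closed.

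For part (1), I would argue by contradiction via compactness. Suppose $\mu(\varphi(x)) > 0$ but no $q \in \supp(\mu)$ contains $\varphi(x)$; then every $q \in [\varphi]$ contains some $\psi_q \in q$ with $\mu(\psi_q) = 0$. The clopen sets $\{[\psi_q] : q \in [\varphi]\}$ cover the closed, hence compact, set $[\varphi]$, so finitely many suffice: $[\varphi] \subseteq [\psi_{q_1}] \cup \cdots \cup [\psi_{q_n}]$. Translating back to definable sets this reads $\varphi(\cU) \subseteq \psi_{q_1}(\cU) \cup \cdots \cup \psi_{q_n}(\cU)$, whence $\mu(\varphi(x)) \leq \sum_{i \leq n} \mu(\psi_{q_i}(x)) = 0$, a contradiction. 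Nonemptiness of $\supp(\mu)$ then follows by applying this to the formula $x = x$, which has measure $1$.

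For part (2), closedness is immediate, since the complement $S_x(A) \setminus \supp(\mu) = \bigcup_{\psi \in Z} [\psi]$ is a union of clopen sets and hence open. To get $\mu(\supp(\mu)) = 1$ it suffices to show the open set $U := \bigcup_{\psi \in Z}[\psi]$ is null, and this is the only genuine obstacle: finite additivity alone does not apply, as $U$ may be an uncountable union of null clopen sets. Here I would invoke inner regularity of the associated Borel measure, $\mu(U) = \sup\{\mu(K) : K \subseteq U \text{ compact}\}$, and note that any compact $K \subseteq U$ admits a finite subcover $K \subseteq [\psi_1] \cup \cdots \cup [\psi_m]$ with each $\psi_i \in Z$, so $\mu(K) \leq \sum_{i \leq m}\mu(\psi_i) = 0$; thus $\mu(U) = 0$ and $\mu(\supp(\mu)) = 1 - \mu(U) = 1$. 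Finally, for minimality, let $C \subseteq S_x(A)$ be any closed set with $\mu(C) = 1$ and take $p \notin C$. Since the clopen sets form a basis and $S_x(A) \setminus C$ is an open neighborhood of $p$, there is $\psi \in p$ with $[\psi] \cap C = \emptyset$, so $\mu(\psi) = \mu([\psi]) \leq \mu(S_x(A) \setminus C) = 0$ and hence $p \notin \supp(\mu)$. This shows $\supp(\mu) \subseteq C$, making $\supp(\mu)$ the smallest closed set of full measure.
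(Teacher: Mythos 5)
Your proof is correct and takes essentially the same approach as the paper's: part (1) is the same compactness argument (you cover the clopen set $[\varphi]$ by null clopens and extract a finite subcover, where the paper dually applies the finite intersection property to the measure-one complements after first normalizing to $\varphi \equiv x{=}x$), and part (2) rests, as in the paper, on regularity of the associated Borel measure combined with compactness of $S_x(A)$. The only addition is that you explicitly verify the parenthetical minimality claim, which the paper's proof leaves unstated.
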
 
\begin{proof}

(1) Without loss of generality, $\varphi(x) \equiv x=x$. Otherwise, we reiterate the proof with the normalization of $\mu$ to the definable set $\varphi(x)$, i.e.~considering the Keisler measure $\mu'$ defined by $\mu'(\psi(x)) := \frac{\mu(\psi(x) \land \varphi(x))}{\mu(\varphi(x))}$ for all $\psi(x)$. Assume that $\supp(\mu) = \emptyset$, then for every type $p \in S_{x}(A)$, there exists some $\varphi_{p}(x) \in p$ such that $\mu(\varphi_{p}(x)) = 0$. Then, $\mu(\neg \varphi_{p}(x)) =1$ for every $p \in S_{x}(A)$, hence  for any $n$ and $p_1,...,p_n \in S_{x}(A)$, we have $\bigcap_{i=1}^{n} \neg \varphi_{p_i}(x) \neq \emptyset$.  Then $K = \bigcap_{p \in S_{x}(A)} \neg \varphi_{p}(x) \neq \emptyset$ by compactness of $S_x(A)$. But if $q \in K$, then in particular $\neg \varphi_{q}(x) \in q$ --- a contradiction.

(2) Assume that $p \not \in \supp(\mu)$. Then, there exists a formula $\varphi_{p}(x)$ such that $\varphi_{p}(x) \in p$ and $\mu(\varphi_{p}(x)) =0$. Then,
\begin{equation*} 
S_x(A) \backslash \supp(\mu) = \bigcup_{p \not \in  \supp(\mu)} \varphi_{p}(x).
\end{equation*}
Therefore, $\supp(\mu)$ is closed. Assume that $\mu(S_{x}(M) \backslash \supp(\mu)) > 0$. By regularity of $\mu$, there exists a clopen $C \subseteq S_{x}(A) \backslash \supp(\mu)$ with positive measure. But by (1) we must have $C \cap \supp(\mu) \neq \emptyset$, a contradiction. 
\end{proof}

\begin{proposition}\label{coheir} Let $A \subseteq B \subseteq \cU$ and $\mu \in \mathfrak{M}_x(B)$ be arbitrary. Let $r: S_{x}(B) \to S_{x}(A),  q \mapsto q|_{A}$ be the restriction map. Then:
\begin{enumerate}
\item $r(\supp(\mu)) = \supp(\mu|_{A})$;
	\item the measure $\mu|_{A}$ is the pushforward of $\mu$ along $r$, i.e.~$r^{*}(\mu) = \mu|_{A}$.
\end{enumerate}
\end{proposition}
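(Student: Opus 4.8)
The plan is to establish (2) first and then use it, together with the compactness of the support, for (1).

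For (2), recall (from the correspondence between Keisler measures and regular Borel probability measures noted in the preliminaries) that both $\mu|_{A}$ and the pushforward $r^{*}(\mu)$ are regular Borel probability measures on the Stone space $S_{x}(A)$, and such a measure is determined by its values on clopen sets. So it suffices to check that $\mu|_{A}$ and $r^{*}(\mu)$ agree on every basic clopen set $[\varphi] := \{p \in S_{x}(A) : \varphi(x) \in p\}$ with $\varphi(x) \in \mathcal{L}_{x}(A)$. On the one hand $\mu|_{A}([\varphi]) = \mu(\varphi(x))$ by the definition of the restriction. On the other hand, since $\varphi(x) \in \mathcal{L}_{x}(A) \subseteq \mathcal{L}_{x}(B)$ and membership of an $A$-formula is unaffected by restricting a type to $A$, we have $r^{-1}([\varphi]) = \{q \in S_{x}(B) : \varphi(x) \in q\}$, the corresponding clopen subset of $S_{x}(B)$; hence $r^{*}(\mu)([\varphi]) = \mu(r^{-1}([\varphi])) = \mu(\varphi(x))$. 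The two measures thus agree on all clopens, so $r^{*}(\mu) = \mu|_{A}$.

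For (1), the inclusion $r(\supp(\mu)) \subseteq \supp(\mu|_{A})$ is immediate from the definitions: if $p \in \supp(\mu)$ and $\varphi(x) \in p|_{A}$ then $\varphi(x) \in p$, so $\mu|_{A}(\varphi(x)) = \mu(\varphi(x)) > 0$; as this holds for every $\varphi(x) \in p|_{A}$, we get $p|_{A} \in \supp(\mu|_{A})$. For the reverse inclusion I would argue by contradiction, using that $r$ is continuous and $\supp(\mu)$ is closed (Proposition \ref{prop: supp 1}(2)), so that $r(\supp(\mu))$ is a compact, hence closed, subset of $S_{x}(A)$. Suppose some $q \in \supp(\mu|_{A})$ lies outside $r(\supp(\mu))$. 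Since $S_{x}(A)$ is a Stone space, we may separate $q$ from the closed set $r(\supp(\mu))$ by a clopen set $[\psi]$, $\psi(x) \in \mathcal{L}_{x}(A)$, with $q \in [\psi]$ and $[\psi] \cap r(\supp(\mu)) = \emptyset$. Then $r^{-1}([\psi])$ is disjoint from $\supp(\mu)$, so, using $\mu(\supp(\mu)) = 1$ from Proposition \ref{prop: supp 1}(2), we get $\mu(\psi(x)) = \mu(r^{-1}([\psi])) = 0$; by the definition of the restriction $\mu|_{A}(\psi(x)) = 0$, contradicting $q \in \supp(\mu|_{A})$ together with $\psi(x) \in q$.

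The only genuinely nontrivial point is the reverse inclusion in (1): it amounts to producing a type over $B$ lying in $\supp(\mu)$ above a prescribed $q \in \supp(\mu|_{A})$, which is exactly what the compactness of $\supp(\mu)$ and the concentration of $\mu$ on it supply through the separation argument above. Alternatively, one could deduce (1) directly from (2) by invoking the general fact that, for a continuous map of compact spaces, the support of a pushforward measure equals the closure of the image of the support; since $r(\supp(\mu))$ is already closed, this collapses to the desired equality.
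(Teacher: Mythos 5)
Your proof is correct and follows essentially the same route as the paper: both rest on $r(\supp(\mu))$ being closed (continuous image of a compact set) together with Proposition \ref{prop: supp 1}, and your separation-by-contradiction step for the reverse inclusion is just the contrapositive of the paper's density argument (a basic clopen meeting $\supp(\mu|_A)$ must meet $r(\supp(\mu))$). The paper dismisses (2) as clear, so your extra detail there, and the remark that (1) could be deduced from (2) via the general pushforward-support fact, is harmless elaboration.
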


\begin{proof} (1) The map $r$ is a continuous surjection between compact Hausdorff spaces. By Proposition \ref{prop: supp 1}(2), $r(\supp(\mu))$ is compact (hence, closed), as the continuous image of a compact set. Clearly  $r(\supp(\mu)) \subseteq \supp(\mu|_A)$, and as $r(\supp(\mu))$ is closed it suffices to show that $r(\supp(\mu))$ is a dense subset of $\supp(\mu|_A)$. Indeed, assume that $\varphi(x) \in \mathcal{L}_{x}(A)$ and $\varphi(x) \cap \supp(\mu|_{A}) \neq \emptyset$. Then $\mu|_{A}(\varphi(x)) > 0$, hence $\mu(\varphi(x)) > 0$, and by Proposition \ref{prop: supp 1}(1) there exists some $q \in \supp(\mu)$ with  $\varphi(x) \in q$. Hence $\varphi(x) \in r(q)$, and so $r(\supp(\mu)) \cap \varphi(x) \neq \emptyset$. And (2) is clear.
\end{proof}

\begin{definition}\label{def: inv sup meas}
	We say that $\mu \in \mathfrak{M}_x(\cU)$ is \emph{invariantly supported} if there exists some small model $M \prec \cU$ such that every type $p \in \supp(\mu)$ is $M$-invariant.
\end{definition}
\begin{lemma} \label{lem: sup on inv types} Let $\mu \in \mathfrak{M}_x(\cU)$.
	\begin{enumerate}
	\item If $\mu$ is finitely satisfiable, then $\mu$ is invariantly supported.
		\item If $\mu $ is invariantly supported, then $\mu$ is invariant.
		\item If $T$ is NIP, then $\mu$ is invariant if and only if it is invariantly supported.
		\item In some theory, there exist a definable measure $\mu \in \mathfrak{M}_x(\cU)$ and $p \in \supp(\mu)$ such that $p$ is not invariant (over any small set).
		\end{enumerate}
	\end{lemma}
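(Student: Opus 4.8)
The plan is to treat the four parts essentially independently, with (1), (2) and the backward direction of (3) being short and direct, while the forward direction of (3) and the example in (4) carry the real content.

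For (1), I would prove the stronger statement that every $p \in \supp(\mu)$ is itself finitely satisfiable in $M$ (and finitely satisfiable global types are $M$-invariant, being coheirs, by Fact \ref{fac: props of measures relation}(1)(d)). Indeed, if $p \in \supp(\mu)$ and $\varphi(x) \in p$, then $\mu(\varphi(x)) > 0$ by definition of the support, so finite satisfiability of $\mu$ in $M$ yields some $a \in M^x$ with $\cU \models \varphi(a)$; since $\varphi(x) \in p$ was arbitrary, $p$ is finitely satisfiable in $M$. For (2), suppose every $p \in \supp(\mu)$ is $M$-invariant. Given a partitioned $\cL(M)$-formula $\varphi(x;y)$ and $b \equiv_M b'$, choose $\sigma \in \Aut(\cU/M)$ with $\sigma(b) = b'$; then for every $M$-invariant $p \in \supp(\mu)$ we have $\varphi(x;b) \in p \iff \varphi(x;b') \in p$. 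Hence the definable sets $\varphi(x;b)$ and $\varphi(x;b')$ meet $\supp(\mu)$ in the same set of types, and since $\mu(\supp(\mu)) = 1$ by Proposition \ref{prop: supp 1}(2) we conclude $\mu(\varphi(x;b)) = \mu(\varphi(x;b'))$, so $\mu$ is $M$-invariant.

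Part (3): the backward direction is exactly (2), so the work is to show, under NIP, that an $M$-invariant $\mu$ is invariantly supported. I would argue by contradiction, assuming some $p \in \supp(\mu)$ is not $M$-invariant. The first step is to extract an $M$-indiscernible sequence $(b_i)_{i<\omega}$ and a formula $\varphi(x;y) \in \cL(M)$ with $\varphi(x;b_0) \in p$ and $\neg\varphi(x;b_1) \in p$, a form of the characterization of invariant global types by indiscernible sequences. Setting $W_i := \varphi(x;b_i) \wedge \neg\varphi(x;b_{i+1})$, indiscernibility makes every consecutive pair $(b_i,b_{i+1})$ conjugate to $(b_0,b_1)$ over $M$, so $M$-invariance of $\mu$ gives $\mu(W_i) = \mu(W_0)$ for all $i$; and $W_0 \in p$ with $p \in \supp(\mu)$ gives $\mu(W_0) = \varepsilon > 0$. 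The punchline is NIP: for each fixed $a$, the relation $a \in W_i$ records a ``downward crossing'' of the $\{0,1\}$-sequence $\bigl(\chi[\cU \models \varphi(a;b_i)]\bigr)_i$, which alternates at most $\operatorname{alt}(\varphi) < \omega$ times along the indiscernible sequence. Thus $\sum_i \chi_{W_i} \leq \operatorname{alt}(\varphi)$ pointwise, whence $n\varepsilon \leq \sum_{i<n}\mu(W_i) = \int \sum_{i<n}\chi_{W_i}\, d\mu \leq \operatorname{alt}(\varphi)$ for all $n$, a contradiction. I expect the main obstacle to be precisely the first step: arranging a genuinely $M$-indiscernible sequence that still witnesses the splitting of $p$ on adjacent terms (so that $W_0 \in p$), since a naive Ramsey extraction from the conjugates furnished by non-invariance need not preserve the splitting. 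This is where the argument really lives and where one uses that $p$ is a complete global type; it is also the point that forces the NIP hypothesis, as part (4) shows the statement fails otherwise.

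Finally, for (4) I would exhibit a concrete example, which by part (3) must live in an IP theory. Take $T$ to be the theory of the random graph and let $\mu \in \mathfrak{M}_x(\cU)$ be the ``coin-flipping'' measure, under which the events $\{x E a\}$ for distinct $a$ are independent of probability $\tfrac12$. For a formula $\varphi(x;b)$, quantifier elimination writes it as a Boolean combination of the edge-conditions $x E b_j$ and equalities $x = b_j$, and $\mu(\varphi(x;b))$ is the corresponding dyadic probability; this value depends only on the quantifier-free type of $b$, so each level function $F^\varphi_\mu$ is locally constant, hence continuous, and $\mu$ is definable. Since any consistent finite pattern $\bigwedge_j (x E a_j)^{\pm} \wedge \bigwedge_k (x \neq c_k)$ has positive measure, every non-realized complete type lies in $\supp(\mu)$. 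It then suffices to take any non-realized $p \in S_x(\cU)$ that decides $x E a$ for some $a \notin M$ differently from its decision on some $M$-conjugate $a'$ of $a$: such $p$ splits over $M$, and as $M$ was arbitrary, $p$ is not invariant over any small set while still belonging to $\supp(\mu)$.
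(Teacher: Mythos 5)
Your parts (1) and (2) are correct and essentially the paper's own arguments: (1) is the observation that every type in $\supp(\mu)$ inherits finite satisfiability from $\mu$, and (2) is the paper's symmetric-difference argument in contrapositive form; both rest on Proposition \ref{prop: supp 1}. For (3)($\Rightarrow$) you take a genuinely different route from the paper. The paper shows that every $p \in \supp(\mu)$ does not \emph{divide} over $M$ (for any $M$-indiscernible $(b_i)$ of $M$-conjugates of $b$ with $\varphi(x,b) \in p$, invariance of $\mu$ gives $\mu(\varphi(x,b_i)) = \varepsilon > 0$ for all $i$, and the standard probability lemma makes the finite conjunctions have positive measure, hence be consistent), and then cites the NIP fact that a global type which does not divide over a model is invariant over it. You instead inline an alternation-number contradiction. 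Your counting step is fine, but the step you yourself flag --- producing an $M$-indiscernible sequence $(b_i)$ with $\varphi(x;b_0) \wedge \neg\varphi(x;b_1) \in p$ from mere non-invariance of $p$ --- is left unproven, so as written the proposal is not a proof; this is the one genuine gap. It is fillable, and without NIP: since $M$ is a model, take a global coheir $q$ extending $\tp(b/M)$ and $b'' \models q|_{Mbb'}$; each of the pairs $(b,b'')$ and $(b',b'')$ begins a Morley sequence of the $M$-invariant type $q$, hence an $M$-indiscernible sequence, and since $\varphi(x,b) \in p$ and $\neg\varphi(x,b') \in p$, the type $p$ must split on one of these two pairs, giving the desired configuration. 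Consequently your aside that this extraction step is ``the point that forces the NIP hypothesis'' is wrong: the extraction is valid in any theory, and NIP enters only in the alternation bound (exactly where it enters in the paper, inside the cited fact). Note that the paper's reduction to non-dividing sidesteps the extraction issue entirely, because dividing is tested against \emph{arbitrary} indiscernible sequences of conjugates, which exist for free.

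For (4) you use the same example as the paper (the coin-flipping measure on the random graph), and your verification of definability and of $\supp(\mu)$ being the set of non-realized types is correct. But your final step has the quantifiers tangled: you fix $M$, choose $p$ splitting over that $M$, and then declare ``as $M$ was arbitrary.'' The statement requires a \emph{single} non-realized $p$ that is non-invariant over \emph{every} small set simultaneously; this does not follow from ``for each $M$ there is some $p$ splitting over $M$,'' and its existence is exactly what the paper's transfinite induction (building the type so as to defeat invariance over each candidate base in turn) is for. This is a smaller defect than the one in (3), but as written the conclusion of (4) does not follow.
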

\begin{proof}
(1) Clearly if $\mu$ is finitely satisfiable over a small model $M \prec \cU$, then every $p \in \supp(\mu)$ is also finitely satisfiable in $M$.

	(2) Let $M \prec \cU$ be a small model such that every $p \in \supp(\mu)$ is invariant over $M$. If $\mu$ is not invariant over $M$, then there exist some $\varphi(x,y) \in \mathcal{L}_{xy}$ and some $b \equiv_M b'$ in $\cU^y$ such that $\mu(\varphi(x,b)) \neq \mu(\varphi(x,b'))$. But then $\mu(\varphi(x,b) \triangle \varphi(x,b')) > 0$, hence $\varphi(x,b) \triangle \varphi(x,b') \in p$ for some $p \in \supp(\mu)$ by Proposition \ref{prop: supp 1} --- contradicting $M$-invariance of $p$.
	
	(3) $(\Leftarrow)$ holds by (2). For $(\Rightarrow)$, we note that if $\mu$ is invariant over $M \prec \cU$, then every global type $p \in \supp(\mu)$ does not divide over $M$ (given $\varphi(x,b) \in p$ and an $M$-indiscernible sequence $(b_i)_{i \in \omega}$ in $\cU^y$ such that $b_i \equiv_M b$, we have that $\mu(\varphi(x,b_i)) = \mu(\varphi(x,b)) =: \varepsilon > 0$ for all $i$; but then $\mu(\bigwedge_{i < k} \varphi(x,b_i)) > 0$ for every $k\in \omega$, so in particular $\neq \emptyset$, by a standard probability lemma, see e.g. \cite[Lemma 7.5]{Guide}), hence $p$ is invariant over $M$ by \cite[Proposition 2.1(ii)]{NIP2}.
	
	(4) Let $T$ be the theory of the random graph, in a language with a single binary relation. We let $\mu 
	\left(\bigwedge_{i<k} E(x,b_i)^{t_i} \right) = \frac{1}{2^k}$ for every $k \in \omega$, pairwise distinct $b_i \in \cU$ and $t_i \in \{0,1 \}$, and $\mu(x=b) = 0$ for every $b \in \cU$. By quantifier elimination,  this determines a measure  $\mu \in \mathfrak{M}_x(\cU)$. This $\mu$ is clearly definable over $\emptyset$, and the support of $\mu$ consists of all non-realized types in $S_x(\cU)$. However, it is easy to construct by transfinite induction a non-realized type $p \in S_x(\cU)$ which is not invariant over any small $M \prec \cU$.
\end{proof}
The space of measures $\mathfrak{M}_x(\cU)$ can be naturally viewed as a closed convex subset of a real topological vector space (of all bounded real-valued measures). Given $M \prec \cU$, we identify $M^x$ with the set $\{\delta_a : a \in M^x \} \subseteq \mathfrak{M}_x(\cU)$, and let $\conv(M^x)$ denote the convex hull. We have the following topological characterization of finite satisfiability for measures. 

\begin{proposition}\label{convfs} Let $\mu \in \mathfrak{M}_{x}(\mathcal{U})$ and let $M \prec \cU$ be a small model. Then $\mu$ is finitely satisfiable in $M$ if and only if $\mu$ is in the closure of $\conv(M^{x})$ (viewed as a subset of $\mathfrak{M}_{x}(\mathcal{U}))$.
\end{proposition}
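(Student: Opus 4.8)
The plan is to prove the two implications separately, using the description of the topology on $\mathfrak{M}_x(\mathcal{U})$ as the one induced from $[0,1]^{\mathcal{L}_x(\mathcal{U})}$, so that a basic open neighborhood of $\mu$ is specified by finitely many formulas $\varphi_1(x), \ldots, \varphi_n(x) \in \mathcal{L}_x(\mathcal{U})$ and some $\varepsilon > 0$, namely $\{\nu : |\nu(\varphi_k) - \mu(\varphi_k)| < \varepsilon \text{ for all } k \le n\}$.

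For the direction ($\Leftarrow$), I would first observe that for each $a \in M^x$ the Dirac measure $\delta_a$ is finitely satisfiable in $M$: if $\delta_a(\varphi(x)) > 0$ then $\models \varphi(a)$ with $a \in M^x$. Consequently every finite convex combination $\sum_i r_i \delta_{a_i}$ with $a_i \in M^x$ is finitely satisfiable in $M$, since $(\sum_i r_i \delta_{a_i})(\varphi) > 0$ forces $\delta_{a_i}(\varphi) > 0$ for some $i$. Thus $\conv(M^x) \subseteq \mathfrak{M}_x(\mathcal{U}, M)$. Since $\mathfrak{M}_x(\mathcal{U}, M)$ is closed (Definition \ref{def: props of measures}(4)), the closure of $\conv(M^x)$ is contained in it, and therefore any $\mu$ in this closure is finitely satisfiable in $M$.

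For the direction ($\Rightarrow$), fix a basic neighborhood of $\mu$ determined by $\varphi_1, \ldots, \varphi_n$ and $\varepsilon$; it suffices to produce an element of $\conv(M^x)$ inside it. Let $\{\psi_S(x) : S \subseteq \{1, \ldots, n\}\}$ enumerate the atoms of the finite Boolean algebra generated by $\varphi_1, \ldots, \varphi_n$, where $\psi_S := \bigwedge_{k \in S} \varphi_k \wedge \bigwedge_{k \notin S} \neg \varphi_k$, and put $m_S := \mu(\psi_S)$, so that $\sum_S m_S = 1$ and $\mu(\varphi_k) = \sum_{S \ni k} m_S$ for each $k$. By finite satisfiability, whenever $m_S > 0$ we have $\models \psi_S(a_S)$ for some $a_S \in M^x$; set $\nu := \sum_{S : m_S > 0} m_S \, \delta_{a_S} \in \conv(M^x)$. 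Since $\delta_{a_S}(\varphi_k) = 1$ exactly when $k \in S$, we get $\nu(\varphi_k) = \sum_{S \ni k} m_S = \mu(\varphi_k)$ for every $k \le n$, so in fact $\nu$ agrees with $\mu$ on all of $\varphi_1, \ldots, \varphi_n$ and lies in the chosen neighborhood. As the neighborhood was arbitrary, $\mu \in \overline{\conv(M^x)}$.

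The argument is essentially bookkeeping once one passes to the atoms of the Boolean algebra generated by the test formulas; the key point — and the only place where a genuine idea is required — is that finite satisfiability lets one replace each positive-measure atom by a single Dirac mass at a realizing point of $M^x$, yielding exact (not merely approximate) agreement on the test formulas. An alternative, more abstract route for ($\Rightarrow$) would be a Hahn--Banach separation argument: if $\mu \notin \overline{\conv(M^x)}$, separate it from this closed convex set by a continuous linear functional $\nu \mapsto \int f \, d\nu$ with $f \in C(S_x(\mathcal{U}))$, approximate $f$ uniformly by a locally constant function built from formulas, and derive a contradiction with finite satisfiability. The direct atomic computation above avoids invoking separation and gives a stronger conclusion, so I would present it as the main proof.
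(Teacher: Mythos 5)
Your proof is correct and follows essentially the same route as the paper: for $(\Rightarrow)$ you use exactly the paper's construction, passing to the atoms of the finite Boolean algebra generated by the test formulas and placing Dirac masses at points of $M^x$ realizing the positive-measure atoms to obtain exact agreement on the test formulas. The only (negligible) variation is in $(\Leftarrow)$, where you cite closedness of $\mathfrak{M}_x(\mathcal{U},M)$ (as asserted in Definition \ref{def: props of measures}(4)) together with $\conv(M^x) \subseteq \mathfrak{M}_x(\mathcal{U},M)$, while the paper runs the equivalent one-line direct check that a formula of positive $\mu$-measure is realized by some point appearing in a nearby convex combination.
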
 

\begin{proof} Assume $\mu$ is finitely satisfiable in $M$. Let $U$ be a basic open subset of $\mathfrak{M}_{x}(\mathcal{U})$ containing $\mu$. Say
\begin{equation*}
    U = \bigcap_{i=1}^n \left\{ \mu' \in \mathfrak{M}_{x}(\cU): r_i < \mu'(\varphi_{i}(x)) < s_i\right\}
\end{equation*}
for some $n \in \mathbb{N}$, $\varphi_1(x),...,\varphi_n(x) \in  \mathcal{L}_x(\mathcal{U})$ and $r_{i},...,r_{n}, s_1,...,s_n \in [0,1]$.
The collection $\{\varphi_1(x),...,\varphi_n(x)\}$ generates a finite Boolean subalgebra of $\mathcal{L}_x(\mathcal{U})$. Let $\theta_1(x),...,\theta_m(x)$ be its atoms, and let $\Theta := \{\theta_j(x): \mu(\theta_j(x)) > 0\}$. As $\mu$ is finitely satisfiable in $M$, for each $\theta_j(x) \in \Theta$, there exists some $a_j \in M^{x}$ such that $\models \theta_j(a_j)$. Let $\nu := \sum_{\theta_j \in \Theta} \mu(\theta_j(x))\delta_{a_j} \in \mathfrak{M}_{x}(\mathcal{U})$.
Then we have $\mu(\varphi_i(x)) = \nu(\varphi_i(x))$ for all $1 \leq i \leq n$ (note that $a_j \models \theta_i \iff  i=j$), so $\nu \in U \cap \conv(M^{x})$. Hence $\mu \in \cl(\conv(M^{x}))$. 

Conversely, suppose $\mu \in \cl(\conv(M^{x}))$ and let $\psi(x) \in \mathcal{L}_{x}(\mathcal{U})$ be such that $\mu(\psi(x)) > 0$. Consider the open set $U_{\psi} := \{\nu \in \mathfrak{M}_{x}(\mathcal{U}): 0 < \nu(\psi(x))\}$ containing $\mu$. Since $\mu$ is in the closure of $\conv(M^{x})$, there exists some $\mu_{\psi}=\sum_{i=1}^{n} r_i \delta_{a_i}$, where $a_i \in  M^{x}$ for all $i$ and $\mu_{\psi} \in U_{\psi}$. But then $ \mathcal{U} \models \psi(a_i)$ for at least one $i$. 
\end{proof}

\section{Definable convolution and idempotent measures}

\subsection{Extended product of measures}\label{sec: ext prod of measures} We begin by defining a slight generalization of the product of measures that encompasses both the usual independent product of Borel-definable measures and the standard Morley product of invariant types (without any definability assumptions), and also allows to take products of $G$-invariant measures in arbitrary theories. This is accomplished by slightly tweaking the domain of the integral in the usual definition of the $\otimes$-product. 

\begin{definition}\label{Borel} Let $\mu \in \mathfrak{M}_{x}(\mathcal{U})$, $\nu \in \mathfrak{M}_{y}(\mathcal{U})$, and $\varphi(x,y,\overline{c}) \in \mathcal{L}_{xy}(\mathcal{U})$. We say that the triple $(\mu,\nu,\varphi)$ is \emph{Borel} if there exists some $N \prec \mathcal{U}$ such that:
\begin{enumerate}
    \item $\overline{c} \subseteq N$;
    \item for any $q \in \supp(\nu|_{N})$ and $d,d' \in \mathcal{U}^{y}$ with $d,d'\models q$, we have that $\mu(\varphi(x,d,\overline{c})) = \mu(\varphi(x,d',\overline{c}))$;
    \item the map $F_{\mu,N}^{\varphi}: \supp(\nu|_{N}) \to [0,1]$ is Borel, where $F_{\mu,N}^{\varphi}(q) = \mu(\varphi(x,d,\overline{c}))$ for some/any $d \models q$. 
\end{enumerate}
We say that the ordered pair $(\mu,\nu)$ is \emph{Borel} if $(\mu,\nu,\varphi)$ is Borel for any $\varphi(x,y,\overline{c}) \in \mathcal{L}_{xy}(\mathcal{U})$. 
\end{definition} 

\begin{definition}\label{def: gen prod} Assume that $(\mu,\nu)$ is Borel. Then we define the product measure $\mu \tilde{\otimes} \nu \in \mathfrak{M}_{xy}(\cU)$ as follows: given an arbitrary formula $\varphi(x,y,\bar{c}) \in \mathcal{L}_{xy}(\mathcal{U})$,
 let $N$ be any small elementary submodel of $\mathcal{U}$ witnessing that $(\mu,\nu,\varphi)$ is Borel (as in Definition \ref{Borel}); we define
\begin{equation*}
    \mu \tilde{\otimes} \nu(\varphi(x,y,\overline{c})) := \int_{\supp(\nu|_{N})} F_{\mu,N}^{\varphi} d\nu_{N},
\end{equation*}
with the notation from Definition \ref{Borel}, where $\nu_{N}$ is the restriction of the regular Borel measure $\nu|_N$ to the compact set $\supp(\nu|_{N})$. 
\end{definition}

We check that $\tilde \otimes$ is well-defined. 

\begin{proposition}\label{prop: int over diff mod} Assume that $(\mu,\nu,\varphi)$ is Borel. Then the value of $\mu \tilde{\otimes} \nu(\varphi(x;y,\overline{c}))$ in Definition \ref{def: gen prod} does not depend on the choice of $N$ (as in Definition \ref{Borel}). 
\end{proposition}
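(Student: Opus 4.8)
The plan is to reduce the statement to the case of two nested witnessing models and then invoke the pushforward/change-of-variables formula together with Proposition \ref{coheir}. First I would record that the property of witnessing Borel-ness of $(\mu,\nu,\varphi)$ in the sense of Definition \ref{Borel} is preserved under passing to a larger small model: if $N \prec \cU$ witnesses the three conditions and $N \preceq N' \prec \cU$ is small, then $N'$ witnesses them as well. Indeed, writing $r \colon S_y(N') \to S_y(N)$ for the restriction map, Proposition \ref{coheir}(1) gives $r(\supp(\nu|_{N'})) = \supp(\nu|_N)$; condition (2) for $N'$ follows because any $d,d'$ realizing a common type $q' \in \supp(\nu|_{N'})$ also realize the common type $q'|_N \in \supp(\nu|_N)$, so the equality $\mu(\varphi(x,d,\overline{c})) = \mu(\varphi(x,d',\overline{c}))$ already holds over $N$; and condition (3) for $N'$ follows since $F^{\varphi}_{\mu,N'} = F^{\varphi}_{\mu,N} \circ r$ on $\supp(\nu|_{N'})$ is the composition of the Borel map $F^{\varphi}_{\mu,N}$ with the continuous map $r$.

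Given this, to compare two arbitrary witnesses $N_1, N_2$ I would choose a small $N_3 \prec \cU$ with $N_1 \cup N_2 \subseteq N_3$ (possible by downward L\"owenheim--Skolem inside $\cU$). By the previous paragraph $N_3$ is again a witness, so it suffices to treat a single nested pair $N \preceq N'$ and to show that the two integrals that Definition \ref{def: gen prod} attaches to $N$ and to $N'$ agree; the general case then follows by comparing $N_1$ with $N_3$ and $N_2$ with $N_3$.

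For the nested case the key identities are again supplied by Proposition \ref{coheir}. With $r$ as above, part (1) gives that $r$ restricts to a continuous surjection $\supp(\nu|_{N'}) \to \supp(\nu|_N)$, and part (2) gives $r^{*}(\nu|_{N'}) = \nu|_N$. Since both supports carry full measure (Proposition \ref{prop: supp 1}(2)), this pushforward identity descends to the measures $\nu_{N'}, \nu_N$ restricted to the supports, i.e.\ $r^{*}(\nu_{N'}) = \nu_N$. Combined with the pointwise equality $F^{\varphi}_{\mu,N'} = F^{\varphi}_{\mu,N}\circ r$ noted above, the change-of-variables formula for pushforward measures yields
\begin{equation*}
\int_{\supp(\nu|_{N})} F^{\varphi}_{\mu,N}\, d\nu_N = \int_{\supp(\nu|_{N})} F^{\varphi}_{\mu,N}\, d\bigl(r^{*}\nu_{N'}\bigr) = \int_{\supp(\nu|_{N'})} \bigl(F^{\varphi}_{\mu,N}\circ r\bigr)\, d\nu_{N'} = \int_{\supp(\nu|_{N'})} F^{\varphi}_{\mu,N'}\, d\nu_{N'},
\end{equation*}
which is exactly the desired equality of the two values from Definition \ref{def: gen prod}.

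The main obstacle I anticipate is purely the measure-theoretic bookkeeping: verifying that the pushforward identity $r^{*}(\nu|_{N'}) = \nu|_N$ of Proposition \ref{coheir}(2) genuinely survives the restriction to the compact, full-measure supports, so that $r^{*}(\nu_{N'}) = \nu_N$ holds for the restricted measures rather than merely the ambient ones, and confirming that $F^{\varphi}_{\mu,N}\circ r$ really coincides with $F^{\varphi}_{\mu,N'}$ as a function on $\supp(\nu|_{N'})$ (which uses the well-definedness granted by condition (2) of Definition \ref{Borel}). Once these compatibilities are in place, the equality of the integrals is a routine application of the change-of-variables formula, and nothing further about $\mu$ or definability is needed.
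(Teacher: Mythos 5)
Your proof is correct and follows essentially the same route as the paper's: reduce to a nested pair of witnessing models via a common extension, then apply Proposition \ref{coheir} (the identities $F^{\varphi}_{\mu,N'} = F^{\varphi}_{\mu,N}\circ r$ and $r^{*}(\nu_{N'}) = \nu_{N}$) together with the change-of-variables formula for pushforward measures. The only difference is that you spell out two points the paper leaves implicit --- that a larger small model is again a witness of Borel-ness, and that the pushforward identity descends to the measures restricted to the full-measure supports --- which is a faithful elaboration rather than a different argument.
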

\begin{proof}
This proof is essentially the same as for $\otimes$ (see e.g.~\cite[Proposition 7.19]{Guide}). Assume that $(\mu,\nu,\varphi)$ is Borel with respect to both $M$ and $N$. We may assume that $M \subseteq N$ (taking a common extension). By Proposition \ref{coheir}, let $r: \supp(\nu_{N}) \to \supp(\nu_{M})$ be the restriction map; then $F_{\mu,M}^{\varphi} \circ r = F_{\mu,N}^{\varphi} $ and the pushforward of the measure $\nu_{N}$, namely $r^{*}(\nu_{N})$, is equal to $\nu_{M}$. Hence we have:
\begin{equation*}
\int_{\supp(\nu|_{M})} F_{\mu,M}^{\varphi} d(\nu_{M}) = \int_{\supp(\nu|_{M})} F_{\mu,M}^{\varphi} d r^*(\nu_{N}) = \int_{\supp(\nu|_{N})} \left( F_{\mu,M}^{\varphi} \circ r \right) d \nu_{N}
\end{equation*} 
\begin{equation*}
    = \int_{\supp(\nu|_{N})} F_{\mu,N}^{\varphi} d{\nu_{N}}.
\end{equation*}
\end{proof}

We recall the independent product of invariant types (see e.g.~\cite[Section 2.2]{Guide}).
\begin{fact}\label{fac: product of inv types}
	\begin{enumerate}
		\item Assume $M \prec \cU$ is a small submodel, $p \in S_x^{\inva}(\cU,M)$ and $\cU' \succ \cU$. There exists a unique type $p' \in S_x^{\inva}(\cU',M)$ extending $p$. Then for any $A \subseteq \cU'$, we write $p|_A$ to denote $p'|_A$.
		\item Assume that $p \in S_{x}(\mathcal{U}), q \in S_{y}(\mathcal{U})$ and $p$ is invariant. Then $p \otimes q := tp(a,b/\mathcal{U}) \in S_{xy}(\cU)$ for some/any $b \models q$ and $a \models p|_{\mathcal{U}b}$ (in some $\cU' \succ \cU$; this is well-defined by (1)). 
		\item If $p,q \in S_x(\cU, M)$ (respectively, $p,q \in S^{\inva}_x(\cU, M)$), then $p \otimes q \in S_{xy}(\cU, M)$ (respectively, $p \otimes q \in S^{\inva}_{xy}(\cU, M)$).
%	 \item Hence, if $p,q  \in S^{\fs}_x(\cU)$ (respectively, $p,q\in S^{\inva}_x(\cU)$), then $p \otimes q \in S_{xy}^{\fs}(\cU)$  (respectively, $p \otimes q \in S_{xy}^{\inva}(\cU)$).  
	\end{enumerate}
\end{fact}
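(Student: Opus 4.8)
The plan is to route all three parts through the \emph{defining scheme} of an $M$-invariant type, i.e.\ the observation that for $M$-invariant $p$ and $\varphi(x,y)\in\cL(M)$, membership $\varphi(x,c)\in p$ depends only on $\tp(c/M)$. For (1), I would define the extension directly: given $p\in S_x^{\inva}(\cU,M)$ and a parameter $c'\in\cU'^{y}$, declare $\varphi(x,c')\in p'$ iff $\varphi(x,c^*)\in p$ for some (equivalently any) $c^*\in\cU$ with $c^*\equiv_M c'$; such a $c^*$ exists by saturation of $\cU$, and the choice is irrelevant by $M$-invariance of $p$. Completeness and $M$-invariance of $p'$ are immediate from the definition, and $p'\supseteq p$ is clear (take $c^*=c'$ when $c'\in\cU$). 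The only genuine point is consistency: given $\varphi_1(x,c_1'),\dots,\varphi_n(x,c_n')\in p'$, I would collect the parameters into a tuple $\bar c'$, pull it back to $\bar c^*\in\cU$ with $\bar c^*\equiv_M\bar c'$, observe that the corresponding formulas lie in $p$ so that $\cU\models\exists x\bigwedge_i\varphi_i(x,c_i^*)$, and then transfer this existential $\cL(M)$-statement along $\bar c^*\equiv_M\bar c'$, using $\cU\preceq\cU'$, to get $\cU'\models\exists x\bigwedge_i\varphi_i(x,c_i')$. Uniqueness is forced, since any $M$-invariant extension must agree with $p$ after replacing $c'$ by $c^*$.

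For (2), the content is that $\tp(a,b/\cU)$ does not depend on the choices of $b\models q$ and $a\models p|_{\cU b}$ (nor on the ambient $\cU'$). Given $\varphi(x,y,c)$ with $c\in\cU$, I would note that $\varphi(a,b,c)$ holds iff $\varphi(x,b,c)\in p|_{\cU b}$, i.e.\ iff $\varphi(x,b,c)$ lies in the unique $M$-invariant extension $p'$ supplied by (1); by $M$-invariance of $p'$ this depends only on $\tp(bc/M)$. Since any two realizations $b_1,b_2\models q$ satisfy $b_1\equiv_\cU b_2$, and in particular $b_1\equiv_{Mc}b_2$, we get $\tp(b_1c/M)=\tp(b_2c/M)$, so both choices assign the same truth value to $\varphi(x,y,c)$, proving well-definedness.

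For (3), both preservation statements reduce to the scheme picture. For finite satisfiability I would first record the auxiliary fact that the extension $p'$ of an $M$-finitely-satisfiable type is again finitely satisfiable in $M$ (a witness $a\in M$ for $\varphi(x,c^*)$ also witnesses $\varphi(x,c')$ because $a\in M$ and $c^*\equiv_M c'$), and then run the standard two-step argument: if $\varphi(x,y,c)\in p\otimes q$ then $\varphi(x,b,c)\in p|_{\cU b}$, which is finitely satisfiable in $M$, yielding $a'\in M^{x}$ with $\models\varphi(a',b,c)$; hence $\varphi(a',y,c)\in q$, which is finitely satisfiable in $M$, yielding $b'\in M^{y}$, so $(a',b')\in M^{xy}$ witnesses $\varphi(x,y,c)$. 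For invariance I would show that $\otimes$ is equivariant: for $\sigma\in\Aut(\cU/M)$, extend $\sigma$ to $\hat\sigma\in\Aut(\cU'/M)$ and use that $\hat\sigma$ fixes the $M$-invariant objects $p'$ and $q$ to check $\sigma(p\otimes q)=\sigma(p)\otimes\sigma(q)$; when $p,q$ are $M$-invariant both factors are fixed, so $p\otimes q$ is fixed by every $\sigma\in\Aut(\cU/M)$, i.e.\ $M$-invariant.

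The main obstacle is really only the bookkeeping: keeping straight the three models $M\seq\cU\seq\cU'$ and systematically pulling parameters from $\cU'$ back into $\cU$ (and from $\cU$ back into $M$) so that the defining scheme, elementarity, and homogeneity can each be applied at the right level. Once the pullback mechanism in part (1) is set up carefully, parts (2) and (3) follow as formal consequences.
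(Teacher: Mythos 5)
Your proof is correct, and it is the standard argument: the paper does not prove this statement at all, but records it as a background Fact with a citation to \cite[Section 2.2]{Guide}, and your scheme-based construction of the unique invariant extension in (1), the reduction of well-definedness in (2) to invariance of $p'$, and the two-step finite-satisfiability argument plus automorphism-equivariance in (3) are exactly the proofs one finds there. The only points worth making explicit are that extending $\sigma\in\Aut(\cU/M)$ to $\hat\sigma\in\Aut(\cU'/M)$ uses strong homogeneity of the larger monster $\cU'$, and that ``$\hat\sigma$ fixes $p'$'' is not automatic but follows from the uniqueness clause of (1), since $\hat\sigma(p')$ is again an $M$-invariant extension of $\sigma(p)=p$; both are routine and do not affect correctness.
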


The product $\tilde{\otimes}$ extends both the independent product on invariant types and the product of Borel definable probability measures in arbitrary theories.
\begin{proposition}\label{prop: gen prod ext type prod}
	\begin{enumerate}
\item Let $\mu \in \mathfrak{M}_{x}(\mathcal{U})$ and $\nu \in \mathfrak{M}_{y}(\mathcal{U})$. Assume that $\mu$ is Borel-definable. Then, $\mu \otimes \nu = \mu \tilde{\otimes} \nu$.  
\item If $\mu \in \mathfrak{M}_x(\cU)$ is invariant and $q\in S_x(\cU)$ is arbitrary, then $(\mu,\delta_q)$ is Borel and $\mu \tilde{\otimes} \delta_q$ is well-defined.
\item Let $p \in S_{x}(\mathcal{U})$ and $q \in S_{y}(\mathcal{U})$, and $p$ is invariant. Then $\delta_{p \otimes q} = \delta_p \tilde{\otimes} \delta_q$, where $p \otimes q$ is the free product (see Fact \ref{fac: product of inv types}).
\end{enumerate} 
\end{proposition}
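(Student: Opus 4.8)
The plan is to handle the three parts in order, each time reducing the generalized product $\tilde\otimes$ to an object that is already understood. For (1), I would first verify that $(\mu,\nu)$ is Borel in the sense of Definition \ref{Borel}. Fix $\varphi(x,y,\overline{c})$, choose a small $M \prec \cU$ over which $\mu$ is Borel-definable, and a small $N \prec \cU$ with $M \cup \overline{c} \subseteq N$. Condition (2) of Definition \ref{Borel} is immediate from $N$-invariance of $\mu$ (if $d \equiv_N d'$ then $d\overline{c} \equiv_N d'\overline{c}$ since $\overline{c}\subseteq N$), and for condition (3) I would observe that $q \mapsto \tp(d,\overline{c}/M)$ (for $d \models q$) is a well-defined continuous map $S_y(N) \to S_{yz}(M)$ — its preimage of a basic clopen $[\chi(y,z)]$ is the clopen $[\chi(y,\overline{c})]$ — so that $F_{\mu,N}^{\varphi}$ is the composite of this continuous map with the Borel level function $F_{\mu,M}^{\psi}$ of $\mu$ (for $\psi(x;y,z):=\varphi(x,y,z)$), hence Borel on all of $S_y(N)$ and in particular on $\supp(\nu|_N)$. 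It then remains to compare integrals: by the definition of the usual Borel-definable product, $\mu \otimes \nu(\varphi) = \int_{S_y(N)} F_{\mu,N}^{\varphi}\, d(\nu|_N)$, whereas $\mu \tilde\otimes \nu(\varphi) = \int_{\supp(\nu|_N)} F_{\mu,N}^{\varphi}\, d\nu_N$. Since $\nu_N$ is the restriction of $\nu|_N$ to $\supp(\nu|_N)$ and $\nu|_N(\supp(\nu|_N)) = 1$ by Proposition \ref{prop: supp 1}(2), the complement is $\nu|_N$-null and the two integrals of the same Borel function coincide.

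For (2), the key observation is that $\supp(\delta_q|_N)$ is a single point $\{q|_N\}$ for every small $N$ containing the parameters of the given formula. Conditions (1) and (2) of Definition \ref{Borel} then hold exactly as in part (1), using invariance of $\mu$, and condition (3) is automatic because any function on a one-point space is Borel. Hence $(\mu,\delta_q)$ is Borel and $\mu \tilde\otimes \delta_q$ is well-defined, its value being independent of $N$ by Proposition \ref{prop: int over diff mod}. Unwinding Definition \ref{def: gen prod} over the one-point support, the product evaluates to $\mu \tilde\otimes \delta_q(\varphi(x,y,\overline{c})) = F_{\mu,N}^{\varphi}(q|_N) = \mu(\varphi(x,d,\overline{c}))$ for any $d \models q|_N$.

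For (3), I would apply (2) with $\mu = \delta_p$ (invariant because $p$ is) to get that $\delta_p \tilde\otimes \delta_q$ is defined, and then compare the two measures formula-by-formula. Fix $\varphi(x,y,\overline{c})$ and a small $N$ with $\overline{c} \subseteq N$ over which $p$ is invariant. By the computation in (2), $\delta_p \tilde\otimes \delta_q(\varphi) = \delta_p(\varphi(x,d,\overline{c}))$ for any $d \models q|_N$, equal to $1$ iff $\varphi(x,d,\overline{c}) \in p$; by saturation I may take such a $d$ inside $\cU$. On the other side, $p \otimes q = \tp(a,b/\cU)$ with $b \models q$ and $a \models p|_{\cU b}$ (Fact \ref{fac: product of inv types}), so $\varphi(x,y,\overline{c}) \in p \otimes q$ iff $\varphi(x,b,\overline{c}) \in p|_{\cU b}$. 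The main obstacle, and really the only content of the argument, is matching these two conditions across the gap that $b$ lives in an extension of $\cU$ while $d \in \cU$: since $b \equiv_N d$ (both realize $q|_N$) and $\overline{c} \subseteq N$, $N$-invariance of the extension $p|_{\cU b}$ gives $\varphi(x,b,\overline{c}) \in p|_{\cU b}$ iff $\varphi(x,d,\overline{c}) \in p|_{\cU b}$, and the latter reduces to $\varphi(x,d,\overline{c}) \in p$ because $d \in \cU$ and $p|_{\cU b}$ restricts to $p$. Thus $\delta_{p\otimes q}(\varphi) = \delta_p \tilde\otimes \delta_q(\varphi)$ for every $\varphi$, establishing (3).
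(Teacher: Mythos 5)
Your proof is correct and follows essentially the same route as the paper's: part (1) by observing that $\supp(\nu|_N)$ carries full $\nu|_N$-measure so the two integrals agree, part (2) via the one-point support of $\delta_q|_N$, and part (3) by evaluating over that single point and matching with $p\otimes q$ through $N$-invariance. You spell out some steps the paper leaves terse (the explicit verification that Borel-definability of $\mu$ makes $(\mu,\nu,\varphi)$ Borel, and the invariance argument bridging $b$ in the larger monster with $d\in\cU$), but these are exactly the details the paper's ``easy to see'' and ``some/any'' phrasing compress, not a different argument.
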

\begin{proof}
	(1) Assume that $\mu$ is Borel-definable over $M \prec \cU$. Let $\varphi(x,y,\bar{c})$ be an arbitrary formula in $\cL_{xy}(\cU)$, and let $M' \prec \cU$ witness $(\mu, \nu, \varphi)$ is Borel. Taking $N \prec \cU$ to be an elementary extension of both $M$ and $M'$, we have that both $\mu$ is Borel-definable over $N$ and $N$ witnesses that $(\mu, \nu, \varphi)$ is Borel.
	It is then easy to see that $\int_{S_{y}(N)}F^{\varphi}_\mu d (\nu|_N)  =  \int_{\supp(\nu|_N)}F^{\varphi}_\mu d \nu_N$ as long as the integral on the left hand side is well-defined --- which is the case by Borel-definability of $\mu$.
	
	(2) Let $\varphi(x,y) \in \mathcal{L}(\cU)$, and let $N \prec \cU$ containing all the parameters from $\varphi$ be such that $\mu$ is invariant over $N$.
Note that the map $F^\varphi_{\mu}: S_y(N) \to [0,1]$ need not be Borel definable. However $\supp(\delta_q|_N)$ is a single point as $q$ is a type, hence $F^\varphi_{\mu} \restriction_{\supp(\delta_q|_N)}$ is trivially Borel.
	
	(3) By (2), $(\delta_p, \delta_q)$ is Borel. Let $N \prec \cU$ witness this, and let $b \in \cU^y, b \models q|_N$. Then 
\begin{equation*} \delta_{p} \tilde{\otimes}\delta_{q}(\varphi(x;y))=\int_{\supp(\delta_{q}|_{N})} F_{\delta_{p}}^{\varphi} d(\delta_q)_{N} = F_{\delta_{p}}^{\varphi}(q|_{N}) = \begin{cases}
\begin{array}{cc}
1 & \varphi(x, b) \in p,\\
0 & \neg \varphi(x,b) \in p.
\end{array}\end{cases} 
\end{equation*} 

That is, $\delta_{p} \tilde{\otimes}\delta_{q}(\varphi(x;y)) = 1$ if and only if $\varphi(x,y) \in \tp(a,b/N)$  for some/any $b\models q|_N$ and $a \models q|_{Nb}$. 
\end{proof}

\begin{remark}\label{rem: tilde otimes on types}
	If $p \in S_x(\cU), q \in S_y(\cU)$, we say that $(p,q)$ is Borel if $(\delta_{p}, \delta_{q})$ is Borel. In this case the proof of Proposition \ref{prop: gen prod ext type prod}(3) shows that there exists some $r \in S_{xy}(\cU)$ such that $\delta_{r} = \delta_p \tilde{\otimes} \delta_q$, we will denote it by $r := p \tilde{\otimes} q$ --- by Proposition \ref{prop: gen prod ext type prod} this extends the $\otimes$ operation on invariant types to a larger class of types. 
\end{remark}

From now on we will simply write $\otimes$ instead of $\tilde{\otimes}$ to denote this extended operation (on types and measures) when there is no ambiguity involved. 

\begin{definition}
	We say that $\mu \in \mathfrak{M}_x(\cU)$ and $\nu \in  \mathfrak{M}_y(\cU)$ ($\otimes$-)\emph{commute} if both $(\mu,\nu)$ and $(\nu,\mu)$ are Borel, and $\mu \otimes \nu = \nu \otimes \mu$.
\end{definition}

We recall  some facts about the $\otimes$ operation and commuting measures.
 \begin{fact}\label{fac: meas commute} Assume that $\mu \in \mathfrak{M}_{x}(\mathcal{U}), \nu \in \mathfrak{M}_{y}(\mathcal{U}), \lambda \in \mathfrak{M}_{z}(\mathcal{U})$ and $M \prec \cU$.
 \begin{enumerate}
 	\item \cite[Theorem 2.5]{NIP3}\label{fac: smooth commute} Assume that $\mu$ is Borel-definable over $M$ and $\nu$ is smooth over $M$. Then, for any $\varphi(x;y) \in \mathcal{L}_{xy}(M)$, we have that,
\begin{equation*} \int_{S_{x}(M)} F_{\mu}^{\varphi} d(\nu|_{M}) = \int_{S_{y}(M)} F_{\nu}^{\varphi^*}d(\mu|_M).
\end{equation*} 
In particular, $\mu \otimes \nu = \nu \otimes \mu$. 

\item (\cite[Proposition 2.13]{NIP5} or \cite[Proposition 2.10]{CG}) If $\mu$ and $\nu$ are finitely approximated over $M$, then $\mu \otimes \nu = \nu \otimes \mu$.
\item \cite[Proposition 7.30]{Guide} If $T$ is NIP, $\mu$ is dfs over $M$ and $\nu$ is invariant over $M$, then  $\mu \otimes \nu = \nu \otimes \mu$.
\item \cite[Corollary 1.3]{GanCon2} If $\mu$ and $\nu$ are smooth over $M$, then $\mu \otimes \nu$ is also smooth over $M$.
\item If $T$ is NIP and $\mu,\nu$ are invariant, then $\mu \otimes (\nu \otimes \lambda) = (\mu \otimes \nu) \otimes \lambda$. (See \cite{GanCon2}, Theorem 2.2 and the introduction there.)
 \end{enumerate}
 \end{fact}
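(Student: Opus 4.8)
The plan is to treat the five items as increasingly general instances of one Fubini-type identity whose core is item (1), and to isolate associativity (5) as the genuinely hard part. Throughout I would fix a partitioned formula $\varphi(x;y) \in \cL_{xy}(M)$ (absorbing parameters into $M$) and, unwinding both products via Definition \ref{def: gen prod}, reduce each commutativity statement to the single equality $\int F_{\mu}^{\varphi} \, d\nu = \int F_{\nu}^{\varphi^*} \, d\mu$.

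For (1) the idea is to replace the smooth measure $\nu$ by a finite average and collapse everything to symmetric counting. Since $\nu$ is smooth over $M$ it is finitely approximated over $M$ by Fact \ref{fac: props of measures relation}, so for each $\varepsilon$ I choose $\bbar = (b_1,\dots,b_n) \in (M^y)^n$ with $\nu(\varphi(a,y)) \approx_{\varepsilon} \Av_{\bbar}(\varphi(a,y))$ uniformly in $a \in \cU^x$. Then $\int F_{\mu}^{\varphi}\, d\nu \approx_\varepsilon \frac{1}{n}\sum_j \mu(\varphi(x,b_j)) = \int \Av_{\bbar}(\varphi(x,y)) \, d\mu$, and the uniform approximation lets me replace $\Av_{\bbar}(\varphi(x,y))$ by $\nu(\varphi(x,y)) = F_{\nu}^{\varphi^*}(x)$ inside the $\mu$-integral, yielding $\int F_{\nu}^{\varphi^*}\, d\mu$ up to $2\varepsilon$; letting $\varepsilon \to 0$ gives the identity. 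Borel-definability of $\mu$, and of $\nu$ (smooth $\Rightarrow$ definable), guarantees both iterated integrals are defined. The one delicate point is justifying the first approximation at the level of the bounded Borel function $F_{\mu}^{\varphi}$ rather than a single indicator, which is where I would invoke the uniform (VC-type) convergence underlying finite approximability.

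Item (2) is the same argument run symmetrically: approximate $\mu$ by $\Av_{\abar}$ and $\nu$ by $\Av_{\bbar}$, so both products collapse to $\frac{1}{nm}\,|\{(i,j): {\models}\,\varphi(a_i,b_j)\}|$, which is manifestly symmetric, and a two-step approximation (first in one variable, then the other, tracking the $\varepsilon$-errors) shows $\mu \otimes \nu$ and $\Av_{\abar}\otimes\Av_{\bbar}$ agree on $\varphi$ in the limit. For (3) I would reduce to (1): extend $\nu|_M$ to a measure $\nu'$ smooth over some $N \succeq M$ using Fact \ref{fac: NIP ext to smooth}, so $\nu'|_M = \nu|_M$. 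Since $\mu$ is definable, $F_{\mu}^{\varphi}$ descends to $S_y(M)$, whence $\mu\otimes\nu(\varphi) = \int_{S_y(M)} F_{\mu}^{\varphi} \, d(\nu|_M) = \mu\otimes\nu'(\varphi)$; combined with $\mu\otimes\nu' = \nu'\otimes\mu$ from (1), it remains to see $\nu'\otimes\mu(\varphi^*) = \nu\otimes\mu(\varphi^*)$. Here I use that $\mu$ is finitely satisfiable in $M$ (Proposition \ref{convfs}) and that for $a\in M$ the formula $\varphi(a,y)$ lies in $\cL_y(M)$, so the integrands $\nu(\varphi(x,y))$ and $\nu'(\varphi(x,y))$ agree on all of $M^x$; the finite-satisfiability machinery then forces the $\mu$-integrals to coincide. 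For (4) I would use the uniqueness-of-extension characterization of smoothness: any two global extensions of $(\mu\otimes\nu)|_M$ are squeezed, separately in the $y$- and then the $x$-variable, between $M$-definable sets of arbitrarily close $\nu$- and $\mu$-measure via smoothness of each factor, forcing them to agree.

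The main obstacle is (5), associativity of $\otimes$ for invariant measures under NIP, which is not a single Fubini step: unravelling $\mu\otimes(\nu\otimes\lambda)$ and $(\mu\otimes\nu)\otimes\lambda$ requires interchanging three integrations, and for measures (unlike types) there is no a priori Fubini theorem. My plan would be to approximate the inner measures by smooth ones via Fact \ref{fac: NIP ext to smooth} and to leverage the commutativity and integrability of (1)–(4) to justify each interchange, reducing to the known associativity of the independent product on invariant types (Fact \ref{fac: product of inv types}) after passing to supports; making the approximations compatible across all three factors simultaneously is the delicate part, and is precisely what requires the separate treatment in the cited work. A secondary recurring obstacle, already present in (1)–(3), is transferring a finite approximation of one measure from indicators $\chi_{\varphi(x,b)}$ to the Borel level function $F_{\nu}^{\varphi^*}$ of the other; under NIP this is the uniform-convergence phenomenon underlying finite approximability, which I would isolate as a lemma before assembling the five parts.
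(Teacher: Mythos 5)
First, note that the paper itself does not prove this Fact at all: every item is imported from the literature with a citation, so your proposal has to stand on its own merits rather than be compared to an in-paper argument.

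There is a genuine gap, and it sits exactly at the step you flag as ``delicate'' in item (1): the approximation $\int F_{\mu}^{\varphi}\, d(\nu|_M) \approx_{\varepsilon} \frac{1}{n}\sum_j \mu(\varphi(x,b_j)) = \int F_{\mu}^{\varphi}\, d\Av_{\bar{b}}$. This is a \emph{measure} replacement under the integral of a fixed, merely Borel function, and finite approximability does not justify it. What fam gives you is that the level function $F_{\nu}^{\varphi^*}$ is uniformly close to $F_{\Av_{\bar{b}}}^{\varphi^*}$ as functions, i.e.\ $\nu$ and $\Av_{\bar{b}}$ are close on all \emph{instances} $\varphi(a,y)$; two measures can satisfy this and still integrate a fixed Borel function on $S_y(M)$ very differently. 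Your proposed patch --- ``uniform (VC-type) convergence underlying finite approximability'' --- is not available: item (1) carries no NIP hypothesis (it holds in arbitrary theories), and even under NIP the VC-type uniform bounds govern definable families, not the function $F_{\mu}^{\varphi}$, which for a Borel-definable but non-definable $\mu$ admits no uniform approximation by $M$-definable simple functions. The ingredient that actually makes (1) work is the \emph{sandwiching} characterization of smoothness (Lemma 2.3 of [NIP3], or Lemma 7.8 in Simon's book): for every $\varepsilon$ there are an $M$-definable partition $\psi_1(x),\ldots,\psi_k(x)$ and $M$-definable sets $\theta_i^0(y) \subseteq \varphi(a,y) \subseteq \theta_i^1(y)$ for all $a \models \psi_i$, with $\nu(\theta_i^1) - \nu(\theta_i^0) < \varepsilon$. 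This sandwiches \emph{both} level functions pointwise between definable simple functions and squeezes both iterated integrals into a common interval of length $\varepsilon$; it is strictly stronger than fam and is precisely where smoothness (rather than finite approximability) is used. This also explains why your item (2) \emph{is} correct: when both measures are fam, every step can be arranged as an integrand replacement (uniform pointwise closeness of level functions integrated against a fixed measure), which is legitimate; in (1) one direction is unavoidably a replacement of the measure itself.

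The same gap resurfaces in your item (3). The reduction via a smooth extension $\nu'$ of $\nu|_M$ is the right opening move (and $\mu \otimes \nu(\varphi) = \mu \otimes \nu'(\varphi)$ is fine, since that integral depends only on $\nu|_M$), but the closing step --- that $F_{\nu}^{\varphi^*}$ and $F_{\nu'}^{\varphi^*}$ agree on $M$-points and finite satisfiability of $\mu$ ``forces the $\mu$-integrals to coincide'' --- fails. Density of the $M$-realized types in the support of $\mu$ determines the integral of the \emph{continuous} function $F_{\nu'}^{\varphi^*}$ (continuous because $\nu'$ is smooth, hence definable), but $\nu$ is only invariant, so $F_{\nu}^{\varphi^*}$ is merely Borel, and a Borel function is not determined on the support by its values on a dense subset. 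This missing step is essentially the entire content of (3); the cited proofs go through generic stability (fim) of dfs measures in NIP together with a further argument (e.g.\ a law-of-large-numbers style use of Morley powers), not through (1) alone. Finally, your (4) and (5) are plans rather than proofs --- (5) explicitly defers the hard compatibility-of-approximations issue back to the cited work --- so as a whole the proposal establishes only item (2).
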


\subsection{Definable convolution}\label{sec: def of conv}
Throughout this section, we let $T$ be a first order $\mathcal{L}$-theory expanding a group. We let $\mathcal{G}$ be a sufficiently saturated model of $T$, and $G$ denotes a small elementary submodel. We use letters $x,y$ to denote \emph{singleton} variables, i.e.~ of the sort on which the group is defined. For any formula $\varphi(x,\overline{c}) \in \mathcal{L}_{x}(\mathcal{G})$, we let $\varphi'(x,y,\overline{c}) = \varphi(x \cdot y,\overline{c})$. 

\begin{definition}\label{def: conv} Let $\mu,\nu \in \mathfrak{M}_{x}(\mathcal{G})$, and let $\nu_{y}$ denote the measure in $\mathfrak{M}_{y}(\mathcal{G})$ such that for any $\varphi(y) \in \mathcal{L}_{y}(\mathcal{G})$, $\nu_{y}(\varphi(y)) = \nu(\varphi(x))$.
\begin{enumerate}
	\item We say that \emph{$(\mu,  \nu)$ is $*$-Borel} if for every formula $\varphi(x,\overline{c}) \in \mathcal{L}_{x}(\mathcal{G})$, the triple $(\mu,\nu_{y},\varphi')$ is Borel. We say that \emph{$\mu$ is $*$-Borel} if the pair $(\mu,\mu)$ is $*$-Borel.

\item  If $(\mu,\nu)$ is $*$-Borel, then we define the \textit{(definable) convolution product of $\mu$ with $\nu$} as follows: 
\begin{equation*}
    \mu * \nu (\varphi(x,\overline{c})) = \mu \tilde{\otimes} \nu_{y} (\varphi'(x,y, \overline{c})) =  \int_{\supp(\nu_{y}|_{G})} F_{\mu}^{\varphi'} d\nu_G(y),
\end{equation*}
where  $G$ is some/any small submodel of $\mathcal{G}$ witnessing that  $(\mu,\nu_{y},\varphi')$ is Borel and $\nu_G(y)$ is the Borel measure $\nu_y$ restricted to $\supp(\nu_y|_G)$ (as in Definition \ref{def: gen prod}).
We will routinely write this product simply as $\int F_{\mu}^{\varphi'}d\nu$ when there is no possibility of confusion.
\end{enumerate}
\end{definition}

Note that we are integrating over translates with respect to the \emph{right action of $\mathcal{G}$}, and in general throughout the article, when speaking about $\mathcal{G}$-invariance and related notion, we will typically consider the action of $\mathcal{G}$ \emph{on the right}. This choice is made to make sure that this definition correctly extends Newelski's product of types (Proposition \ref{Newelski}), but of course all of our results hold with respect to left actions modulo obvious modifications. First we check that the convolution operation indeed defines a measure. 

\begin{fact} Let $\mu, \nu \in \mathfrak{M}_{x}(\mathcal{G})$. If $(\mu,\nu)$ is $*$-Borel, then $\mu * \nu$ is a Keisler measure. 
\end{fact}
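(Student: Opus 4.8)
The plan is to verify the Kolmogorov axioms for a finitely additive probability measure directly from the definition of $\mu * \nu$ via the extended product $\tilde{\otimes}$. Since $\mu * \nu(\varphi(x,\overline{c}))$ is defined as $\mu \tilde{\otimes} \nu_y(\varphi'(x,y,\overline{c}))$, and Proposition \ref{prop: int over diff mod} already guarantees that this value is independent of the choice of witnessing model $N$, it suffices to check: (i) $\mu * \nu$ is well-defined as a function on $\mathcal{L}_x(\mathcal{G})$, i.e.~logically equivalent formulas receive the same value; (ii) $\mu * \nu(x = x) = 1$; (iii) $\mu * \nu(\varphi) \in [0,1]$ for all $\varphi$; and (iv) finite additivity, $\mu * \nu(\varphi \vee \psi) = \mu * \nu(\varphi) + \mu * \nu(\psi)$ whenever $\models \neg(\varphi \wedge \psi)$.

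First I would observe that each of these properties reduces to a corresponding statement about the integral $\int_{\supp(\nu_y|_N)} F_{\mu}^{\varphi'} \, d\nu_N$. For (ii), note that $(x=x)' = (x \cdot y = x \cdot y)$ holds identically, so $F_{\mu}^{(x=x)'} \equiv \mu(x \cdot y = x \cdot y) = 1$ is the constant function $1$, and since $\nu_N$ is a probability measure on $\supp(\nu_y|_N)$ with total mass $1$ (by Proposition \ref{prop: supp 1}(2)), the integral evaluates to $1$. For (iii), the integrand $F_{\mu}^{\varphi'}$ takes values in $[0,1]$ because $\mu$ is a measure, so integrating against the probability measure $\nu_N$ yields a value in $[0,1]$.

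The key step is additivity (iv). If $\varphi, \psi \in \mathcal{L}_x(\mathcal{G})$ are inconsistent, then for each fixed $d \in \mathcal{G}^y$ the formulas $\varphi(x \cdot d, \overline{c})$ and $\psi(x \cdot d, \overline{c})$ remain inconsistent (translation by $d$ is a bijection of $\mathcal{G}$), so by finite additivity of $\mu$ we get the pointwise identity $F_{\mu}^{(\varphi \vee \psi)'}(q) = F_{\mu}^{\varphi'}(q) + F_{\mu}^{\psi'}(q)$ on $\supp(\nu_y|_N)$. Additivity of $\mu * \nu$ then follows from linearity of the integral, after passing to a common model $N$ witnessing that all three triples $(\mu, \nu_y, \varphi')$, $(\mu, \nu_y, \psi')$, and $(\mu, \nu_y, (\varphi \vee \psi)')$ are Borel (such a common $N$ exists by taking a joint elementary extension, and the values are unchanged by Proposition \ref{prop: int over diff mod}). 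Point (i), well-definedness, is handled similarly: if $\models \varphi \leftrightarrow \psi$ then $F_{\mu}^{\varphi'} = F_{\mu}^{\psi'}$ pointwise.

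The main obstacle I anticipate is purely bookkeeping rather than conceptual: one must ensure all the integrals in the additivity computation are taken over a single support $\supp(\nu_y|_N)$ with a single Borel measure $\nu_N$, since a priori the definition permits each formula to be witnessed by a different model. Invoking Proposition \ref{prop: int over diff mod} to align the models, and verifying that the relevant level functions are genuinely Borel on the common support (which is part of the $*$-Borel hypothesis), resolves this. No NIP or definability assumption is needed beyond $*$-Borelness, since all the analytic content is already packaged into the well-definedness of $\tilde{\otimes}$.
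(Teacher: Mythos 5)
Your proposal is correct and follows essentially the same route as the paper: the core step in both is the pointwise identity $F_{\mu}^{(\psi_1 \vee \psi_2)'} = F_{\mu}^{\psi_1'} + F_{\mu}^{\psi_2'}$ on $\supp(\nu_y|_N)$, justified by the fact that right translation preserves disjointness, followed by linearity of the integral over a single common witnessing model. The paper merely handles complementation via $\mu*\nu(\neg\varphi) = 1 - \mu*\nu(\varphi)$ and leaves the bookkeeping (common model, range in $[0,1]$, invariance under logical equivalence) implicit, while you spell it out; there is no substantive difference.
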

\begin{proof} Clearly $\mu * \nu ( x = x) =1$ and $\mu * \nu(\neg \varphi(x)) =1 -  \mu * \nu(\varphi(x))$. Assume that $\psi_1(x), \psi_2(x) \in \cL_x(\cU)$ satisfy $\psi_1(x) \wedge \psi_2(x) = \emptyset$. Let  $\theta(x;y) := \psi_1(x \cdot y) \vee \psi_2(x \cdot y)$, and let $G \prec \mathcal{G}$ witness that both $(\mu,\nu_y,\psi'_1)$ and $(\mu,\nu_y,\psi'_2)$ are Borel. Then for any $q \in \supp(\nu|_G)$ and $b \models q$ we have $F_{\mu}^{\theta}(q) = \mu(\theta(x;b)) = \mu(\psi_1(x\cdot b) \vee \psi_2(x \cdot b))$.
As $\psi_1(x) \wedge \psi_2(x) = \emptyset$ implies $\psi_1(x \cdot b) \land \psi_2(x \cdot b) = \emptyset$, we have 
\begin{equation*}
    F_{\mu}^{\theta}(q) = \mu(\psi_1(x\cdot b)) + \mu(\psi_2(x \cdot b)) = F_{\mu}^{\psi'_1}(q) + F_{\mu}^{\psi'_2}(q).
\end{equation*}
Then
\begin{equation*}
    (\mu * \nu)(\psi_1(x) \vee \psi_2(x)) = \int_{\supp(\nu|_{G})} F^{\theta}_\mu d\nu_{G} = \int_{\supp(\nu|_{G})} \left( F^{\psi'_1}_\mu + F^{\psi'_2}_\mu  \right) d\nu_{G}
\end{equation*}
\begin{equation*}
   = \int_{\supp(\nu|_{G})}  F^{\psi'_1}_\mu   d\nu_{G} + \int_{\supp(\nu|_{G})}F^{\psi'_2}_\mu  d\nu_{G}   = (\mu*\nu)(\psi_1(x)) + (\mu*\nu)(\psi_2(x)). 
\end{equation*}
\end{proof}

This notion of convolution extends the notion of the product of invariant types extensively studied by Newelski \cite{N1, N2} and others from the point of view of topological dynamics. The following is easy using Fact \ref{fac: product of inv types}.

\begin{fact}\label{NTM} Let $G \prec \mathcal{G}$ be a small model. Given $p,q \in S_{x}^{\inva}(\mathcal{G},G)$, we define $p * q := \tp(a \cdot b/\mathcal{G}) \in S_{x}^{\inva}(\mathcal{G},G)$, for some/any $(a,b) \models p \otimes q$ in a larger monster model. Then $\left(S^{\inva}_{x}(\mathcal{G},G),* \right)$ is a semigroup, with multiplication continuous in the left coordinate: for each $q \in S_{x}^{\inva}(\mathcal{G},G)$, the map $ -* q: S_{x}^{\inva}(\mathcal{G},G) \to S_{x}^{\inva}(\mathcal{G},G)$ is continuous.
And $(S_{x}(\mathcal{G},G),*)$ is a closed sub-semigroup.
\end{fact}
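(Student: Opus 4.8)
The plan is to verify in turn that $p*q$ is a well-defined $G$-invariant type, that $*$ is associative, that left translation $-*q$ is continuous, and finally that the finitely satisfiable types form a closed sub-semigroup; all four reduce to Fact \ref{fac: product of inv types}. For well-definedness, I would observe that $p*q$ is precisely the pushforward of $p\otimes q$ under the $\emptyset$-definable multiplication map $m\colon(x,y)\mapsto x\cdot y$: for $\varphi(x,\bar c)\in\mathcal{L}_x(\mathcal{G})$ one has $\varphi(x,\bar c)\in p*q$ iff $\varphi(x\cdot y,\bar c)\in p\otimes q$. Since $p\otimes q\in S^{\inva}_{xy}(\mathcal{G},G)$ by Fact \ref{fac: product of inv types}(3) and $\varphi(x\cdot y,\bar c)$ still has parameters only $\bar c$, any $\bar c\equiv_G\bar c'$ gives $\varphi(x\cdot y,\bar c)\in p\otimes q\Leftrightarrow\varphi(x\cdot y,\bar c')\in p\otimes q$, so $p*q$ is $G$-invariant; completeness of $p\otimes q$ over $\mathcal{G}$ gives independence of the choice of realization $(a,b)$.

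For associativity I would pass to a larger monster $\mathcal{U}\succ\mathcal{G}$ and pick a single coherent realization: take $c\models r$, then $b\models q|_{\mathcal{G}c}$, then $a\models p|_{\mathcal{G}bc}$, so that $(a,b,c)$ simultaneously realizes the relevant restrictions. Using uniqueness of invariant extensions (Fact \ref{fac: product of inv types}(1)) one checks that $a\cdot b\models(p*q)|_{\mathcal{G}c}$ and $a\models p|_{\mathcal{G}(b\cdot c)}$, whence $(p*q)*r=\tp((a\cdot b)\cdot c/\mathcal{G})$ and $p*(q*r)=\tp(a\cdot(b\cdot c)/\mathcal{G})$; these agree by associativity of the group operation. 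The only point to track is that restriction to $\mathcal{G}c$ commutes with the pushforward along $m$, which is immediate since $m$ is $\emptyset$-definable.

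The main point is left-continuity. Fix $q$ and a single realization $b\models q$ in $\mathcal{U}\succ\mathcal{G}$. For each $p\in S^{\inva}_x(\mathcal{G},G)$ let $\tilde p\in S^{\inva}_x(\mathcal{U},G)$ be its unique $G$-invariant extension; then for every $\varphi(x,\bar c)\in\mathcal{L}_x(\mathcal{G})$ we have $\varphi(x,\bar c)\in p*q$ iff $\varphi(x\cdot b,\bar c)\in\tilde p$, uniformly in $p$ with the \emph{same} $b$. The key observation is that the restriction map $S^{\inva}_x(\mathcal{U},G)\to S^{\inva}_x(\mathcal{G},G)$ is a continuous bijection of compact Hausdorff spaces (surjectivity and injectivity being exactly existence and uniqueness of invariant extensions, Fact \ref{fac: product of inv types}(1)), hence a homeomorphism, so $p\mapsto\tilde p$ is continuous. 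Consequently $\{p:\varphi(x,\bar c)\in p*q\}$ is the preimage under $p\mapsto\tilde p$ of the clopen set $\{s\in S^{\inva}_x(\mathcal{U},G):\varphi(x\cdot b,\bar c)\in s\}$, hence clopen; as such sets form a subbasis for the topology, $-*q$ is continuous.

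Finally, $S_x(\mathcal{G},G)$ is a closed subset of $S^{\inva}_x(\mathcal{G},G)$, and it is closed under $*$: Fact \ref{fac: product of inv types}(3) gives $p\otimes q\in S_{xy}(\mathcal{G},G)$, and the pushforward under $m$ of a type finitely satisfiable in $G$ is again finitely satisfiable in $G$, since any witness $a',b'\in G$ for $\varphi(x\cdot y,\bar c)$ yields the witness $a'\cdot b'\in G$ for $\varphi(x,\bar c)$ (here we use that $G$ is a model, hence closed under multiplication). I expect the left-continuity step to be the only one requiring genuine care, specifically the realization that a single fixed $b\models q$ serves for all $p$ at once together with the homeomorphism between the two invariant-type spaces; the remaining steps are bookkeeping with invariant extensions and the group axioms.
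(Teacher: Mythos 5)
Your proposal is correct, and it follows exactly the route the paper intends: the statement is left unproved as a Fact with the remark that it ``is easy using Fact~\ref{fac: product of inv types}'', and your write-up is precisely that reduction carried out in full (well-definedness and invariance via the pushforward of $p \otimes q$ under the $\emptyset$-definable multiplication, associativity via a coherent realization $c \models r$, $b \models q|_{\mathcal{G}c}$, $a \models p|_{\mathcal{G}bc}$, left-continuity via a single fixed $b \models q$ together with the restriction map $S^{\inva}_x(\mathcal{U},G) \to S^{\inva}_x(\mathcal{G},G)$ being a continuous bijection of compact Hausdorff spaces and hence a homeomorphism, and closure of $S_x(\mathcal{G},G)$ under $*$ from Fact~\ref{fac: product of inv types}(3) plus $G$ being closed under the group operation). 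All steps check out, so there is nothing to correct.
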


\begin{proposition}\label{Newelski} Let $ \delta:S_{x}^{\inva}(\mathcal{G},G) \to \mathfrak{M}_{x}^{\inva}(\mathcal{G},G)$ be the map $\delta(p) = \delta_{p}$. Then  $\delta$ is a topological embedding,  and $\delta_{p * q} = \delta_{p} * \delta_{q}$ for any $p,q \in S_x(\mathcal{G},G)$. 
\end{proposition}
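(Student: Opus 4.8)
The plan is to treat the two assertions separately: that $\delta$ is a topological embedding, and that it intertwines the two products. The embedding is essentially bookkeeping on top of the identification $p \mapsto \delta_p$ already recalled in Section \ref{sec: prelims on Keisl meas}, while the substance is in the multiplicativity, which reduces to the extended product computations of Section \ref{sec: ext prod of measures}.

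For the embedding, I would first check that $\delta$ really lands in $\mathfrak{M}_x^{\inva}(\mathcal{G},G)$: if $p$ is $G$-invariant and $\varphi(x;y)\in\mathcal{L}(G)$ with $b\equiv_G b'$, then $\varphi(x;b)\in p\iff\varphi(x;b')\in p$, so $\delta_p(\varphi(x;b))=\delta_p(\varphi(x;b'))$. Injectivity is immediate, since $\delta_p=\delta_q$ forces $\varphi\in p\iff\varphi\in q$ for all $\varphi$. For continuity, observe that the preimage of a subbasic open set $\{\mu:r<\mu(\varphi(x))<s\}$ is $\{p:r<\delta_p(\varphi(x))<s\}$, and since $\delta_p(\varphi)\in\{0,1\}$ this is one of $\emptyset$, $\{p:\varphi(x)\in p\}$, $\{p:\neg\varphi(x)\in p\}$, or the whole space, each of which is clopen; hence $\delta$ is continuous. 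As $S_x^{\inva}(\mathcal{G},G)$ is a closed, hence compact, subspace of $S_x(\mathcal{G})$ and $\mathfrak{M}_x^{\inva}(\mathcal{G},G)$ is Hausdorff, a continuous injection is automatically a homeomorphism onto its image, which gives the embedding. (Alternatively one may simply restrict to the subspace the fact, recalled earlier, that $p\mapsto\delta_p$ is a closed embedding of $S_x(\mathcal{G})$ into $\mathfrak{M}_x(\mathcal{G})$.)

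For multiplicativity, fix $p,q$ and a formula $\varphi(x,\overline{c})\in\mathcal{L}_x(\mathcal{G})$, and set $\varphi'(x,y,\overline{c})=\varphi(x\cdot y,\overline{c})$ as in the section conventions. Since $p$ is invariant, $\delta_p$ is invariant, so by Proposition \ref{prop: gen prod ext type prod}(2) the pair $(\delta_p,(\delta_q)_y)=(\delta_p,\delta_{q_y})$ is Borel; in particular $(\delta_p,\delta_q)$ is $*$-Borel and $\delta_p*\delta_q$ is well-defined. Unwinding Definition \ref{def: conv} and applying Proposition \ref{prop: gen prod ext type prod}(3) together with Remark \ref{rem: tilde otimes on types}, I get
\begin{equation*}
\delta_p*\delta_q(\varphi(x,\overline{c})) = \delta_p\,\tilde{\otimes}\,\delta_{q_y}(\varphi'(x,y,\overline{c})) = \delta_{p\otimes q_y}(\varphi(x\cdot y,\overline{c})),
\end{equation*}
which equals $1$ exactly when $\varphi(x\cdot y,\overline{c})\in p\otimes q_y$, i.e.\ (by Fact \ref{fac: product of inv types}) when $\models\varphi(a\cdot b,\overline{c})$ for $(a,b)\models p\otimes q$ in a larger monster model. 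By the definition of $p*q$ in Fact \ref{NTM} this happens precisely when $\varphi(x,\overline{c})\in p*q$, i.e.\ when $\delta_{p*q}(\varphi(x,\overline{c}))=1$. As both sides take values in $\{0,1\}$, they agree on every formula, so $\delta_p*\delta_q=\delta_{p*q}$.

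I expect the only genuine care to lie in the multiplicativity computation, specifically in matching the renamed variable $y$ and the realization $(a,b)$ of $p\otimes q_y$ to the definition of $p*q$, and in confirming well-definedness of $\delta_p*\delta_q$ via the extended product. Since these are exactly the situations covered by Proposition \ref{prop: gen prod ext type prod} and Fact \ref{fac: product of inv types}, the main obstacle is conceptual clarity rather than any real computational difficulty.
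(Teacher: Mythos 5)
Your proof is correct and follows essentially the same route as the paper: the multiplicativity is the identical chain of equalities through Definition \ref{def: conv}, Proposition \ref{prop: gen prod ext type prod}(3), and the definition of $p*q$ from Fact \ref{NTM}, just unwound step by step. The only difference is that you spell out the embedding part (invariance of the image, injectivity, clopen preimages, compact-to-Hausdorff), which the paper dismisses as ``clearly a topological embedding.''
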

\begin{proof}
Clearly $\delta$ is a topological embedding. Now let $\varphi(x) \in \mathcal{L}_{x}(\mathcal{G})$ be arbitrary, then by Proposition \ref{prop: gen prod ext type prod}(3) we have   
\begin{equation*}
\delta_{p * q}(\varphi(x)) = \delta_{p_{x} \otimes q_{y}}(\varphi(x\cdot y)) = \delta_{p_{x}} \tilde{\otimes} \delta_{q_{y}}(\varphi(x \cdot y)) = \delta_{p} * \delta_{q}(\varphi(x)). 
\end{equation*} 
\end{proof}

\begin{remark}\label{rem: ast on types}
	As in Remark \ref{rem: tilde otimes on types}, given $p,q \in S_x(\mathcal{G})$ we say that $(p,q)$ is $\ast$-Borel if $(\delta_p, \delta_q)$ is $\ast$-Borel, and in this case denote by $p \ast q$ the type $r \in S_x(\mathcal{G})$ such that $\delta_r = \delta_p \ast \delta_q$ --- by Proposition \ref{Newelski} this extends the operation on invariant types from Fact \ref{NTM}.
\end{remark}

The next lemma follows by straightforward computations.
\begin{proposition}\label{prop: calc conv} Let $\mu, \mu_1, \ldots, \mu_n, \nu_1, \ldots, \nu_m \in \mathfrak{M}_x(\mathcal{G})$ be arbitrary, and assume that the pairs $(\mu_i, \nu_j)$ are $*$-Borel for all $1 \leq i \leq n, 1\leq j \leq m$. Let $a, b, a_1, \ldots, a_n \in \mathcal{G}$ and $r_1, \ldots, r_n, s_1, \ldots, s_m \in \mathbb{R}_{\geq 0}$ be such that $\sum_{i=1}^{n} r_i = \sum_{j=1}^{m} s_j = 1$. Then:
\begin{enumerate}
   \item $\mu * \delta_e = \delta_{e} * \mu = \mu$,
    \item $\delta_{a} * \delta_{b} = \delta_{ab}$,
    \item $(\delta_{a} * \mu )(\varphi(x)) = \mu(\varphi(a \cdot x))$ for any $\varphi(x) \in \mathcal{L}_x(\cU)$,
    \item $\left(\sum_{i=1}^n r_i \cdot \mu_i \right) * \left(\sum_{j=1}^m s_j \cdot\nu_j \right) = \sum_{i,j = 1}^{n,m}r_i\cdot s_j \cdot (\mu_i * \nu_j)$,
    \item $\left( \left(\sum_{i =1}^n r_i \cdot \delta_{a_i} \right) * \mu  \right)(\varphi(x)) = \sum_{i=1}^n r_i \cdot  \mu \left(\varphi(a_i \cdot x) \right)$ for any $\varphi(x) \in \mathcal{L}_x(\cU)$.
\end{enumerate}
\end{proposition}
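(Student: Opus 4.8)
The plan is to reduce everything to the definition of convolution (Definition \ref{def: conv}) together with the identification of $\tilde\otimes$ on Dirac measures provided by Proposition \ref{prop: gen prod ext type prod}, proving the items in the order (2), (3), (1), (4), (5), so that each later item can invoke the earlier ones.

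For (2), I would unwind $\delta_{a} * \delta_{b}(\varphi(x)) = \delta_{a} \tilde\otimes (\delta_{b})_{y}(\varphi(x\cdot y))$ and apply Proposition \ref{prop: gen prod ext type prod}(3): this product equals $1$ precisely when $\varphi(x\cdot y) \in \tp(a,b/\cU)$, i.e.\ when $\cU\models\varphi(a\cdot b)$, which is exactly $\delta_{ab}(\varphi(x))$. For (3), the point is that $\delta_{a}$ is definable (being a realized type), so by Proposition \ref{prop: gen prod ext type prod}(1) the pair $(\delta_{a},\mu)$ is $*$-Borel with $\delta_{a}\tilde\otimes\mu_{y} = \delta_{a}\otimes\mu_{y}$, and the level function $F_{\delta_{a}}^{\varphi'}$ is the characteristic function of the clopen set defined by $\varphi(a\cdot y)$; integrating it against $\mu_{y}$ yields $\mu_{y}(\varphi(a\cdot y)) = \mu(\varphi(a\cdot x))$. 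Item (1) then falls out: the left identity is (3) with $a=e$, and the right identity follows because $\supp((\delta_{e})_{y}|_{G})$ is the single point $\tp(e/G)$, so the integral collapses to $F_{\mu}^{\varphi'}(\tp(e/G)) = \mu(\varphi(x\cdot e)) = \mu(\varphi(x))$.

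The substantive step is (4). Writing $\mu := \sum_{i} r_{i}\mu_{i}$ and $\nu := \sum_{j} s_{j}\nu_{j}$, I would first fix a single small model $G$ witnessing that every $(\mu_{i},(\nu_{j})_{y},\varphi')$ is Borel, and then verify that $(\mu,\nu)$ is $*$-Borel. Here three facts combine: $F_{\mu}^{\varphi'} = \sum_{i} r_{i} F_{\mu_{i}}^{\varphi'}$ by additivity of measures; $\supp(\nu_{y}|_{G}) = \bigcup_{j:\, s_{j}>0}\supp((\nu_{j})_{y}|_{G})$ is a finite union of closed sets, so a function Borel on each piece is Borel on the union, giving condition (3) of Definition \ref{Borel}; and for condition (2), any $q\in\supp(\nu_{y}|_{G})$ lies in $\supp((\nu_{j_{0}})_{y}|_{G})$ for some $j_{0}$, which lets me transfer the invariance of each $\mu_{i}$ along $q$ and conclude $\mu(\varphi(x,d)) = \mu(\varphi(x,d'))$ for $d,d'\models q$. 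Having established $*$-Borelness, I would compute $\mu*\nu(\varphi(x)) = \int_{\supp(\nu_{y}|_{G})} F_{\mu}^{\varphi'}\, d\nu_{G}$ using $\nu_{y}|_{G} = \sum_{j} s_{j} (\nu_{j})_{y}|_{G}$ and the fact that each $(\nu_{j})_{y}|_{G}$ is concentrated on $\supp((\nu_{j})_{y}|_{G})$; by linearity of the integral this decomposes as $\sum_{i,j} r_{i} s_{j} \int_{\supp((\nu_{j})_{y}|_{G})} F_{\mu_{i}}^{\varphi'}\, d(\nu_{j})_{G} = \sum_{i,j} r_{i} s_{j}\,(\mu_{i}*\nu_{j})(\varphi(x))$. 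Finally (5) is the specialization of (4) with $\nu_{1} = \mu$, $m=1$ and each $\mu_{i} = \delta_{a_{i}}$, combined with (3); the pairs $(\delta_{a_{i}},\mu)$ are $*$-Borel since each $\delta_{a_{i}}$ is definable.

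I expect the only real friction to be the bookkeeping in (4): confirming that restricting to supports does not disturb additivity of the integral (so that integrating over the larger support $\supp(\nu_{y}|_{G})$ against $(\nu_{j})_{y}|_{G}$ agrees with integrating over $\supp((\nu_{j})_{y}|_{G})$ against $(\nu_{j})_{G}$) and that condition (2) of Definition \ref{Borel} survives the passage to the union of supports. Both are routine once the single common witnessing model $G$ is fixed, but they are where the extended product's restriction to supports must be handled with care rather than glossed over.
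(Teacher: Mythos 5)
Your proposal is correct, and it matches the paper exactly in spirit: the paper offers no proof at all, stating only that the proposition ``follows by straightforward computations,'' and your argument is precisely those computations carried out in full. The details you supply — $*$-Borelness of $(\mu,\delta_e)$ via the singleton support $\{\tp(e/G)\}$, the finite-union decomposition $\supp(\nu_y|_G)=\bigcup_{s_j>0}\supp((\nu_j)_y|_G)$ in (4), and the compatibility of integration over the larger support with each $(\nu_j)_G$ — are all handled correctly.
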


Finally, we observe that the following properties of measures are preserved under convolution.
\begin{proposition}\label{prop: pros pres under conv} Let $\mu,\nu \in \mathfrak{M}_{x}(\mathcal{G})$ be such that $(\mu,\nu)$ is $*$-Borel.\begin{enumerate}
\item If $\mu,\nu$ are definable over $G \prec \mathcal{G}$, then $\mu * \nu$ is definable over $G$. 
\item If $\mu,\nu$ are finitely satisfiable over $G \prec \mathcal{G}$, then $\mu * \nu$ is finitely satisfiable over $G$. 
\item If $\mu,\nu$ are finitely approximated over $G \prec \mathcal{G}$, then $\mu*\nu$ is finitely approximated over $G$. 
\item If $\mu(x =b) =0$ for every $b$ in $\mathcal{G}$, then $\mu * \nu( x = b) = 0$ for every $b \in \mathcal{G}$. 
\end{enumerate} 
\end{proposition}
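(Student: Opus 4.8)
The four statements are all read off from the explicit formula $\mu * \nu(\varphi(x,\overline{c})) = \int_{\supp(\nu_{y}|_{G})} F_{\mu}^{\varphi'}\,d\nu_{G}$ of Definition \ref{def: conv}, where $\varphi'(x,y,\overline{c}) = \varphi(x \cdot y,\overline{c})$; since $(\mu,\nu)$ is assumed $*$-Borel the product is defined, and I only need to track the relevant property through this integral. Item (4) is immediate: taking $\varphi(x,b)$ to be $x = b$, in a group we have $\varphi'(x,y,b) \equiv (x = b \cdot y\inv)$, so $F_{\mu}^{\varphi'}(q) = \mu(x = b \cdot d\inv) = 0$ for any $d \models q$ by hypothesis (as $b \cdot d\inv \in \mathcal{G}$), whence the integral vanishes.

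For (1), fix $\varphi(x,z) \in \cL_{xz}(G)$ and $\varepsilon > 0$, and apply the definability characterization Fact \ref{fac: chars of def meas}(2) to $\mu$ and the formula $\varphi'(x;y,z)$, with $(y,z)$ as the parameter block. This yields a $G$-definable partition $\{\Psi_{k}(y,z)\}_{k}$ of $\mathcal{G}^{yz}$ and constants $i_{k} \in [0,1]$ such that $\mu(\varphi(x\cdot d, c)) \approx_{\varepsilon} \sum_{k} i_{k}\,\chi_{\Psi_{k}}(d,c)$ uniformly in $(d,c)$. Integrating in the $y$-variable against $\nu$ and using linearity of the integral gives $\mu*\nu(\varphi(x,c)) \approx_{\varepsilon} \sum_{k} i_{k}\,\nu(\Psi_{k}(y,c))$ uniformly in $c$. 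Each map $c \mapsto \nu(\Psi_{k}(y,c))$ factors through $S_{z}(G)$ continuously by definability of $\nu$, so $c \mapsto \mu*\nu(\varphi(x,c))$ is a uniform limit (as $\varepsilon \to 0$) of continuous functions of $\tp(c/G)$; hence it depends only on $\tp(c/G)$ and defines a continuous function on $S_{z}(G)$, i.e.\ $\mu*\nu$ is definable over $G$.

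For (2), suppose $\mu*\nu(\varphi(x,\overline{c})) > 0$ and choose the small model $G'$ witnessing $*$-Borel-ness of $(\mu,\nu_{y},\varphi')$ so that $G \cup \overline{c} \subseteq G'$ (enlarging if necessary). Positivity of $\int_{\supp(\nu_{y}|_{G'})} F_{\mu}^{\varphi'}\,d\nu_{G'}$ forces the nonnegative integrand to be nonzero on a set of positive measure, so there is $q \in \supp(\nu_{y}|_{G'})$ with $F_{\mu}^{\varphi'}(q) > 0$; picking $d \models q$ in $\mathcal{G}$ we get $\mu(\varphi(x\cdot d,\overline{c})) > 0$. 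Finite satisfiability of $\mu$ in $G$ now yields $a \in G^{x}$ with $\models \varphi(a\cdot d,\overline{c})$, so the $G'$-formula $\varphi(a\cdot y,\overline{c})$ lies in $q$; since $q \in \supp(\nu_{y}|_{G'})$ this gives $\nu(\varphi(a\cdot x,\overline{c})) > 0$, and finite satisfiability of $\nu$ in $G$ produces $b \in G^{x}$ with $\models \varphi(a\cdot b,\overline{c})$. As $G \prec \mathcal{G}$ is closed under the group operation, $a\cdot b \in G^{x}$ witnesses finite satisfiability of $\mu*\nu$.

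For (3), I approximate both factors uniformly and multiply. Fixing $\varphi(x,z)$ and $\varepsilon$, finite approximability of $\mu$ applied to $\varphi'(x;y,z)$ gives $\overline{a} = (a_{1},\dots,a_{n}) \in (G^{x})^{n}$ with $\mu(\varphi(x\cdot d,c)) \approx_{\varepsilon/2} \Av_{\overline{a}}(\varphi(x\cdot d,c))$ for all $(d,c)$; integrating against $\nu$ yields $\mu*\nu(\varphi(x,c)) \approx_{\varepsilon/2} \tfrac{1}{n}\sum_{i} \nu(\varphi(a_{i}\cdot y,c))$. Finite approximability of $\nu$ applied to the formula $\varphi(u\cdot y, z)$, with $(u,z)$ as parameters, gives a single $\overline{b} = (b_{1},\dots,b_{m}) \in (G^{x})^{m}$ valid for all $u$, so $\nu(\varphi(a_{i}\cdot y,c)) \approx_{\varepsilon/2} \tfrac{1}{m}|\{j : \models \varphi(a_{i}\cdot b_{j},c)\}|$ simultaneously for every $i$. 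Chaining the two estimates gives $\mu*\nu(\varphi(x,c)) \approx_{\varepsilon} \Av_{\overline{a}\cdot\overline{b}}(\varphi(x,c))$ uniformly in $c$, where $\overline{a}\cdot\overline{b} := (a_{i}\cdot b_{j})_{i,j} \in (G^{x})^{nm}$, establishing finite approximability of $\mu*\nu$ over $G$. The one point requiring care throughout is the uniformity of the approximations in the external parameter $c$ (and, in (3), the use of a single second tuple $\overline{b}$ working simultaneously for all the group elements $a_{i}$); once this is secured, linearity of the integral and closure of $G$ under multiplication do the rest.
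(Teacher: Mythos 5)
Your proof is correct and takes essentially the same route as the paper: item (4) reproduces the paper's own computation (rewriting $x \cdot y = b$ as $x = b \cdot y^{-1}$ so that $F_{\mu}^{\varphi'}$ vanishes identically on the support and the integral is $0$), while for (1)--(3) the paper simply defers to the standard preservation-under-$\otimes$ arguments from the cited references, which are precisely the definable-partition approximation, the two-step finite-satisfiability pull-back through the support, and the doubled-average $(a_i \cdot b_j)_{i,j}$ argument that you spell out. The uniformity-in-$c$ points you flag (and the single tuple $\bar{b}$ working for all $a_i$ via absorbing $u$ into the parameter block) are exactly the ``slight variations'' the paper alludes to, so there is no gap.
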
 

\begin{proof} Claims (1), (2) and (3) are slight variations on the preservation of the corresponding properties with respect to $\otimes$ (see e.g.~\cite[Lemma
1.6]{NIP3} or \cite[Proposition 2.6]{CG} for (1) and (2), and \cite[Proposition 2.13]{NIP5} or \cite[Proposition 2.10]{CG} for (3)).

(4) Let $b \in \mathcal{G}$ be arbitrary, let $\varphi(x) \in \cL_x(\cU)$ be the formula ``$x=b$'' and let $G \prec \mathcal{G}$  witness that $\left(\mu, \nu_y, \varphi' \right)$ is Borel. Then 
\begin{equation*} \mu * \nu ( x =b ) = \mu \tilde{\otimes} \nu_{y} ( x \cdot y = b) = \int_{\supp(\nu|_{G})} F^{\varphi'}_{\mu} d\nu_{G}(y).
\end{equation*}
And for $q \in S(\nu|_{G})$, $F_{\mu}^{\varphi'}(q) = \mu(x \cdot c = b)$ for some/any $c \models q$ in $\mathcal{G}$. Then, $\mu(x \cdot c = b) = \mu(x =b c^{-1}) = 0$
by assumption. Therefore, $\int F^{\varphi'}_{\mu} d\nu_{G} = \int 0 d\nu_{G} = 0$.
\end{proof} 

\subsection{Idempotent measures}\label{sec: idemp}

We continue working in a theory expanding a group, and begin with some standard definitions.

\begin{definition} Let $\mu \in \mathfrak{M}_x(\mathcal{G})$. 
\begin{enumerate}
\item We say that $\mu$ is \emph{idempotent} if  $\mu$ is $*$-Borel and $\mu*\mu = \mu$.
\item We say that $\mu$ is \emph{right-invariant} if for any formula $\varphi(x) \in \mathcal{L}_x(\mathcal{G})$ and any $a \in \mathcal{G}$, we have $\mu(\varphi(x)) = \mu(\varphi(x\cdot a))$.
\end{enumerate}

\end{definition}
\begin{definition}
	Let $\mathcal{H}$ be a type-definable subgroup of $\mathcal{G}$, where $H(x)$ is the partial type defining the domain of $\mathcal{H}$ (which we associate with the closed set of types implying $H$).
	Then $\mathcal{H}$ is \emph{definably amenable} if there exists a measure $\mu \in \mathfrak{M}_x(\mathcal{G})$ such that $\mu(H(x)) = 1$, and for any $\varphi(x) \in \mathcal{L}_x(\mathcal{G})$ and $a \in \mathcal{H}$ we have $\mu(\varphi(x)) = \mu(\varphi(x \cdot a))$. In this case, we call $\mu$ \emph{right $\mathcal{H}$-invariant}.
\end{definition}

\begin{remark}
	For NIP groups, existence of a right-invariant measure on $\mathcal{H}$ is equivalent to the existence of a left invariant measure on $\mathcal{H}$ (as well as a bi-invariant measure, see \cite[Lemma 6.2]{CS}).
\end{remark}

\begin{proposition}\label{wow} Let $\mathcal{H}$ be a type-definable, definably amenable subgroup of $\mathcal{G}$, defined by a partial type $H(x)$. Suppose that $\mu \in \mathfrak{M}_{x}(\mathcal{G})$ is right $\mathcal{H}$-invariant. Then $\mu$ is idempotent. Moreover, if $\nu$ is another measure such that $\nu(H(x)) = 1$, then $(\mu,\nu)$ is $*$-Borel and $\mu * \nu = \mu$. 
\end{proposition}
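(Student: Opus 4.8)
The plan is to reduce the entire statement to a single observation: on the support of $\nu$, the level function $F_{\mu}^{\varphi'}$ is \emph{constant}, equal to $\mu(\varphi(x,\overline{c}))$. This one fact simultaneously verifies that $(\mu,\nu)$ is $*$-Borel (conditions (2) and (3) of Definition \ref{Borel}) and evaluates the convolution integral. First I would fix an arbitrary $\varphi(x,\overline{c}) \in \mathcal{L}_x(\mathcal{G})$ and choose a small $N \prec \mathcal{G}$ containing $\overline{c}$ together with the (small set of) parameters over which $\mathcal{H}$ is type-definable, so that $H$ is a partial type over $N$.

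Next I would locate the support. Since $\nu(H(x)) = 1$ and $[H]$ is a closed subset of $S_x(\mathcal{G})$, Proposition \ref{prop: supp 1}(2) gives $\supp(\nu) \subseteq [H]$. By Proposition \ref{coheir}(1), $\supp(\nu_y|_N) = r(\supp(\nu_y))$ for the restriction map $r$, so every $q \in \supp(\nu_y|_N)$ is the restriction to $N$ of some global type implying $H$. Because $H$ is over $N$, this forces $H \subseteq q$, and hence every realization $d \models q$ in $\mathcal{G}$ satisfies $d \in \mathcal{H}$. This is the step where restricting the integral to the support (rather than to all of $S_y(N)$) does the real work.

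The key computation is then immediate from right $\mathcal{H}$-invariance: for $q \in \supp(\nu_y|_N)$ and any $d \models q$ we have $d \in \mathcal{H}$, so $\mu(\varphi(x \cdot d, \overline{c})) = \mu(\varphi(x,\overline{c}))$. Thus $F_{\mu}^{\varphi'}(q) = \mu(\varphi(x,\overline{c}))$ for every such $q$, independent of the choice of $d$. This gives well-definedness (condition (2)) and Borel measurability (a constant function, condition (3)), while condition (1) holds by $\overline{c} \subseteq N$; as $\varphi$ was arbitrary, $(\mu,\nu)$ is $*$-Borel. Integrating the constant and using $\nu_N(\supp(\nu_y|_N)) = 1$ (Proposition \ref{prop: supp 1}(2)) yields $\mu * \nu(\varphi(x,\overline{c})) = \mu(\varphi(x,\overline{c}))$, i.e.\ $\mu * \nu = \mu$. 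Idempotency of $\mu$ is then the special case $\nu = \mu$, which is legitimate since $\mu(H(x)) = 1$ is built into right $\mathcal{H}$-invariance.

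I expect the only genuinely delicate point to be the passage to the support. Because $\mu$ is assumed neither invariant nor Borel-definable, the function $q \mapsto \mu(\varphi(x \cdot d,\overline{c}))$ could be ill-defined or non-Borel on all of $S_y(N)$; it is precisely the containment $\supp(\nu) \subseteq [H]$ that drives realizations into $\mathcal{H}$ and lets right-invariance collapse $F_{\mu}^{\varphi'}$ to a constant. This is exactly why the extended product $\tilde{\otimes}$ is needed here, as flagged in the introduction. The remaining care is purely bookkeeping: making sure $N$ absorbs the parameters of $H$, so that $H \subseteq q$ really holds for every $q \in \supp(\nu_y|_N)$.
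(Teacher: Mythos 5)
Your proposal is correct and follows essentially the same route as the paper's proof: both reduce everything to showing that $F_{\mu}^{\varphi'}$ is constant (equal to $\mu(\varphi(x,\overline{c}))$) on the support of $\nu$ restricted to a small model absorbing the parameters of $H$ and $\varphi$, which simultaneously yields $*$-Borelness and evaluates the integral. The only cosmetic difference is that you obtain $q \vdash H$ by first noting $\supp(\nu) \subseteq [H]$ globally and pushing down via Proposition \ref{coheir}(1), whereas the paper argues directly in $S_y(G)$ by contradiction using openness of the complement of $[H]$ --- these are interchangeable.
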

\begin{proof}
We show that for any measure $\nu \in \mathfrak{M}_{x}(\mathcal{G})$ such that $\nu(H(x)) = 1$,  $(\mu,\nu)$ is $*$-Borel and $\mu * \nu = \mu$. For ease of notation, we will identify $\nu$ with $\nu_{y}$. Fix a formula $\varphi(x) \in \mathcal{L}_{x}(\mathcal{G})$. Let $G$ be a small elementary submodel of $\mathcal{G}$ containing the parameters of $H(x)$ and $\varphi(x)$. Fix some $q \in \supp(\nu|_{G}) \subseteq S_{y}(G)$, then $q \vdash H(y)$. If not, then $q \in S_{y}(G) \backslash H(y)$. Since $H(y)$ is closed, $S_{y}(G) \backslash H(y)$ is open, hence $S_{y}(G) \backslash H(y) = \bigcup_{i \in I} \psi_i(y)$ for some index set $I$ and $\psi_i \in \mathcal{L}_y(G)$. Then $\psi_i(y) \in q$ for some $i$ and since $q \in \supp(\nu|_{G})$, we know that $\nu(\psi_i(y)) > 0$. But this is a contradiction since $\nu(H(y)) = 1$ and $\psi_i(y)$ is disjoint from $H(y)$. Therefore, if $b \in \mathcal{G}$ and $b \models q$, then $b \in \mathcal{H}$. Now, the function $F_{\mu,G}^{\varphi'}$ is constant on $\supp(\nu|_{G})$ since $F_{\mu,G}^{\varphi'}(q) = \mu(\varphi(x \cdot b)) = \mu(\varphi(x))$ by right $\mathcal{H}$-invariance of $\mu$, hence $(\mu,\nu)$ is $*$-Borel. And $\mu * \nu = \mu$ as
\begin{equation*} \mu * \nu(\varphi(x)) = \int_{\supp(\nu|_{G})} F_{\mu,G}^{\varphi'} d\nu_{G} = \int_{\supp(\nu|_{G})} \mu(\varphi(x)) d\nu_{G} = \mu(\varphi(x)).
\end{equation*} 
In particular, $(\mu,\mu)$ is Borel and $\mu * \mu = \mu$.
\end{proof}

The expectation is that in tame situations, all idempotent measures are of this form for some type-definable subgroup. We will show that this is indeed the case when $\mathcal{G}$ is a stable group in Section \ref{sec: stable groups}, but for now we discuss some examples in which idempotent measures arise. 

If $\mathcal{G}$ is a definably amenable group, and $\mathcal{H}$ is a type-definable subgroup of finite index (hence definable), then $\mathcal{H}$ is definably amenable (if $\mu$ is a right-invariant measure on $\mathcal{G}$, then $\mu_{\mathcal{H}}(\varphi(x)) := [\mathcal{G} : \mathcal{H}] \cdot \mu(\varphi(x) \cap H(x))$ gives a right-invariant measure on $\mathcal{H}$). This generalizes to type-definable subgroups of bounded index when $\mathcal{G}$ is NIP.

\begin{proposition}\label{prop: bdd index def am NIP} Assume that $\mathcal{G}$ is definably amenable and NIP, and let $\mathcal{H}$ be a type-definable subgroup of $\mathcal{G}$ of bounded index. Then $\mathcal{H}$ is also definably amenable.
\end{proposition}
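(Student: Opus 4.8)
The plan is to reduce the problem to the compact quotient $K := \mathcal{G}/\mathcal{G}^{00}$ and to transport the Haar measure of an appropriate closed subgroup back to $\mathcal{G}$. Since $T$ is NIP, the smallest type-definable subgroup of bounded index $\mathcal{G}^{00}$ exists, and as $\mathcal{H}$ is itself type-definable of bounded index we have $\mathcal{G}^{00} \subseteq \mathcal{H}$. Fix a small model $M$ over which both $\mathcal{G}^{00}$ and $\mathcal{H}$ are type-definable, let $\pi \colon \mathcal{G} \to K$ be the quotient map, and recall that in the logic topology $K$ is a compact Hausdorff topological group and that $L := \mathcal{H}/\mathcal{G}^{00}$ is a closed (hence compact) subgroup of $K$, carrying a normalized Haar measure $h_L$. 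Let $\hat{\pi} \colon S_x(\mathcal{G}) \to K$ be the induced continuous map $\tp(a/\mathcal{G}) \mapsto \pi(a)$, which is equivariant for the right actions, $\hat{\pi}(p \cdot g) = \hat{\pi}(p)\,\pi(g)$. Since $\mathcal{G}^{00} \subseteq \mathcal{H}$, one checks that $\hat{\pi}^{-1}(L) = \{ p \in S_x(\mathcal{G}) : p \vdash H \}$, so any $\mu \in \mathfrak{M}_x(\mathcal{G})$ whose pushforward $\hat{\pi}_*\mu$ is concentrated on $L$ automatically satisfies $\mu(H) = 1$. Thus it suffices to build such a $\mu$ that is in addition right $\mathcal{H}$-invariant.

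The construction I would use mirrors the passage from f-generic types to invariant measures in \cite{CS}, with $h_L$ playing the role that the Haar measure of $K$ plays for $\mathcal{G}$ itself. Concretely, as $\mathcal{G}$ is definably amenable and NIP it carries a (right-)f-generic global type; since the f-generic types form a nonempty set invariant under right translation and $\hat{\pi}$ is equivariant and onto, we may translate such a type to one, say $p$, with $\hat{\pi}(p) = e_K$, i.e.\ $p \vdash \mathcal{G}^{00} \subseteq \mathcal{H}$. The $\mathcal{G}$-translates of $p$ supply, fiber by fiber over $K$, the data needed to integrate: informally one sets $\mu(\varphi) = \int_{L} (p \cdot g_k)(\varphi)\, dh_L(k)$, where $g_k \in \mathcal{G}$ is a lift of $k \in L$. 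Granting that this yields a well-defined Keisler measure, concentration on $\mathcal{H}$ is immediate from $h_L(L) = 1$ together with $\hat{\pi}(p \cdot g_k) = k \in L$, and right $\mathcal{H}$-invariance follows formally: for $a \in \mathcal{H}$ right translation by $a$ shifts the fiber over $k$ to the fiber over $k\,\pi(a)$, and since $\pi(a) \in L$ this shift is absorbed by the right-invariance of $h_L$ under $L$. This is the precise sense in which ``the analysis from \cite{CS} extends'': their construction of an invariant measure out of an f-generic type is repeated, integrating over the compact group $L$ against $h_L$ rather than over $K$ against its Haar measure.

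The main obstacle, and the only genuinely technical step, is the well-definedness of this integral. Two points require care. First, $p \cdot g_k$ depends a priori on the chosen lift $g_k$ and not merely on $k$, and the map $k \mapsto (p \cdot g_k)(\varphi)$ must be shown $h_L$-measurable; both are handled exactly as in \cite{CS}, using that $p$ is f-generic (so, after possibly enlarging $M$, all its right $\mathcal{G}$-translates are non-forking, hence invariant over $M$ since $T$ is NIP). Second, one cannot shortcut the argument by disintegrating a fixed right-invariant measure $\mu_0$ on $\mathcal{G}$ along $\hat{\pi}$ and re-weighting its fibers by $h_L$: the subgroup $L$ is typically null for the Haar measure $h = \hat{\pi}_* \mu_0$ on $K$ (for instance when $\mathcal{H} = \mathcal{G}^{00}$ and $K$ is infinite), so the conditional fiber measures of $\mu_0$ are undefined over $L$. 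This is exactly why the fiber datum must be produced directly from an f-generic type concentrated on $\mathcal{G}^{00}$ rather than extracted from an ambient invariant measure, and it is where the NIP hypothesis and the f-generic machinery of \cite{CS} are essential. Once $\mu$ is constructed, verifying that it is right $\mathcal{H}$-invariant with $\mu(H) = 1$ is the formal computation sketched above, completing the proof that $\mathcal{H}$ is definably amenable.
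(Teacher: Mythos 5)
Your proposal is correct and takes essentially the same route as the paper: the paper likewise picks a strongly $f$-generic type $p$ with $p \vdash \mathcal{G}^{00}$ (existence from definable amenability plus NIP, arranged via right translation), takes the normalized Haar measure $\nu$ on the closed subgroup $\pi(\mathcal{H}) \subseteq \mathcal{G}/\mathcal{G}^{00}$ (your $L$ and $h_L$), and sets $\mu_{p,\nu}(\varphi) := \nu\left(\{\bar{g} \in \pi(\mathcal{H}) : \varphi \in p \cdot \bar{g}\}\right)$, which is exactly your integral $\int_L (p \cdot g_k)(\varphi)\, dh_L(k)$ since types only take values $0$ and $1$. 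The technical points you defer to \cite{CS} (independence of the lift $g_k$, via $\Stab(p) \supseteq \mathcal{G}^{00}$ for $f$-generic types, and Borel measurability of the relevant sets) are handled in the paper by the same machinery, citing \cite{NIP2}.
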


Proposition \ref{prop: bdd index def am NIP} follows from a slightly generalized construction of the $\mathcal{G}$-invariant measures $\mu_p$ from \cite{NIP1, NIP2, CS} in NIP groups. We will use some properties of the absolute type-definable connected component $\mathcal{G}^{00}$, the intersection of all type-definable subgroups of $\mathcal{G}$ of bounded index, and refer to the aforementioned texts for further details. To be compatible with our set up for convolutions, we work with \emph{$\mathcal{G}$ acting on the right}.
By NIP, $\mathcal{G}^{00}$ is a normal subgroup type-definable over $\emptyset$, and let $G \prec \mathcal{G}$ be an arbitrary small model. As usual,  $\pi: \mathcal{G} \to \mathcal{G}/\mathcal{G}^{00}$ is the surjective group homomorphism with $\pi(g)$ depending only on $\tp(g/G)$. Then $\mathcal{G}/\mathcal{G}^{00}$ is a compact Hausdorff topological group with respect to the logic topology,  i.e.~a subset $X$ of $\mathcal{G}/\mathcal{G}^{00}$ is closed if and only if $\pi^{-1}(X)$ is type-definable, if and only if $\pi^{-1}(X)$ is type-definable over $G$. The induced map $S_x(G) \to \mathcal{G}/\mathcal{G}^{00}$ is continuous. With respect to this topology, closed subgroups of $\mathcal{G}/\mathcal{G}^{00}$ are in a bijective correspondence with type-definable subgroups of $\mathcal{G}$ of bounded index (equivalently, containing $\mathcal{G}^{00}$). Namely, if $K$ is a closed subgroup of $\mathcal{G}/\mathcal{G}^{00}$, then $\mathcal{H} := \pi^{-1}(K)$ is a type-definable set containing $\mathcal{G}^{00} = \pi^{-1}(e_K)$, and is a group since $\pi$ is a group homomorphism (and vice versa). Also, if $\mathcal{H} \subseteq \mathcal{G}$ is type-definable, then $K := \pi(\mathcal{H}) \subseteq \mathcal{G}/\mathcal{G}^{00}$ is a closed subgroup (as $\pi:S_x(G) \to \mathcal{G}/\mathcal{G}^{00}$ is a closed map).

Recall that a global type $p \in S_x(\mathcal{G})$ is \emph{strongly $f$-generic} over $G$ if $p \cdot g$ is $G$-invariant for every $g \in \mathcal{G}$. If $\mathcal{G}$ is definably amenable and $G$ is an arbitrary small model, then there exists a type $p$ strongly $f$-generic over $G$ (see \cite{NIP2}). Moreover, as every right translate of a strong $f$-generic over $G$ is again a strong $f$-generic over $G$, we may assume $p(x) \vdash \mathcal{G}^{00}(x)$.
%
%\begin{definition}
% Let $p \in S_x(\mathcal{G})$ be a strongly $f$-generic type. Let $\nu$ be a Borel probability measure on $\mathcal{G}/\mathcal{G}^{00}$. We define a function $\mu_{p,\nu}$ on definable subsets of $\mathcal{G}$ as 
% $$\mu_{p,\nu}(\varphi(x)) := \nu(A_{\varphi,p}).$$
%
%\end{definition}

\begin{proof}[Proof of Proposition \ref{prop: bdd index def am NIP}]
Let $K := \pi(\mathcal{H})$, then  $\pi^{-1}(K) = \mathcal{H}$ (by the fourth isomorphism theorem for groups), hence $K$ is a closed subgroup of $\mathcal{G}/\mathcal{G}^{00}$. Denote by $\nu$ the right-invariant Haar measure on Borel subsets of $K$ normalized by $\nu(K) = 1$.

Let $p \in S_x(\mathcal{G})$ be a strong $f$-generic over $G$ with $p(x) \vdash \mathcal{G}^{00}(x)$, so in particular $p \vdash \mathcal{H}$ and $p \cdot g = p$ for every $g \in \mathcal{G}^{00}$.
For a formula $\varphi(x)\in \mathcal{L}_x(\mathcal{G})$, let 
	$$A_{\varphi,p} := \left\{ \bar{g} \in K : \varphi(x) \in  p \cdot \bar{g} \right\}.$$
	
	Then $A_{\varphi,p}$ is a Borel subset of $K$ (as  $A_{\varphi,p} = K \cap \left\{ \bar{g} \in  \mathcal{G}/\mathcal{G}^{00}: \varphi(x) \in  p \cdot \bar{g} \right\}$, and the latter set is Borel by \cite[Proposition 5.6]{NIP2}).
	We define
	 $$\mu_{p,\nu}(\varphi(x)) := \nu(A_{\varphi, p}).$$
	 Then we have the following.
	 \begin{itemize}
	 	\item $\mu_{p,\nu}$ is a Keisler measure with $\mu_{p,\nu}(H) = 1$.
	 	
	 	It is easy to check that $\mu_{p,\nu}$ is a measure. And by regularity, $\mu_{p,\nu}(H) = \inf\{ \mu_{p,\nu}(\psi(x)) : H
	 	(x) \vdash \psi(x), \psi(x) \in \mathcal{L}_x(\mathcal{G}) \}$, and as $p \vdash H \vdash \psi$ for all such $\psi$, we have that $A_{\psi,p} = K$ by definition, hence $\mu_{p,\nu}(\psi) = 1$.
	 	\item $\mu_{p,\nu}$ is right $\mathcal{H}$-invariant (as $\mu_{p,\nu}(\varphi(x)\cdot g) = \nu(A_{\varphi \cdot g,p}) = \nu(A_{\varphi,p} \cdot \pi(g)) = \nu(A_{\varphi,p}) = \mu_{p,\nu}(\varphi(x))$ by right $K$-invariance of $\nu$, as $\pi(g) \in K$).
	 \end{itemize}
	 Hence $\mathcal{H}$ is definably amenable, witnessed by $\mu_{p,\nu}$.
\end{proof}

\begin{question}
Is Proposition \ref{prop: bdd index def am NIP}	true without the NIP assumption?
\end{question}

%
%\begin{corollary}\label{example} The following classes of measures are idempotent:
%\begin{enumerate}
%    \item Assume that $\mathcal{G}$ is definably amenable group with the right invariant measure $\mu$ and $\mathcal{H}$ is a finite index subgroup of $\mathcal{G}$, then the measure $\mu_H(\varphi(x)) = [\mathcal{G}:H] \cdot \mu(\varphi(x) \cap \mathcal{H})$ is idempotent. In particular, $\mu$ is idempotent as well. 
%    \item Moreover, assume that $\mathcal{H}$ is amenable as a discrete group and $\mu$ is a right invariant measure on $\mathcal{P}(H)$. Then, the Keisler measure $\mu_{H}(\varphi(x)) = \mu(\varphi(x) \cap \mathcal{H})$ is idempotent. 
%    \item If $\mathcal{H}$ is a finite subgroup of $\mathcal{G}$, then the measure $\frac{1}{|H|}\sum_{h \in H} \delta_{h}$ is idempotent. 
%\end{enumerate}
%\end{corollary}

Classification of measures supported on finite subsets of $\mathcal{G}$ follows  from Wendel's theorem.

\begin{proposition} \label{prop: finite realized support} If $\mu$ is a measure on $\mathcal{G}$ whose support is a finite collection of realized types, then $\mu$ is idempotent if and only if $\mu = \frac{1}{|\mathcal{H}|}\sum_{a \in \mathcal{H}} \delta_{a}$ for some finite subgroup $\mathcal{H}$ of $\mathcal{G}$. 
\end{proposition}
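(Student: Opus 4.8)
The plan is to prove both implications by reducing everything to a finite (hence compact) group, where Wendel's theorem \cite{Wendel} applies directly. Throughout I would write $\mu = \sum_{i=1}^{n} r_i \delta_{a_i}$ with $a_1, \dots, a_n \in \mathcal{G}$ pairwise distinct and $r_i > 0$, $\sum_i r_i = 1$, so that $\mathcal{H} := \{a_1, \dots, a_n\}$ is exactly $\supp(\mu)$ (identifying realized types with group elements). First I would note that $(\mu,\mu)$ is automatically $*$-Borel: choosing a small $G \prec \mathcal{G}$ containing all the $a_i$ together with the parameters of a given $\varphi(x,\bar c)$, the set $\supp(\mu_y|_G)$ consists of the finitely many realized types $\tp(a_i/G)$, so the level function $F^{\varphi'}_\mu$ is defined on a finite set and is trivially Borel. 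Hence the convolution is always defined in this situation and Proposition \ref{prop: calc conv} applies. For the $(\Leftarrow)$ direction, if $\mu = \frac{1}{|\mathcal{H}|}\sum_{a \in \mathcal{H}} \delta_a$ for a finite subgroup $\mathcal{H}$, then by Proposition \ref{prop: calc conv}(2),(4),
\begin{equation*}
\mu * \mu = \frac{1}{|\mathcal{H}|^2} \sum_{a, b \in \mathcal{H}} \delta_a * \delta_b = \frac{1}{|\mathcal{H}|^2} \sum_{a,b \in \mathcal{H}} \delta_{ab} = \frac{1}{|\mathcal{H}|} \sum_{c \in \mathcal{H}} \delta_c = \mu,
\end{equation*}
since for each fixed $c \in \mathcal{H}$ there are exactly $|\mathcal{H}|$ pairs $(a,b)$ with $ab = c$.

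For the $(\Rightarrow)$ direction, I would assume $\mu * \mu = \mu$ and again compute $\mu * \mu = \sum_{i,j} r_i r_j \delta_{a_i a_j}$ via Proposition \ref{prop: calc conv}(2),(4). Since all weights $r_i r_j$ are positive, $\mu * \mu(x = c) > 0$ precisely when $c$ lies in the product set $\mathcal{H}\cdot\mathcal{H}$, so comparing supports in $\mu * \mu = \mu$ yields $\mathcal{H}\cdot\mathcal{H} = \mathcal{H}$. The next step is the elementary observation that a finite nonempty subset $\mathcal{H}$ of a group with $\mathcal{H}\cdot\mathcal{H} = \mathcal{H}$ is a subgroup: for each $a \in \mathcal{H}$ the injective map $x \mapsto ax$ sends $\mathcal{H}$ into $\mathcal{H}\cdot\mathcal{H} = \mathcal{H}$ and hence is a bijection of the finite set $\mathcal{H}$, so $a\mathcal{H} = \mathcal{H}$; this forces $e \in \mathcal{H}$ and then $a^{-1}\in\mathcal{H}$, while closure under products is just $\mathcal{H}\cdot\mathcal{H} \subseteq \mathcal{H}$. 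Thus $\mathcal{H}$ is a finite subgroup of $\mathcal{G}$.

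It then remains to identify the weights. Here I would regard $\mathcal{H}$ as a finite, hence compact, topological group (with the discrete topology) and view $\mu$ as a regular Borel probability measure on $\mathcal{H}$. The key point is that for measures concentrated on $\mathcal{H}$ the definable convolution of Definition \ref{def: conv} coincides with the ordinary convolution on the group $\mathcal{H}$, since both are computed bilinearly from $\delta_a * \delta_b = \delta_{ab}$. Therefore $\mu$ is an idempotent regular Borel probability measure on the compact group $\mathcal{H}$, and Wendel's theorem \cite{Wendel} tells us its support is a compact subgroup on which $\mu$ is the Haar measure. As $\supp(\mu) = \mathcal{H}$, this says $\mu$ is the normalized Haar measure on $\mathcal{H}$, namely $\mu = \frac{1}{|\mathcal{H}|}\sum_{a \in \mathcal{H}}\delta_a$.

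The convolution computations are routine; the one step requiring genuine care — and the main conceptual obstacle — is the reduction to the classical setting. Wendel's theorem cannot be invoked on the ambient saturated group $\mathcal{G}$, so one must first extract from idempotency that the finite support is closed under multiplication (and hence, being finite, an honest subgroup that really is compact) before the classical theorem becomes available. Once that reduction is in place, Wendel does all the remaining work. As a self-contained alternative to Wendel for the last step, uniformity of the weights can be proved by hand: setting $m := \max_{a \in \mathcal{H}} \mu(x=a)$, attained at some $c_0$, the identity $\mu(x=c_0) = \sum_{a \in \mathcal{H}} \mu(x=a)\,\mu(x=a^{-1}c_0) = m$ together with $\mu(x=a) > 0$ for every $a \in \mathcal{H}$ forces $\mu(x=a^{-1}c_0) = m$ for all $a$, whence $\mu \equiv m = 1/|\mathcal{H}|$ on $\mathcal{H}$.
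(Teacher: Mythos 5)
Your proof is correct, and the skeleton of your forward direction is the same as the paper's: deduce from idempotency that the support is closed under multiplication, note that a finite subset of a group closed under products is a subgroup, and then pin down the weights. The differences are worth recording. For $(\Leftarrow)$ the paper simply cites Proposition \ref{wow} (a finite subgroup is definably amenable, with the normalized counting measure right-invariant), whereas you verify idempotency by the direct bilinear computation from $\delta_a * \delta_b = \delta_{ab}$ and the count of $|\mathcal{H}|$ pairs $(a,b)$ with $ab=c$; both are fine, yours being more self-contained. For the weights, the paper's route is exactly your main one — view $A = \supp(\mu)$ as a finite compact group and invoke Wendel — and you correctly flag the point the paper leaves implicit, namely that the definable convolution of measures concentrated on $A$ agrees with the classical convolution on $A$ (both being bilinear and determined on Dirac measures), without which Wendel cannot be applied; you also check that $(\mu,\mu)$ is $*$-Borel, another detail the paper omits. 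Finally, your closing alternative — the maximum argument $m = \mu(x=c_0) = \sum_{a \in \mathcal{H}} \mu(x=a)\,\mu(x=a^{-1}c_0) \le m$, forcing all atoms to equal $m = 1/|\mathcal{H}|$ — eliminates the appeal to Wendel altogether, making the whole proposition elementary; in this finite setting that is a genuine (if modest) gain in self-containedness over the paper's argument.
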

\begin{proof} $(\Leftarrow)$ is by Proposition \ref{wow}. 

$(\Rightarrow)$ Assume that $\supp(\mu) = \{a_1,...,a_n\} = A \subseteq \mathcal{G}$. As $\mu$ is idempotent, $\supp(\mu)$ is closed under multiplication (if not, then there exists $c \in \mathcal{G} \setminus A$ such that $c = a_i \cdot a_j$ and $c$ for some $i,j$; then $\mu(x = c) = 0$, but $\mu * \mu(x = c) >0$). Therefore $A$ is closed under products. As any finite subset of a group closed under products is a subgroup, $A$ is a compact group, and $\mu|_{A}$ is an idempotent measure on $A$. Therefore, by \cite[Theorem 1]{Wendel}, $\mu|_{A}$ is the unique Haar measure on the subgroup $\supp(\mu|_A)$ of $A$. But as $\supp(\mu) = A$, we conclude that $\mu = \frac{1}{n} \sum_{a \in A} \delta_{a}$. 
\end{proof} 

Finally, we observe a sufficient condition for idempotence to be preserved under convolution  in the NIP context.
\begin{proposition}\label{prop: commute preserves idemp} If $\mathcal{G}$ is NIP and abelian, and both $\mu,\nu$ are idempotent and dfs, then $\mu * \nu$ is idempotent and dfs.
\end{proposition}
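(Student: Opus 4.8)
The plan is to upgrade the two given idempotents to a new idempotent by showing that, on the class of dfs measures in an abelian NIP group, convolution is an \emph{associative} and \emph{commutative} operation; idempotence of $\mu * \nu$ then drops out of the standard semigroup identity. Throughout I would use that in an NIP theory ``dfs'' coincides with ``finitely approximated'' (Fact \ref{fac: props of measures relation}(1b) and (2b)), that dfs measures are definable hence Borel-definable, and that dfs is preserved under convolution (Proposition \ref{prop: pros pres under conv}(1)--(3)). In particular, since $\mu$ is Borel-definable the pair $(\mu,\nu)$ is $*$-Borel (cf.\ Proposition \ref{prop: gen prod ext type prod}(1)), so $\mu * \nu$ is defined and is itself dfs; this already gives the ``dfs'' half of the conclusion and guarantees that all the iterated convolutions below are well-defined and $*$-Borel.

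For associativity, I would first record that $\mu * \nu$ is exactly the pushforward of the product $\mu \otimes \nu_y$ along the multiplication map $(x,y) \mapsto x \cdot y$. Given three dfs (hence invariant) measures $\mu,\nu,\lambda$, the triple product $\mu \otimes \nu \otimes \lambda \in \mathfrak{M}_{xyz}(\mathcal{G})$ is well-defined and associative because $T$ is NIP (Fact \ref{fac: meas commute}(5)). Unfolding the definitions, for any $\varphi(x) \in \mathcal{L}_x(\mathcal{G})$ one checks that both $((\mu*\nu)*\lambda)(\varphi(x))$ and $(\mu*(\nu*\lambda))(\varphi(x))$ equal the value of $\mu \otimes \nu \otimes \lambda$ on the formula $\varphi(x \cdot y \cdot z)$: writing $(\mu * \nu)(\varphi(x \cdot c)) = \int \mu(\varphi(x \cdot b \cdot c)) \, d\nu(b)$ identifies $(\mu*\nu)*\lambda$ with the pushforward of $(\mu \otimes \nu) \otimes \lambda$ along $(x,y,z) \mapsto x\cdot y \cdot z$, and symmetrically $\mu*(\nu*\lambda)$ is the pushforward of $\mu \otimes (\nu \otimes \lambda)$; associativity of $\otimes$ then closes the gap. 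Thus convolution is associative on dfs measures (this step uses only NIP, not commutativity of $\mathcal{G}$).

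For commutativity I would use abelianness together with Fubini. Since $\mu,\nu$ are finitely approximated, $\mu_x \otimes \nu_y = \nu_y \otimes \mu_x$ as measures in the pair of variables $(x,y)$ (Fact \ref{fac: meas commute}(2)). Evaluating both sides on the formula $\varphi(x \cdot y)$ and using $x \cdot y = y \cdot x$ in $\mathcal{G}$, one gets $(\mu * \nu)(\varphi(x)) = (\nu * \mu)(\varphi(x))$ for every $\varphi$, i.e.\ $\mu * \nu = \nu * \mu$. With associativity and commutativity in hand, and using $\mu * \mu = \mu$ and $\nu * \nu = \nu$, the computation $(\mu * \nu) * (\mu * \nu) = \mu*(\nu*\mu)*\nu = \mu*(\mu*\nu)*\nu = (\mu * \mu) * (\nu * \nu) = \mu * \nu$ shows $\mu * \nu$ is idempotent.

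The only genuinely delicate point is the associativity step: one must carefully match the two iterated convolutions with the single pushforward of the triple $\otimes$-product along $(x,y,z)\mapsto x\cdot y\cdot z$, keeping track of which variable carries which measure as the substitution $\varphi(x\cdot y\cdot z)$ interacts with Fact \ref{fac: meas commute}(5). Everything else is either a direct appeal to the cited facts or a formal semigroup manipulation.
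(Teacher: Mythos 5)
Your proposal has the same skeleton as the paper's proof: dfs of $\mu \ast \nu$ from Proposition \ref{prop: pros pres under conv}, commutativity $\mu \ast \nu = \nu \ast \mu$ from the fact that dfs measures $\otimes$-commute under NIP together with a change of variables using abelianness of $\mathcal{G}$ (your citation of Fact \ref{fac: meas commute}(2), via dfs $=$ finitely approximated under NIP, is an acceptable substitute for the paper's use of Fact \ref{fac: meas commute}(3)), and then idempotence by the formal semigroup computation. The one place you diverge is associativity of $\ast$: the paper simply invokes its Proposition \ref{NIP:measure}(3), which establishes associativity on invariant measures in NIP theories, whereas you attempt to prove it inline from associativity of $\otimes$ (Fact \ref{fac: meas commute}(5)).

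That inline argument has a genuine gap, at exactly the point you flag as delicate: the two identifications with the triple product are \emph{not} symmetric. Since $\otimes$ is defined asymmetrically --- one integrates the fibre maps of the left measure against the right measure --- the identity $[(\mu\ast\nu)\ast\lambda](\varphi(x)) = [(\mu_x\otimes\nu_y)\otimes\lambda_z](\varphi(x\cdot y\cdot z))$ is indeed immediate: pointwise on $S_z(G)$ one has $F_{\mu\ast\nu}^{\varphi'} = F_{\mu\otimes\nu}^{\theta}$ for $\theta(x,y;z) := \varphi(x\cdot y\cdot z)$. But the companion identity $[\mu\ast(\nu\ast\lambda)](\varphi(x)) = [\mu_x\otimes(\nu_y\otimes\lambda_z)](\varphi(x\cdot y\cdot z))$ asks you to compare
\begin{equation*}
\int_{S_z(G)} F_{\mu,G}^{\varphi'}\, d\left((\nu\ast\lambda)|_G\right) \quad \text{with} \quad \int_{S_{yz}(G)} F_{\mu,G}^{\rho}\, d\left((\nu_y\otimes\lambda_z)|_G\right), \qquad \rho(x;y,z) := \varphi(x\cdot y\cdot z),
\end{equation*}
i.e.\ to push the \emph{right-hand} factor of $\otimes$ forward along multiplication, where $F_{\mu}^{\varphi'}$ and $F_{\mu}^{\rho}$ are merely Borel functions. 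This is precisely the step on which the entire proof of Proposition \ref{NIP:measure}(3) is spent (smooth extensions $\widehat{\nu}, \widehat{\lambda}$ of $\nu|_G, \lambda|_{H_1}$, two order-of-integration swaps via Fact \ref{fac: meas commute}(1) and (3), and uniqueness of definable extensions); asserting it ``symmetrically'' is not a proof. The gap is fillable, and in fact without smooth measures: the map $m : S_{yz}(G) \to S_z(G)$ induced by $(b,c) \mapsto b \cdot c$ is well defined and continuous, $(\nu\ast\lambda)|_G = m_{*}\left((\nu_y\otimes\lambda_z)|_G\right)$ as regular Borel measures (they agree on clopen sets, and pushforwards of regular measures under continuous maps are regular), $F_{\mu,G}^{\varphi'}\circ m = F_{\mu,G}^{\rho}$ by $G$-invariance of $\mu$, and then your identity is the change-of-variables formula for pushforwards of Borel functions. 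So either carry out this pushforward argument explicitly, or simply cite Proposition \ref{NIP:measure}(3) as the paper does; as written, the associativity step is unproved.
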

\begin{proof}
	Fix a formula $\varphi(x) \in \mathcal{L}_x(\mathcal{G})$ and assume that $G \prec \mathcal{G}$ witnesses that both  $(\mu,\nu, \varphi)$ and $(\nu,\mu, \varphi)$ are Borel, and both $\mu$ and $\nu$ are dfs over $G$ (taking a common extension of the models witnessing each of this properties separately). By Proposition \ref{prop: pros pres under conv}, $\mu \ast \nu$ is dfs over $G$.  By Fact \ref{fac: meas commute}(3), $\mu$ and $\nu$ commute, so we have
\begin{equation*}
    \mu * \nu (\varphi(x)) = \mu_{x} \otimes \nu_{y} (\varphi(x \cdot y)) = \nu_{y} \otimes \mu_{x}(\varphi(x \cdot y)). 
\end{equation*}
By change of variables and abelianity of $\mathcal{G}$, we can conclude
\begin{equation*}
    = \nu_{x} \otimes \mu_{y}(\varphi(y \cdot x)) = \nu_{x} \otimes \mu_{y}(\varphi(x \cdot y)) = \nu * \mu(\varphi(x)). 
\end{equation*}
Now, let $\lambda := \mu * \nu$. Using associativity of $*$ in the NIP context (see Proposition \ref{NIP:measure}), 
\begin{equation*} \lambda * \lambda = \mu * \nu * \mu * \nu = \mu * \mu * \nu * \nu = \mu * \nu = \lambda. 
\end{equation*}
\end{proof}

\section{Supports of idempotent measures}\label{sec: supports}
In this section, we will show (in an arbitrary theory) that if $\mu$ is definable, invariantly supported (see Definition \ref{def: inv sup meas}) and idempotent, then $(\supp(\mu),*)$ is a compact, left-continuous  semigroup with no closed two-sided ideals. The assumption ``definable and invariantly supported'' is satisfied when $\mu$ is a dfs measure in an arbitrary theory (by Lemma \ref{lem: sup on inv types}(1)), and when $\mu$ is an arbitrary definable measure in an NIP theory (by Lemma \ref{lem: sup on inv types}(3)).

We begin by considering two examples, which illustrate in particular that the support of an idempotent dfs Keisler measure need not be a group in general.

\begin{example}\label{exa: 1} Let $T = T_{\doag}$ be the complete theory of a divisible ordered abelian group in the language $\{+,<,0,1\}$. Let $\mathcal{G}$ be a monster model of $T$ and consider $G := \mathbb{Q}$ as an elementary substructure in the natural way. Let $p_{\infty}$ be the unique global type finitely satisfiable in $G$ and extending $\{x > a: a \in \mathbb{Q}\}$. Let $p_{-\infty}$ be the unique global type finitely satisfiable in $G$ and extending $\{ x <a :a \in \mathbb{Q}\}$. 
Let $\mu := \frac{1}{2}\delta_{p_{- \infty}} + \frac{1}{2}\delta_{p_{\infty}}$, we claim that $\mu, \delta_{p_{\infty}},\delta_{p_{-\infty}} \in \mathfrak{M}_x(\mathcal{G})$ are idempotent. By Proposition \ref{prop: pros pres under conv}, the product $\delta_{\alpha} * \delta_{\beta}$ for $\alpha,\beta \in \{p_{\infty},p_{-\infty}\}$ is finitely satisfiable in $\mathbb{Q}$. Then, using Proposition \ref{prop: calc conv}, it is not hard to verify the following calculation:
\begin{equation*}
    \mu * \mu = \Big(\frac{1}{2}\delta_{p_{- \infty}} + \frac{1}{2}\delta_{p_{\infty}}\Big) * \Big(\frac{1}{2}\delta_{p_{- \infty}} + \frac{1}{2}\delta_{p_{\infty}}\Big) 
\end{equation*}
\begin{equation*}
    =\frac{1}{4} \Big(\delta_{p_{-\infty} }* \delta_{ p_{-\infty}} \Big) + \frac{1}{4} \Big(\delta_{p_{- \infty}} * 
    \delta_{p_{\infty}}\Big) + 
    \frac{1}{4} \Big(\delta_{p_{\infty}} * \delta_{p_{-\infty}}\Big) + 
    \frac{1}{4}\Big(\delta_{p_{\infty}} * \delta_{p_{\infty}}\Big)
\end{equation*}
\begin{equation*}
   = \frac{1}{4}\delta_{p_{- \infty}} + \frac{1}{4}\delta_{p_{\infty}} + \frac{1}{4} \delta_{p_{- \infty}} + \frac{1}{4} \delta_{p_{\infty}} = \frac{1}{2}\delta_{p_{-\infty}} + \frac{1}{2}\delta_{p_{\infty}} = \mu.
\end{equation*}
We observe that while $(\supp(\delta_{p_{\infty}}),*)$ and $(\supp(\delta_{p_{-\infty}}),*)$ are groups (with a single element), $(\supp(\mu),*)$ is not a group since it does not contain an identity element. 
\end{example} 

\begin{example}\label{circle}
Let $G = (S^{1},\cdot,C(x,y,z))$ be the standard circle group over $\mathbb{R}$, with $C$ the cyclic clockwise ordering. Let $T_{O}$ be the corresponding theory. Let $\mu$ be the Keisler measure on this structure which corresponds to the restriction of the Haar measure on $S^{1}$. Let $\mathcal{G}$ be a monster model of $T_{O}$ such that $S^{1} \prec \mathcal{G}$. Then $\mu$ is smooth over $S^{1}$ and admits a unique global extension $\tilde{\mu}$. We remark that $\tilde{\mu}$ is right invariant, hence idempotent (Proposition \ref{wow}). Let $\textrm{st}:S_{x}(\mathcal{\mathcal{G}}) \to S^{1}$ be the standard part map. Assume that $p \in \supp(\tilde{\mu})$ and $\textrm{st}(p) = a$. Then $\varphi_\varepsilon(x) := C(a-\varepsilon, x, a + \varepsilon) \notin p$ for every infinitesimal $\varepsilon \in \mathcal{G}$ ($x \neq a \in p$ as $\mu(x=a) = 0$, and if $\varphi_\varepsilon(x) \in p$, then $\tilde{\mu}(\varphi_\varepsilon(x) \land x \neq a) > 0$, but $\varphi_\varepsilon (G) = \{ a \}$ --- contradicting finite satisfiability of $\tilde{\mu}$ in $G$). As the types are determined by the cuts in the circular order, it follows that for every $a \in S^1$ there are exactly two types $a_{+}(x),a_{-}(x) \in \supp(\tilde{\mu})$ determined by whether $C(a+\varepsilon, x, b)$ holds for every infinitesimal $\varepsilon$ and $b \in G$, or $C(b, x, a-\varepsilon)$
holds for every infinitesimal $\varepsilon$ and $b \in G$, respectively.
It follows that $(\supp(\tilde{\mu}),*) \cong S^{1} \times \{+,-\}$ with multiplication defined by:
\begin{equation*} a_{\delta} * b_{\gamma} = (a \cdot b)_{\delta}
\end{equation*}
for all $a,b \in S^1$ and $\delta,\gamma \in \{+,-\}$. Again, $(\supp(\mu),*)$ is not a group. 
\end{example}

Next we establish various properties of $(\supp(\mu),*)$ when $\mu$ is a global  idempotent measure which is definable and invariantly supported. Given $S_1, S_2 \subseteq S_x(\mathcal{G})$, we write $S_1 * S_2 := \{ p_1 * p_2 \in S_x(\mathcal{G}) : p_i \in S_i \}$ (under the assumption that all such products are defined, i.e.~assuming $(p_1, p_2)$ is $\ast$-Borel for all  $p_i \in S_i$ --- see Remark \ref{rem: ast on types}).
The assumption of being invariantly supported in the lemmas below is only needed to ensure that $\supp(\mu)*\supp(\mu)$ is defined (Fact \ref{NTM}).

\begin{proposition}\label{semigroup} Let $\mu,\nu \in \mathfrak{M}_{x}(\mathcal{G})$. Assume that $\mu$ is definable, and both $\mu$ and $\nu$ are invariantly supported. Then:
\begin{enumerate}
	\item $\supp(\mu) * \supp(\nu) \subseteq \supp(\mu * \nu)$;
	\item $\supp(\mu) * \supp(\nu)$ is a dense subset of $\supp(\mu * \nu)$.
\end{enumerate}
\end{proposition}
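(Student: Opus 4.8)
The plan is to reduce everything to a single translation principle relating membership in a type-product $p * q$ to a condition checkable against $\mu$, and then to run a support/positivity argument through the integral defining $\mu * \nu$. First I would fix a small model $M \prec \mathcal{G}$ witnessing simultaneously that $\mu$ is definable and that both $\mu$ and $\nu$ are invariantly supported (a common extension of the relevant models). By Lemma \ref{lem: sup on inv types}(2) both measures are then $M$-invariant, so $\supp(\mu), \supp(\nu) \subseteq S_x^{\inva}(\mathcal{G}, M)$ and the products in $\supp(\mu)*\supp(\nu)$ are all defined via Fact \ref{NTM}; moreover definability of $\mu$ together with $M$-invariance makes $(\mu,\nu)$ $*$-Borel (the level function is continuous, hence Borel, when restricted to any support), so $\mu*\nu$ is defined. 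When computing $(\mu*\nu)(\varphi)$ for a given $\varphi(x,\overline{c})$ I would enlarge $M$ to some $G \supseteq M\overline{c}$; the resulting value is independent of this choice by Proposition \ref{prop: int over diff mod}.

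The key lemma I would isolate is the following translation: for $p \in \supp(\mu)$, $q \in \supp(\nu)$ and any $\varphi(x) \in \mathcal{L}_x(\mathcal{G})$ with parameters in $G$, one has $\varphi(x) \in p*q$ if and only if $\varphi(x \cdot d) \in p$ for some/any $d \in \mathcal{G}$ realizing $q|_G$. This is immediate from the definition of $*$ on invariant types ($\varphi(x) \in p*q$ iff $\varphi(x\cdot y) \in p \otimes q$ iff $\varphi(x \cdot b) \in p|_{\mathcal{G}b}$ for $b \models q$), together with $G$-invariance of $p$, which guarantees that the truth value of ``$\varphi(x\cdot b) \in p$'' depends only on $\tp(b/G)$ and hence may be read off from any realization $d \in \mathcal{G}$ of $q|_G$.

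For (1), take $p \in \supp(\mu)$, $q \in \supp(\nu)$ and $\varphi(x) \in p*q$; I must show $(\mu*\nu)(\varphi) > 0$. By the translation lemma, with $q' := q|_G$ and $d \models q'$ we have $\varphi(x\cdot d) \in p$, so $F_\mu^{\varphi'}(q') = \mu(\varphi(x\cdot d)) > 0$ because $p \in \supp(\mu)$. Now definability of $\mu$ enters crucially: $F_\mu^{\varphi'}$ is continuous on $S_y(G)$, so $U := \{ F_\mu^{\varphi'} > 0 \}$ is open and contains $q'$. Since $q' = q|_G \in \supp(\nu|_G)$ by Proposition \ref{coheir}(1), and $q'$ lies in the support, every open neighbourhood of $q'$ (in particular $U$) has positive $\nu_G$-measure; as $F_\mu^{\varphi'}$ is strictly positive on $U$, the value $(\mu*\nu)(\varphi) = \int_{\supp(\nu|_G)} F_\mu^{\varphi'}\, d\nu_G$ is positive. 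This pointwise-to-integral step is the main obstacle, and it is precisely what forces the ``definable'' hypothesis: without continuity of the level function, knowing $F_\mu^{\varphi'} > 0$ at the single point $q'$ says nothing about a set of positive measure.

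For (2), the inclusion from (1) and closedness of $\supp(\mu*\nu)$ (Proposition \ref{prop: supp 1}(2)) give $\supp(\mu)*\supp(\nu) \subseteq \supp(\mu*\nu)$, so it remains to prove density. I would take $r \in \supp(\mu*\nu)$ and a basic clopen neighbourhood $[\psi] \ni r$; then $(\mu*\nu)(\psi) > 0$, i.e.\ $\int_{\supp(\nu|_G)} F_\mu^{\psi'}\, d\nu_G > 0$, so the set $\{ q' \in \supp(\nu|_G) : F_\mu^{\psi'}(q') > 0 \}$ has positive measure and is in particular nonempty. Fixing such a $q'$ and $d \in \mathcal{G}$ realizing $q'$, positivity $\mu(\psi(x\cdot d)) > 0$ yields, by Proposition \ref{prop: supp 1}(1), some $p \in \supp(\mu)$ with $\psi(x\cdot d) \in p$; and $q' = q|_G$ for some $q \in \supp(\nu)$ by Proposition \ref{coheir}(1). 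The translation lemma then gives $\psi(x) \in p*q$, so $p*q \in [\psi] \cap \big(\supp(\mu)*\supp(\nu)\big)$. Hence every basic neighbourhood of every point of $\supp(\mu*\nu)$ meets $\supp(\mu)*\supp(\nu)$, which is the desired density. Note that here, in contrast to (1), definability is not needed for the integral step, since positivity of the integral itself supplies a positive-measure set of suitable $q'$.
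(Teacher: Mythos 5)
Your proposal is correct and follows essentially the same route as the paper's proof: the same integral representation of $\mu * \nu$ over $\supp(\nu|_G)$, the same use of continuity of $F_{\mu,G}^{\varphi'}$ (from definability) to pass from positivity at the support point $q|_G$ to positivity of the integral in (1), and the same combination of Proposition \ref{prop: supp 1}(1) and Proposition \ref{coheir}(1) to reconstruct $p$ and $q$ from a point where the level function is positive in (2). The only differences are presentational: you isolate the ``translation lemma'' $\varphi(x) \in p*q \iff \varphi(x\cdot d) \in p$ for $d \models q|_G$ and spell out the pointwise-to-integral step, both of which the paper uses implicitly.
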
 

\begin{proof} (1) Assume that $p \in \supp(\mu), q \in \supp(\nu)$, and let $\varphi(x) \in p *q$. Choose $G \prec \mathcal{G}$ such that $\mu$ is definable over $G$, $p,q$ are finitely satisfiable in $G$, and $G$ contains all the parameters from $\varphi$. We need to show that $\mu * \nu(\varphi(x)) > 0$. Now, 
\begin{equation*}
    \mu * \nu(\varphi(x)) = \int_{\supp(\nu|_{G})} F_{\mu, G}^{\varphi'}d\nu_{G}
\end{equation*} Since $\mu$ is definable, the map $F_{\mu,G}^{\varphi'}:\supp(\nu|_{G}) \to [0,1]$ is continuous. Therefore, it suffices to find some $r \in \supp(\nu|_{G})$ such that $F_{\mu,G}^{\varphi'}(r) > 0$. Consider $r := q|_{G}$. Then, $F_{\mu,G}^{\varphi'}(q|_{G}) = \mu(\varphi(x\cdot b))$, where $b \models q|_{G}$. Then, $\varphi(x \cdot b) \in p$ and since $p \in \supp(\mu)$, we have that $\mu(\varphi(x\cdot b)) > 0$. Hence, $F_{\mu}^{\varphi'}(q|_{G}) > 0$ and so $\mu * \nu(\varphi(x)) > 0$. 

(2) By (1), we already know that $\supp(\mu) * \supp(\nu) \subseteq \supp( \mu * \nu)$. Fix some $r \in \supp(\mu * \nu)$ and a formula $\varphi(x) \in r$.  We need to find $p \in \supp(\mu)$ and $q \in \supp(\nu)$ such that $\varphi(x) \in p*q$. Choose $G$ such that $\mu$ is definable over $G$, all types in $\supp(\mu), \supp(\nu)$ are invariant over  $G$, and $G$ contains the parameters of $\varphi(x)$. Since $\varphi(x) \in r$ and $r$ is in the support of $\mu * \nu$, we know that $\mu * \nu (\varphi(x)) >  0$. Therefore, $\int_{\supp(\nu|_G)} F_{\mu, G}^{\varphi'} d(\nu_{G})>0$, and so there exists some $t \in \supp(\nu|_{G})$ such that $F_{\mu, G}^{\varphi'}(t) > 0$. If $c \models t$, then $\mu(\varphi(x \cdot c)) > 0$. So, by Proposition \ref{prop: supp 1}(1), there exists $p \in \supp(\mu)$ such that $\varphi(x \cdot c) \in p$. By Proposition \ref{coheir}, we let $q \in \supp(\nu)$ be such that $q|_{G} = t$. By construction, we then observe that $\varphi(x) \in p * q$. 
\end{proof}

\begin{corollary}\label{qcont} Assume that $\mu$ is definable, invariantly supported and idempotent. Then $\left(\supp(\mu),* \right)$ is a compact Hausdorff (with the subspace topology) semigroup which is left-continuous, i.e.~the map $-*q: \supp(\mu) \to \supp(\mu)$ is continuous for each $q \in \supp(\mu)$.
\end{corollary}

\begin{proof} By Proposition \ref{prop: supp 1}(2), $\supp(\mu)$ is a compact Hausdorff space.  By Proposition \ref{semigroup}, $\supp(\mu) * \supp(\mu) \subseteq \supp(\mu * \mu) = \supp(\mu)$. Now, choose some $G \prec \mathcal{G}$ such that $\mu$ is definable over $G$, and all types in $\supp(\mu)$ are invariant over $G$. Then $(\supp(\mu),*)$ is a sub-semigroup of $(S_{x}^{\inva}(\mathcal{G},G),*)$ and $*$ is left-continuous by Fact \ref{NTM}.
\end{proof}

We now define some global functions which mimic the map $y \mapsto \int f(x \cdot y)d\mu(x)$.

\begin{definition} Let $\mu \in \mathfrak{M}_x(\mathcal{G})$ be definable, and fix $\varphi(x) \in \mathcal{L}_{x}(\mathcal{G})$. We then define the global function $D_{\mu}^{\varphi'}:S_{y}(\mathcal{G}) \to [0,1]$ via $p \mapsto \mu(\varphi(x\cdot c))$, for some/any $c \models p|_{G}$ and small $G \prec \mathcal{G}$ containing the parameters of $\varphi(x)$ and such that $\mu$ is definable over $G$.
\end{definition}

%\begin{proposition} Assume that $\mu$ is $*$-$\dfs$ and idempotent. For every formula $\varphi(x) \in \mathcal{L}_{x}(\mathcal{G})$ and $q \in \supp(\mu)$ we define the map $H_{\mu,q}^{\varphi'}: \supp(\mu) \to [0,1]$ via $H_{\mu,q}^{\varphi'}(p) =G_{\mu}^{\varphi'}(p*q)$. We claim $H_{\mu,q}^{\varphi}$ is continuous.
%\end{proposition}
%
%\begin{proof} $H_{\mu,q}^{\varphi'}$ is the composition of two continuous functions.
%\end{proof}

Note that for any formula $\varphi(x) \in \mathcal{L}_{x}(\mathcal{G})$, the map $D^{\varphi'}_{\mu}$ is continuous: $D_{\mu}^{\varphi'} = F_{\mu,G}^{\varphi'}\circ r$, where $r: S_y(\mathcal{G}) \to S_y(G)$ is the restriction map, and $F_{\mu,G}^{\varphi'}$ is continuous by definability of $\mu$. The next two results are adapted from Glicksberg's work on semi-topological semigroups into the general model theory context. In particular, see \cite{Glicksberg1,Glicksberg2}.

\begin{proposition} \label{prop: max attained val} Let $\mu \in \mathfrak{M}_{x}(\mathcal{G})$  be definable, invariantly supported and idempotent, and $\varphi(x) \in \mathcal{L}_{x}(\mathcal{G})$ arbitrary. Assume that $D_{\mu}^{\varphi'}|_{\supp(\mu)}$ attains a maximum at $q \in \supp(\mu)$ (exists as this is a continuous function on a compact set). Then for any $p \in \supp(\mu)$, we have that $D_{\mu}^{\varphi'}(q) = D_{\mu}^{\varphi'}(p * q)$.
\end{proposition}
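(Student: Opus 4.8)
The plan is to write $D := D_{\mu}^{\varphi'}$ and $M := D(q) = \max_{\supp(\mu)} D$, fix a small model $G \prec \mathcal{G}$ over which $\mu$ is definable and every type in $\supp(\mu)$ is invariant, and fix an element $c \in \mathcal{G}$ with $c \models q|_{G}$, so that $D(q) = \mu(\varphi(x \cdot c)) = M$. The goal is to prove $D(p * q) = M$ for every $p \in \supp(\mu)$ by an averaging argument in the spirit of Glicksberg. First I would record the easy upper bound: since $\mu$ is idempotent, Proposition \ref{semigroup}(1) gives $\supp(\mu) * \supp(\mu) \subseteq \supp(\mu * \mu) = \supp(\mu)$, so $p * q \in \supp(\mu)$ for every $p \in \supp(\mu)$, whence $D(p * q) \leq M$. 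I would also note that $p \mapsto D(p * q)$ is continuous on $\supp(\mu)$: it is the composite of the left-translation $- * q$, which is continuous on $S_{x}^{\inva}(\mathcal{G},G) \supseteq \supp(\mu)$ by Fact \ref{NTM}, with the continuous function $D$.

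The crux is the averaging identity
\begin{equation*}
\int_{\supp(\mu)} D(p * q)\, d\mu(p) = (\mu * \mu)(\varphi(x \cdot c)) = \mu(\varphi(x \cdot c)) = M,
\end{equation*}
the middle equality being idempotency. To establish the left-hand equality I would unwind both sides. On one side, for $p \in \supp(\mu)$ one has $a \cdot c \models (p * q)|_{G}$ whenever $a \models p|_{Gc}$ (this is how $*$ acts on restrictions, by the definition of $p*q$ in Fact \ref{NTM} together with Fact \ref{fac: product of inv types}, using $G$-invariance of $p$ and $c \models q|_{G}$). Hence, by $G$-invariance of $\mu$, the value $D(p * q) = \mu(\varphi(x \cdot a \cdot c))$ depends only on $p|_{G'}$, where $G' \prec \mathcal{G}$ is any small model with $Gc \subseteq G'$. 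Setting $g(r) := \mu(\varphi(x \cdot b \cdot c))$ for $b \models r$ and $r \in S_{x}(G')$, the pushforward statement of Proposition \ref{coheir}(2) then rewrites $\int_{\supp(\mu)} D(p * q)\, d\mu(p) = \int_{\supp(\mu|_{G'})} g\, d\mu_{G'}$. On the other side, applying Definition \ref{def: conv} to the formula $\psi(x) := \varphi(x \cdot c)$ gives $(\mu * \mu)(\varphi(x \cdot c)) = \int_{\supp(\mu|_{G'})} F_{\mu}^{\psi'}\, d\mu_{G'}$, and $F_{\mu}^{\psi'}(r) = \mu(\varphi(x \cdot b \cdot c)) = g(r)$, so the two integrals coincide.

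Finally I would combine the pieces. Since $D(p * q) \leq M$ pointwise on $\supp(\mu)$ while its $\mu$-average equals $M$ and $\mu(\supp(\mu)) = 1$, the nonnegative function $M - D(- * q)$ has integral zero, hence vanishes $\mu$-almost everywhere; that is, the relatively open set $\{ p \in \supp(\mu) : D(p * q) < M \}$ has $\mu$-measure $0$. But every nonempty relatively open subset of $\supp(\mu)$ has positive measure (by the definition of the support together with $\mu(\supp(\mu)) = 1$, cf.\ Proposition \ref{prop: supp 1}), so this set is empty, and therefore $D(p * q) = M = D(q)$ for all $p \in \supp(\mu)$, as desired. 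The main obstacle is the averaging identity of the previous paragraph: the bookkeeping needed to realize $(p * q)|_{G}$ as $a \cdot c$ and to recognize the resulting integrand as the convolution level function $F_{\mu}^{\psi'}$, which is precisely what lets idempotency $\mu * \mu = \mu$ be invoked. Everything else is either a direct citation (continuity, upper bound) or the standard ``$\mu$-a.e.\ implies everywhere on the support'' principle.
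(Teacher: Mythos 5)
Your proof is correct and is essentially the paper's own argument: both hinge on the pointwise bound $D_{\mu}^{\varphi'}(p*q)\le D_{\mu}^{\varphi'}(q)$ coming from maximality, the averaging identity supplied by idempotency ($\int D_{\mu}^{\varphi'}(p*q)\,d\mu(p)=(\mu*\mu)(\varphi(x\cdot c))=\mu(\varphi(x\cdot c))$), and the principle that a continuous function equal to a constant $\mu$-a.e.\ must equal it everywhere on the support. The only difference is bookkeeping: the paper integrates the level function $F_{\mu,G}^{\theta}$ for $\theta(x;y)=\varphi((x\cdot y)\cdot b)$ directly over $\supp(\mu|_{G})$, whereas you integrate over $\supp(\mu)$ and push forward to the small model via Proposition \ref{coheir}, which is the same computation.
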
 

\begin{proof} Fix a small model $G_{0} \prec \mathcal{G}$ such that $\mu$ is definable over $G_{0}$, and $G_0$ contains the parameters of $\varphi(x)$. Let $b \models q|_{G_{0}}$ and let $\theta(x;y) := \varphi((x\cdot y) \cdot b)$. Now fix a larger submodel $G \prec \mathcal{G}$ such that $G_0b \subset G$. Let $\delta : = \mu(\varphi(x\cdot b))$. Observe that then for any $t \in \supp(\mu|_{G})$, $a \models t$, and $\tilde{t} \in \supp(\mu)$ such that $\tilde{t}|_{G} = t$, we have $F_{\mu, G}^{\theta}(t) = \mu(\varphi(x \cdot a) \cdot b) = \mu \left(\varphi(x \cdot (a \cdot b)) \right) = D_{\mu}^{\varphi'}(\tilde{t}*q) \leq D_{\mu}^{\varphi'}(q)= \delta$ (by the assumption on $q$). We conclude that for any $t \in \supp(\mu|_{G})$, $F_{\mu,G}^{\theta}(t) \leq \delta$. On the other hand,
\begin{equation*} \delta =  D_{\mu}^{\varphi'}(q) = \mu(\varphi(x \cdot b)) = \mu * \mu (\varphi(x \cdot b)) = \mu_{x} \tilde{\otimes} \mu_{y}(\theta(x;y))
\end{equation*}
\begin{equation*} = \int_{\supp(\mu|_G)} F_{\mu,G}^{\theta} d\mu_G.
\end{equation*}
 Therefore, $F_{\mu}^{\theta} = \delta$ almost everywhere (with respect to $\mu_{G}$). Since both maps are continuous, they are equal on $\supp(\mu|_{G})$. Finally, for any $p \in \supp(\mu)$ and $a \models p$, we have:
\begin{equation*} D_{\mu}^{\varphi'}(q)=\delta = F_{\mu,G}^{\theta}(p|_{G}) = \mu(\varphi((x \cdot a) \cdot b)) = \mu(\varphi(x \cdot (a \cdot b))) = D_{\mu}^{\varphi'}(p * q),
\end{equation*}
as wanted.
\end{proof}

\begin{theorem}\label{two} Let $\mu \in \mathfrak{M}_{x}(\mathcal{G})$ be definable, invariantly supported and idempotent. Let $I \subset \supp(\mu)$ be a closed two-sided ideal. Then, $I = \supp(\mu)$.  
\end{theorem}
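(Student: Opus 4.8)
The plan is to argue by contradiction, assuming $I$ is a proper (and nonempty) closed two-sided ideal, and to produce a formula $\varphi(x)$ that no type in $I$ contains yet which appears in some product $p'*r$ with $p'\in\supp(\mu)$ and $r\in I$, contradicting that $I$ is a left ideal. First I would fix $p_0\in\supp(\mu)\setminus I$. Since $\supp(\mu)$ is closed in the Stone space $S_x(\mathcal{G})$, so is $I$, and as $p_0\notin I$ there is a clopen $[\varphi]\ni p_0$ with $[\varphi]\cap I=\emptyset$; enlarging the base model I may assume $\varphi\in\mathcal{L}_x(G)$, that $\mu$ is definable over $G$, and that every type in $\supp(\mu)$ is $G$-invariant. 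Thus $\neg\varphi\in r$ for every $r\in I$, while $\mu(\varphi(x))>0$ because $\varphi\in p_0\in\supp(\mu)$.

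Next I would introduce the continuous function $D_{\mu}^{\varphi'}$ and set $\delta:=\max_{\supp(\mu)}D_{\mu}^{\varphi'}$, attained at some $q\in\supp(\mu)$. The first key point is that $\delta>0$, and this is exactly where idempotency enters: by $\mu=\mu*\mu$ and the definition of convolution,
\[
0<\mu(\varphi(x))=\mu*\mu(\varphi(x))=\int_{\supp(\mu|_{G})}F_{\mu,G}^{\varphi'}\,d\mu_{G},
\]
and since each value $F_{\mu,G}^{\varphi'}(t)$ equals $D_{\mu}^{\varphi'}(\tilde t)$ for a lift $\tilde t\in\supp(\mu)$ of $t$ (using $D_{\mu}^{\varphi'}=F_{\mu,G}^{\varphi'}\circ r$ and Proposition \ref{coheir}), the integrand is bounded by $\delta$, forcing $\delta\geq\mu(\varphi(x))>0$.

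The second key point uses Proposition \ref{prop: max attained val}: for every $p\in\supp(\mu)$ one has $D_{\mu}^{\varphi'}(p*q)=\delta$. Choosing any $p_1\in I$ and putting $r:=p_1*q$, the right-ideal inclusion $I*\supp(\mu)\subseteq I$ gives $r\in I$, while $D_{\mu}^{\varphi'}(r)=\delta>0$; in other words I will have transported the positive maximal value of $D_{\mu}^{\varphi'}$ to a point lying inside the ideal. Finally I unwind $D_{\mu}^{\varphi'}(r)>0$: noting that $D_{\mu}^{\varphi'}(r)=\mu*\delta_r(\varphi(x))$, the argument of Proposition \ref{semigroup}(2), applied with $\nu:=\delta_r$ (whose support is the single invariant type $r$), yields some $p'\in\supp(\mu)$ with $\varphi\in p'*r$. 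Then the left-ideal inclusion $\supp(\mu)*I\subseteq I$ gives $p'*r\in I$, a type containing $\varphi$, contradicting $[\varphi]\cap I=\emptyset$.

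I expect the main obstacle to be this last step: converting the analytic statement ``$D_{\mu}^{\varphi'}$ is positive at $r\in I$'' into the combinatorial statement ``$\varphi$ lies in a product $p'*r$ with $p'\in\supp(\mu)$''. The delicate part is matching the local realization $c\models r|_{G}$ used to compute $D_{\mu}^{\varphi'}(r)=\mu(\varphi(x\cdot c))$ with the global product $p'*r$, and this bookkeeping (local versus global types, and $G$-invariance of the types involved) is precisely what Proposition \ref{semigroup} already carries out, so I would route the argument through that result rather than redo it. It is also worth keeping the two ideal inclusions straight: the right-ideal inclusion is what places $r=p_1*q$ into $I$, and the left-ideal inclusion is what delivers the final contradiction via $p'*r$.
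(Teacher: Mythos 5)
Your proof is correct and follows essentially the same route as the paper: both arguments hinge on the maximum $\delta$ of the continuous function $D_{\mu}^{\varphi'}$ on $\supp(\mu)$, use Proposition \ref{prop: max attained val} together with the right-ideal property to transport that maximal value to a point of $I$, and use the left-ideal property to show that $D_{\mu}^{\varphi'}$ must vanish on $I$ (you phrase this step contrapositively, via Proposition \ref{prop: supp 1}(1) and the invariance bookkeeping of Proposition \ref{semigroup}, while the paper states it as its second claim --- the content is identical). The only genuine variation is how positivity of $\delta$ is obtained: the paper argues by contradiction, using density of $\supp(\mu)*\supp(\mu)$ in $\supp(\mu)$ (Proposition \ref{semigroup}(2)) and continuity of the characteristic function of the clopen set given by $\varphi$, whereas you derive it directly from idempotency through the bound $0<\mu(\varphi(x))=\mu*\mu(\varphi(x))=\int_{\supp(\mu|_{G})}F_{\mu,G}^{\varphi'}\,d\mu_{G}\leq\delta$, justified by lifting points of $\supp(\mu|_{G})$ to $\supp(\mu)$ via Proposition \ref{coheir}; this is slightly more economical, but it is a local simplification rather than a different method.
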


\begin{proof} If $I$ is dense in $\supp(\mu)$, then $I = \supp(\mu)$. So we may assume that $I$ is not dense in $\supp(\mu)$. Therefore, there exists  some $\varphi(x) \in \mathcal{L}_{x}(\mathcal{G})$ such that $\varphi(x) \cap \supp(\mu) \neq \emptyset$ and  $\varphi(x) \cap  I = \emptyset$. Let $G \prec \mathcal{G}$ be a small model containing the parameters of $\varphi$, and such that $\mu$ is definable and invariantly supported over $G$.

\begin{cla}
	There exists some $q \in \supp(\mu)$ such that $D_{\mu}^{\varphi'}(q)>0$.
\end{cla}
\begin{proof}
Assume not. 
%In particular, as $\supp(\mu)*\supp(\mu) \subseteq \supp(\mu)$, for every $p,q \in \supp(\mu)$ we have that $D_{\mu}^{\varphi'}(p*q) = 0$. 
Let $p,q \in \supp(\mu)$ be arbitrary. Let $b \models q|_G, a \models p|_{Gb}$. Then $\mu(\varphi(x \cdot b)) = D^{\varphi'}_{\mu}(q) = 0 $ by assumption, hence $\models \neg \varphi(a \cdot b)$ as $p \in \supp(\mu)$, so $\varphi(x) \notin p * q$.

Consider now the continuous characteristic function $\chi_{\varphi}:\supp(\mu) \to \{0,1\}$. By Proposition \ref{semigroup}(2) and the previous paragraph, $\chi_{\varphi}$ vanishes on a dense subset $\supp(\mu) * \supp(\mu)$ of $\supp(\mu)$, hence $\chi_{\varphi}$ vanishes on $\supp(\mu)$. But this contradicts the choice of $\varphi$.
\end{proof}
So there exists some $q \in \supp(\mu)$ such that $D_{\mu}^{\varphi'}(q) > 0$. Then, since $D_{\mu}^{\varphi'}$ is continuous, it attains a maximum $\delta >0$ on some $r \in \supp(\mu)$. 

\begin{cla}
	For any $h \in I$, we have $D_{\mu}^{\varphi'}(h) = 0$.
\end{cla}
\begin{proof}
Let $h \in I$. Then $D_{\mu}^{\varphi'}(h) = \mu(\varphi(x \cdot b))$, where $b \models h|_{G}$. Then
\begin{equation*}\mu(\varphi(x \cdot b)) = \mu \left(\{p \in \supp(\mu): \varphi(x \cdot b ) \in p\} \right) = \mu \left(\{ p \in \supp(\mu) : \varphi(x) \in p*h\} \right).
\end{equation*}
As  $I$ is a left ideal, we have $\supp(\mu) * h \subseteq I$. By assumption, $\varphi(x) \cap I = \emptyset$, and so we have $\{p \in \supp(\mu) : \varphi(x) \in p * h\} = \emptyset$. Therefore, $D_{\mu}^{\varphi'}(h) = 0$. 
\end{proof}
 
Finally, since $I$ is a right ideal, we have that $h*r \in I$. Therefore, using Proposition \ref{prop: max attained val} and the claim,
\begin{equation*} 0 < D_{\mu}^{\varphi'}(r) = D_{\mu}^{\varphi'}(h * r) = 0.
\end{equation*} 
Therefore, we obtain a contradiction.  
\end{proof}

\begin{corollary} Assume that $|\supp(\mu)| >1$, i.e.~$\mu$ is not a type. Then  $\supp(\mu)$ contains no zero elements, i.e.~there is no element $p \in \supp(\mu)$ such that for any $q$ in $\supp(\mu)$, $p * q = q*p = p$. 
\end{corollary}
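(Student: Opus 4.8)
The plan is to argue by contradiction using the structural results just established, primarily Theorem \ref{two}, which states that a definable, invariantly supported, idempotent measure has no proper closed two-sided ideals in its support. Suppose toward a contradiction that $p \in \supp(\mu)$ is a zero element, meaning $p * q = q * p = p$ for every $q \in \supp(\mu)$. First I would observe that the singleton $\{p\}$ is then a two-sided ideal of $(\supp(\mu), *)$: for any $q \in \supp(\mu)$ we have $p * q = p \in \{p\}$ and $q * p = p \in \{p\}$, so $\supp(\mu) * \{p\} \subseteq \{p\}$ and $\{p\} * \supp(\mu) \subseteq \{p\}$. Moreover $\{p\}$ is closed, since $\supp(\mu) \subseteq S_x(\mathcal{G})$ is Hausdorff and points are closed.

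With $\{p\}$ identified as a closed two-sided ideal, Theorem \ref{two} forces $\{p\} = \supp(\mu)$, i.e.~$|\supp(\mu)| = 1$. This directly contradicts the hypothesis $|\supp(\mu)| > 1$ (equivalently, that $\mu$ is not a Dirac measure $\delta_p$, since a measure supported on a single type is that type). Hence no zero element can exist.

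The only point requiring a moment of care is that the hypotheses of Theorem \ref{two} are in force here: the corollary is stated in the context where $\mu$ is definable, invariantly supported and idempotent (inherited from the ambient setup of this subsection, as in Corollary \ref{qcont}, Proposition \ref{prop: max attained val}, and Theorem \ref{two}), so that $(\supp(\mu), *)$ is a well-defined compact left-continuous semigroup by Corollary \ref{qcont} and all products $p * q$ for $p, q \in \supp(\mu)$ are defined. I do not anticipate a genuine obstacle: the entire content is the immediate reduction that a zero element yields a proper closed two-sided ideal, after which Theorem \ref{two} finishes the argument. The result is essentially a formal corollary of the ``no proper closed two-sided ideals'' statement together with the assumption that the support is nontrivial.
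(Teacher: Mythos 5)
Your proof is correct and is exactly the paper's argument: the paper's entire proof is the observation that a zero element $p$ makes $\{p\}$ a closed two-sided ideal, whence Theorem \ref{two} forces $\{p\} = \supp(\mu)$, contradicting $|\supp(\mu)| > 1$. Your additional remarks about closedness of singletons and the ambient hypotheses on $\mu$ are correct fillings-in of details the paper leaves implicit.
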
 

\begin{proof} If $p$ is a zero-element, then $\{p\}$ is a closed two sided ideal.
\end{proof}

We make some further observations on the structure of the semigroup $\supp(\mu)$ under the additional assumptions on the idempotent measure $\mu$. 
We recall the following structural theorem of Ellis (with the roles of multiplication on the left and on the right exchanged everywhere).

\begin{fact}\label{Ellis}\cite[Proposition 4.2]{Ellis} Assume that $(S,\cdot)$ is a compact Hausdorff semigroup which is left-continuous (i.e.~such that for any $a \in S$, the map $- \cdot a:S \to S$ is continuous). Then, there exists a minimal left ideal $I$ (which is automatically closed). We let $J(I) = \{i \in I: i^{2} = i\}$ be the set of idempotents in $I$.
\begin{enumerate}
\item $J(I)$ is non-empty. 
\item For every $p \in I$ and $i \in J(I)$, we have that $p \cdot i = p$.
\item $I = \bigcup \{i \cdot I: i \in J(I)\}$, where the union is over disjoint sets, and each set $i \cdot I$ is a group with identity $i$.
\item $I \cdot q$ is a minimal left ideal for all $q \in S$. 
\end{enumerate}
\end{fact}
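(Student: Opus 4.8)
The plan is to follow the classical Ellis--Numakura development, using compactness of $S$ together with continuity of the right translations $\rho_a : x \mapsto x \cdot a$ at every stage. First I would establish the existence of a minimal left ideal. Ordering the nonempty closed left ideals of $S$ by reverse inclusion, every chain has nonempty intersection (a nested family of nonempty closed sets in a compact space), and this intersection is again a closed left ideal; Zorn's lemma then yields a minimal closed left ideal $I$. The key point is that $I$ is in fact minimal among \emph{all} left ideals and is forced to be closed: for $a \in I$ the set $S \cdot a = \rho_a(S)$ is a left ideal, and it is closed precisely because $\rho_a$ is continuous and $S$ is compact; since $S \cdot a \subseteq I$, minimality gives $S \cdot a = I$, and the same argument yields $I \cdot a = I$ for every $a \in I$. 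These two identities are the algebraic engine for everything that follows.

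The crux---and the step I expect to be the main obstacle---is the existence of an idempotent, i.e.\ part (1). Here I would prove the Ellis--Numakura lemma: any nonempty compact left-continuous subsemigroup contains an idempotent. Applying Zorn's lemma a second time (to nonempty closed subsemigroups ordered by reverse inclusion) produces a minimal one $T$; for $a \in T$ the set $T \cdot a$ is again a closed subsemigroup contained in $T$, so $T \cdot a = T$, and then $\{x \in T : x \cdot a = a\}$ is a nonempty closed subsemigroup of $T$, hence all of $T$, forcing $a \cdot a = a$. The delicate points are checking that $T \cdot a$ is closed under multiplication (using that $T$ is itself a semigroup) and that it is closed as a set (again continuity of $\rho_a$); this is where compactness and one-sided continuity interact most subtly. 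Since $I$ is a compact left-continuous semigroup, this yields $J(I) \neq \emptyset$.

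For (2) and (3) I would exploit idempotency of the retraction. Fix $i \in J(I)$; the map $\rho_i : I \to I$ satisfies $\rho_i \circ \rho_i = \rho_i$ and has image $I \cdot i = I$, so being a surjective idempotent self-map it is the identity on $I$, giving $p \cdot i = p$ for all $p \in I$, which is (2). For (3), each $i \cdot I \subseteq I$ is a subsemigroup on which $i$ is a two-sided identity, and I would verify it is a group by producing inverses from $I \cdot p = I$ (so $i \in I \cdot p$ yields a left inverse, and a standard monoid argument upgrades it to a two-sided inverse). To see that $I = \bigcup\{i \cdot I : i \in J(I)\}$, given $p \in I$ the set $\{x \in I : x \cdot p = p\}$ is a nonempty closed subsemigroup, so by Ellis--Numakura it contains an idempotent $i$ with $i \cdot p = p$, whence $p \in i \cdot I$. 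Disjointness reduces to showing $i \cdot I \cap j \cdot I \neq \emptyset \Rightarrow i \cdot I = j \cdot I$ (and then $i = j$, since the unique idempotent of a group is its identity): from a common element one extracts $j \cdot i = i$ and $i \cdot j = j$, and then $j \cdot x = x$ for all $x \in i \cdot I$ gives $i \cdot I \subseteq j \cdot I$ (and symmetrically).

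Finally, for (4) I would observe that $I \cdot q = \rho_q(I)$ is a nonempty closed left ideal for every $q \in S$, and check minimality directly: if $L \subseteq I \cdot q$ is a nonempty left ideal and $x = b \cdot q \in L$ with $b \in I$, then $I \cdot x \subseteq L$ while $I \cdot x = (I \cdot b) \cdot q = I \cdot q$ using $I \cdot b = I$, so $L = I \cdot q$. Throughout, the only genuinely topological inputs are compactness of $S$ and continuity of the right translations, and once the idempotent existence lemma is in hand the whole argument is bookkeeping around the identities $S \cdot a = I \cdot a = I$ for $a \in I$.
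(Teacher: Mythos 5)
The paper gives no proof of this statement: it is quoted verbatim as a Fact from Ellis's book (\cite[Proposition 4.2]{Ellis}), so there is no internal argument to compare yours against. Your proposal is the classical Ellis--Numakura development and is correct: the two Zorn arguments (on closed left ideals and on closed subsemigroups), the identities $S \cdot a = I \cdot a = I$ for $a \in I$, the surjective-idempotent-retraction argument for $p \cdot i = p$, the upgrade of left inverses to two-sided inverses in the monoid $i \cdot I$, and the translation argument for minimality of $I \cdot q$ all go through, with compactness, Hausdorffness (closedness of points), and continuity of the right translations used exactly where needed.
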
 

\noindent Assume that $\mu \in \mathfrak{M}_x(\mathcal{G})$ is definable, invariantly supported and idempotent. Then $(\supp(\mu), *)$ is a semigroup satisfying the assumption of Fact \ref{Ellis} by Corollary \ref{qcont}.
\begin{definition} 
We let $I_{\mu}$ denote a minimal (closed) left ideal of $(\supp(\mu), *)$ (it exits by Fact \ref{Ellis}).
We say that $\mu$ is \emph{minimal} if $I_{\mu} = \supp(\mu)$.
\end{definition}

%\begin{proposition} Let $\mu$ be $*$-$\dfs$ idempotent. We let $I_{\mu}$ denote the minimal left ideal of $\supp(\mu)$. Then $\mu$ is minimal if and only if for every $p,q \in \supp(\mu)$, there exists $r \in \supp(\mu)$ such that $r*p =q$.  
%\end{proposition}

In particular, if $\mu$ is minimal, then $\supp(\mu)$ is a disjoint union of subgroups. 

\begin{example} For example, the measure $\tilde{\mu}$ considered in Example \ref{circle} is minimal.
\end{example}
 
\begin{proposition} \label{prop: D is const on supp} Assume that $\mu \in \mathfrak{M}_x(\mathcal{G})$ is definable, invariantly supported, idempotent and minimal (i.e. $I_\mu = \supp(\mu)$). Let $\varphi(x) \in \mathcal{L}_{x}(\mathcal{G})$ be any formula. Then for any $p,q \in \supp(\mu)$, we have that $D_{\mu}^{\varphi'}(p) = D_{\mu}^{\varphi'}(q)$.
\end{proposition}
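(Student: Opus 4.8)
The plan is to reduce everything to Proposition \ref{prop: max attained val} by showing that, under the minimality hypothesis, left multiplication by a single element already sweeps out all of $\supp(\mu)$. First I would record the structural facts that are needed: by Corollary \ref{qcont} the pair $(\supp(\mu),*)$ is a compact Hausdorff left-continuous semigroup, and by Proposition \ref{prop: supp 1}(2) it is nonempty. Since (as noted right after its definition) $D_{\mu}^{\varphi'}$ is continuous on the compact set $\supp(\mu)$, it attains a maximum value $\delta$ at some point $q_0 \in \supp(\mu)$.

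Next I would apply Proposition \ref{prop: max attained val} to the maximizer $q_0$: it gives $D_{\mu}^{\varphi'}(p * q_0) = D_{\mu}^{\varphi'}(q_0) = \delta$ for every $p \in \supp(\mu)$. Thus every element of the set $\supp(\mu) * q_0$ has $D_{\mu}^{\varphi'}$-value exactly $\delta$, and it only remains to verify that this set exhausts $\supp(\mu)$.

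The key step, and the place where minimality $I_\mu = \supp(\mu)$ is genuinely used, is to prove $\supp(\mu) * q_0 = \supp(\mu)$. I would argue that $\supp(\mu) * q_0$ is a nonempty closed left ideal of $(\supp(\mu),*)$: it is closed because it is the image of the compact set $\supp(\mu)$ under the map $- * q_0$, which is continuous by the left-continuity in Corollary \ref{qcont}; and it is a left ideal because, using associativity of $*$ on the semigroup $\supp(\mu)$, we have $\supp(\mu) * (\supp(\mu) * q_0) = (\supp(\mu) * \supp(\mu)) * q_0 \subseteq \supp(\mu) * q_0$. Since $\supp(\mu) = I_\mu$ is a minimal (closed) left ideal of itself (in the sense of Fact \ref{Ellis}), it contains no proper nonempty closed left ideal, which forces $\supp(\mu) * q_0 = \supp(\mu)$.

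Finally I would combine the two observations: every $r \in \supp(\mu)$ is of the form $r = p * q_0$ for some $p \in \supp(\mu)$, so $D_{\mu}^{\varphi'}(r) = \delta$; hence $D_{\mu}^{\varphi'}$ is constant on $\supp(\mu)$ and in particular $D_{\mu}^{\varphi'}(p) = D_{\mu}^{\varphi'}(q)$ for all $p,q \in \supp(\mu)$. The only real obstacle is the middle step $\supp(\mu) * q_0 = \supp(\mu)$; I expect that the delicate point there is to make sure closedness of $\supp(\mu) * q_0$ really follows in this \emph{left}-continuous (rather than jointly continuous) setting, and that associativity of $*$ on $\supp(\mu)$ is available, both of which are guaranteed by Corollary \ref{qcont}.
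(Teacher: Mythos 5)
Your proof is correct, and it takes a somewhat lighter route than the paper at the key step. Both arguments share the same skeleton: continuity of $D_{\mu}^{\varphi'}$ on the compact set $\supp(\mu)$ gives a maximizer $q_0$, Proposition \ref{prop: max attained val} makes $D_{\mu}^{\varphi'}$ constant (equal to the maximum $\delta$) on $\supp(\mu) * q_0$, and the proposition follows once $\supp(\mu) * q_0 = \supp(\mu)$. Where you differ is in how this last identity is obtained. The paper invokes the full structure theorem, Fact \ref{Ellis}(2),(3): it decomposes $I_\mu = \supp(\mu)$ into the disjoint groups $i * I_\mu$ indexed by idempotents $i \in J(I_\mu)$, locates an arbitrary $q$ in one of these groups, and solves $r * (i * p) = q$ inside that group so that $q = (r*i) * p$ with $p$ the maximizer. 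You instead argue directly from minimality: $\supp(\mu) * q_0$ is nonempty, closed (continuous image of a compact set under $- * q_0$, using left-continuity from Corollary \ref{qcont}), and a left ideal (by associativity, which holds since $(\supp(\mu),*)$ sits inside the semigroup of Fact \ref{NTM}), so minimality of $I_\mu = \supp(\mu)$ forces $\supp(\mu) * q_0 = \supp(\mu)$. Your version is the standard fact that in a minimal left ideal $L$ one has $L * a = L$ for every $a \in L$, and it avoids any appeal to idempotents or the group decomposition; what the paper's route buys in exchange is that the decomposition into groups is already set up there and is reused in the surrounding discussion (e.g.\ the remark that minimal supports are disjoint unions of subgroups), so invoking it costs nothing in context. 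Both proofs are complete; yours is marginally more self-contained relative to Fact \ref{Ellis}, needing only the existence (not the structure) of the minimal ideal.
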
 
\begin{proof} By Fact \ref{Ellis}, $\supp(\mu)= \bigcup \{i * \supp(\mu): i \in J(I_\mu)\}$. By continuity, $D_{\mu}^{\varphi'}$ attains a maximum at some $p \in \supp(\mu)$. Let now $q \in \supp(\mu) = I_\mu$ be arbitrary. Then $q \in i *I_\mu$ for some $i \in J(I_\mu)$. Also $i * p \in i * I_\mu$ as $I_\mu = \supp(\mu)$. As $i*I_\mu$ is a group by Fact \ref{Ellis}(3), there exists some $r \in i * I_\mu$ such that $r * (i  * p) = q$. 
But then, applying Proposition \ref{prop: max attained val}, we have
$$D_{\mu}^{\varphi'}(p) = D_{\mu}^{\varphi'}((r * i) * p) = D_{\mu}^{\varphi'}(r * (i * p)) = D_{\mu}^{\varphi'}(q).$$
As $q \in \supp(\mu)$ was arbitrary, this shows the proposition.
%
% And $p \in e_{1} \cdot I_\mu$ for some $e_1 \in J(I_\mu)$, hence $p = e_1 * p$ (since $e_1*I_\mu$ is a group with the identity $e_1$, by Fact \ref{Ellis}(3)). Let now $q \in \supp(\mu)$ be arbitrary. Similarly, $q = e_2 * q$ for some $e_2 \in J(I_\mu)$.
%By Proposition \ref{prop: max attained val} and Fact \ref{Ellis}(2) we have:  
%$$D_{\mu}^{\varphi'}(p) = D_{\mu}^{\varphi'}(e_2 * p) = D_{\mu}^{\varphi'}(e_2 * (e_1 * p) ) = D_{\mu}^{\varphi'}((e_2 * e_1) * p) = D_{\mu}^{\varphi'}(e_{2} * p).$$
%
%
%
%
%
% Therefore there exists some $r \in e_{2} * I$ such that $r * (e_{2} * p) = q$. But then
%$$D_{\mu}^{\varphi'}(p) = D_{\mu}^{\varphi'}(e_{2} * p) = D_{\mu}^{\varphi'}(r * (e_{2} * p_{1})) = G_{\mu}^{\varphi}(q)$$.
\end{proof} 

\begin{proposition} Assume that $\mu \in \mathfrak{M}_x(\mathcal{G})$ is definable, invariantly supported, idempotent and minimal. Then for every $\varphi(x) \in \mathcal{L}_{x}(\mathcal{G})$, $\mu(\varphi(x)) = D_{\mu}^{\varphi'}(p)$ for any $p \in \supp(\mu)$. 
\end{proposition}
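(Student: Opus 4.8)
The plan is to combine idempotency of $\mu$ with the fact, just established in Proposition \ref{prop: D is const on supp}, that $D_{\mu}^{\varphi'}$ is constant on $\supp(\mu)$ under the minimality assumption. Fix $\varphi(x) \in \mathcal{L}_x(\mathcal{G})$ and choose a small model $G \prec \mathcal{G}$ containing the parameters of $\varphi$ over which $\mu$ is definable and invariantly supported. Since $\mu$ is idempotent we have $\mu = \mu * \mu$, so unwinding Definition \ref{def: conv} I would write
\begin{equation*}
\mu(\varphi(x)) = \mu * \mu(\varphi(x)) = \int_{\supp(\mu|_{G})} F_{\mu,G}^{\varphi'} \, d\mu_{G},
\end{equation*}
where $\mu_{G}$ denotes the restriction of the Borel measure $\mu|_{G}$ to the compact set $\supp(\mu|_{G})$.

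The key step is to show that the integrand $F_{\mu,G}^{\varphi'}$ is constant on $\supp(\mu|_{G})$. I would recall from the remark preceding Proposition \ref{prop: max attained val} that $D_{\mu}^{\varphi'} = F_{\mu,G}^{\varphi'}\circ r$, where $r: S_{y}(\mathcal{G}) \to S_{y}(G)$ is the restriction map; moreover, by Proposition \ref{coheir}(1) the map $r$ sends $\supp(\mu)$ onto $\supp(\mu|_{G})$. Hence every $t \in \supp(\mu|_{G})$ can be written as $t = r(\tilde{t})$ for some $\tilde{t} \in \supp(\mu)$, and then $F_{\mu,G}^{\varphi'}(t) = D_{\mu}^{\varphi'}(\tilde{t})$. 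By Proposition \ref{prop: D is const on supp}, this common value is $D_{\mu}^{\varphi'}(p)$ for every $p \in \supp(\mu)$, so $F_{\mu,G}^{\varphi'}$ is identically equal to this single constant on $\supp(\mu|_{G})$.

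Finally, since $\mu_{G}$ is a probability measure on $\supp(\mu|_{G})$ — its total mass is $\mu|_{G}(\supp(\mu|_{G})) = 1$ by Proposition \ref{prop: supp 1}(2) — integrating the constant returns it, giving
\begin{equation*}
\mu(\varphi(x)) = \int_{\supp(\mu|_{G})} D_{\mu}^{\varphi'}(p) \, d\mu_{G} = D_{\mu}^{\varphi'}(p)
\end{equation*}
for any, hence every, $p \in \supp(\mu)$, as desired. I do not anticipate a genuine obstacle here: essentially all of the content has already been absorbed into Proposition \ref{prop: D is const on supp} via minimality (which in turn relied on the Ellis structure theory of Fact \ref{Ellis} and Proposition \ref{prop: max attained val}). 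The only points requiring care are the bookkeeping that identifies the integrand $F_{\mu,G}^{\varphi'}$ on $\supp(\mu|_{G})$ with the global function $D_{\mu}^{\varphi'}$ on $\supp(\mu)$ through the restriction map, and confirming that $\mu_{G}$ carries total mass one so that the integral of a constant is that constant.
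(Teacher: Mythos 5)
Your proof is correct, but it follows a genuinely different route from the paper's. The paper argues by contradiction: using Proposition \ref{prop: D is const on supp} (and replacing $\varphi$ by $\neg\varphi$ if needed) it assumes $\mu(\varphi(x)) > D_{\mu}^{\varphi'}(i)$ for an idempotent $i \in \supp(\mu)$, deduces $\mu(\varphi(x) \wedge \neg\varphi(x\cdot b)) > 0$ for $b \models i|_G$, extracts a type $q \in \supp(\mu)$ containing $\varphi(x) \wedge \neg\varphi(x \cdot b)$, and then invokes Fact \ref{Ellis}(2) ($q * i = q$ since $I_\mu = \supp(\mu)$) to get $\varphi(x), \neg\varphi(x) \in q$, a contradiction. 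You instead compute directly: unwind $\mu = \mu * \mu$ as the integral $\int_{\supp(\mu|_G)} F_{\mu,G}^{\varphi'}\, d\mu_G$, identify the integrand with $D_{\mu}^{\varphi'}$ through the restriction map via $D_{\mu}^{\varphi'} = F_{\mu,G}^{\varphi'} \circ r$ and the surjectivity $r(\supp(\mu)) = \supp(\mu|_G)$ from Proposition \ref{coheir}(1), conclude from Proposition \ref{prop: D is const on supp} that the integrand is constant on $\supp(\mu|_G)$, and integrate against the probability measure $\mu_G$ (total mass one by Proposition \ref{prop: supp 1}(2)). Each step checks out: definability of $\mu$ over $G$ ensures $G$ witnesses that $(\mu,\mu_y,\varphi')$ is Borel, so the integral formula is legitimate, and the variable bookkeeping between $x$ and $y$ is the same identification the paper uses throughout. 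What each approach buys: yours is a clean, positive derivation that confines all use of the Ellis structure theory to the previously proved constancy statement, making the present proposition a pure integration argument; the paper's contradiction argument avoids the explicit appeal to Proposition \ref{coheir} and the pushforward bookkeeping, working instead at the level of individual types in the support and exhibiting more directly how failure of generic invariance clashes with the identity $q * i = q$.
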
 

\begin{proof} Assume not. By Proposition \ref{prop: D is const on supp} and replacing $\varphi(x)$ by $\neg \varphi(x)$ if necessary, we may assume that $\mu(\varphi(x)) > D_{\mu}^{\varphi'}(i)$, where $i$ is an idempotent in $\supp(\mu)$. Then $\mu \left(\varphi(x) \wedge \neg \varphi(x \cdot b) \right) > 0$, where $b \models i|_{G}$ and $G \prec \mathcal{G}$ is chosen as usual. Hence there exists $q \in \supp(\mu)$ such that $\varphi(x) \wedge \neg \varphi(x \cdot b) \in q$. Then $\varphi(x) \in q$, and $\neg \varphi(x) \in q * i$. However, $q * i = q$ by Fact \ref{Ellis}(2), and so we have $\varphi(x), \neg \varphi(x) \in q$  ---  a contradiction.
\end{proof}
\noindent A direct translation of the previous proposition then says that minimal idempotent measures are ``generically'' right-invariant on their supports. 

\begin{corollary} \label{cor: meas on sup is invariant}Assume that $\mu \in \mathfrak{M}_x(\mathcal{G})$ is definable, invariantly supported, idempotent and minimal. Let $\varphi(x;\overline{b}) \in \mathcal{L}_{x}(\mathcal{G})$. Then, for any $a \in \mathcal{G}$ such that $tp(a/G\overline{b}) \in \supp(\mu|_{G\overline{b}})$, we have $\mu(\varphi(x)) = \mu(\varphi(x \cdot a))$.

\end{corollary}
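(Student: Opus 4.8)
The plan is to derive this as a direct translation of the preceding proposition, which gives $\mu(\psi(x)) = D_{\mu}^{\psi'}(p)$ for every $p \in \supp(\mu)$, where I write $\psi(x) := \varphi(x;\overline{b})$. Since $\mu$ is invariantly supported, it is invariant over $G$ by Lemma \ref{lem: sup on inv types}(2); consequently the quantity $\mu(\varphi(x \cdot a;\overline{b}))$ depends only on $\tp(a/G\overline{b})$. Indeed, setting $\chi(x;y,\overline{z}) := \varphi(x\cdot y;\overline{z})$, an $\mathcal{L}(G)$-formula, any $a,a'$ with $\tp(a/G\overline{b}) = \tp(a'/G\overline{b})$ satisfy $a\overline{b} \equiv_G a'\overline{b}$, so $\mu(\chi(x;a,\overline{b})) = \mu(\chi(x;a',\overline{b}))$ by $G$-invariance. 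Thus it suffices to exhibit a single convenient realization of $\tp(a/G\overline{b})$ for which the claimed equality is transparent.

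Next I would lift $q := \tp(a/G\overline{b}) \in \supp(\mu|_{G\overline{b}})$ to a global weakly random type. By Proposition \ref{coheir}(1), the restriction map $r \colon S_x(\mathcal{G}) \to S_x(G\overline{b})$ sends $\supp(\mu)$ onto $\supp(\mu|_{G\overline{b}})$, so there is some $p \in \supp(\mu)$ with $p|_{G\overline{b}} = q$. Applying the preceding proposition to $\psi$ and this particular $p$ then yields
\[
\mu(\varphi(x;\overline{b})) = \mu(\psi(x)) = D_{\mu}^{\psi'}(p).
\]

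Finally, I would unwind the definition of $D_{\mu}^{\psi'}(p)$. Fix a small $G' \prec \mathcal{G}$ with $G\overline{b} \subseteq G'$ over which $\mu$ is definable (definability over $G$ passes to any larger model), and pick $c \models p|_{G'}$. By definition $D_{\mu}^{\psi'}(p) = \mu(\psi(x\cdot c))$, and since $G\overline{b} \subseteq G'$ we have $c \models p|_{G\overline{b}} = q = \tp(a/G\overline{b})$, i.e.\ $c\overline{b} \equiv_G a\overline{b}$. Invariance of $\mu$ over $G$ then gives $\mu(\psi(x\cdot c)) = \mu(\psi(x\cdot a))$, and chaining the equalities produces $\mu(\varphi(x;\overline{b})) = \mu(\varphi(x\cdot a;\overline{b}))$, as required. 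The only point requiring care is the bookkeeping of base models --- ensuring the lift $p$ of $q$ genuinely lands in $\supp(\mu)$ (handled by Proposition \ref{coheir}) and that $G$-invariance legitimately replaces the auxiliary realization $c$ by the given $a$; beyond this translation I do not expect a genuine obstacle.
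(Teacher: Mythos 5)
Your proof is correct and takes essentially the same route as the paper: there the corollary is presented as a ``direct translation'' of the preceding proposition ($\mu(\varphi(x)) = D_{\mu}^{\varphi'}(p)$ for every $p \in \supp(\mu)$), and your argument is precisely that translation spelled out. The two details the paper leaves implicit --- lifting $\tp(a/G\overline{b})$ to a global type in $\supp(\mu)$ via Proposition \ref{coheir}(1), and using $G$-invariance of $\mu$ to replace the auxiliary realization $c \models p|_{G'}$ by the given $a$ --- are exactly the ones you supply, and both are handled correctly.
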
 

\noindent Finally, we record a corollary for the case when the group $\mathcal{G}$ is stable and abelian.

%\begin{proposition} Assume that $\mu$ is a minimal idempotent smooth measure. Then the stabilizer of $\mu$ is definable amenable. 
%\end{proposition} 
%\begin{proof} By the attached note, $\supp(\mu) \subseteq \stab(\mu)$. 
%
%\end{proof} 

\begin{remark} $I_{\mu} = \supp(\mu)$ if and only if for every $p,q$ in the $\supp(\mu)$ there exists $r \in \supp(\mu)$ such that $r * q =p$. 
\end{remark} 

\noindent The following corollary is a direct consequence of Glicksberg's theorem for semi-topological semigroups \cite{Glicksberg2} (note that unless the group is stable and abelian, we only have continuity of $*$ on the left, so we were not in the context of Glicksberg's theorem in the earlier considerations).

\begin{corollary}\label{cor: supp stab ab}
	If $\mathcal{G}$ is stable, abelian and $\mu \in \mathfrak{M}_x(\mathcal{G})$ is idempotent, then $\supp(\mu)$ is an abelian compact Hausdorff topological group.
\end{corollary}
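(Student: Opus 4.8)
The plan is to assemble the machinery developed earlier in the section, the key extra input being that stability and abelianity force the support semigroup to be commutative and hence semitopological. First I would record the reduction to the hypotheses of Corollary \ref{qcont}: since $\mathcal{G}$ is stable, by Fact \ref{fac: props of measures relation}(3)(a) every measure in $\mathfrak{M}_x(\mathcal{G})$ is finitely approximated, so in particular $\mu$ is finitely approximated, whence definable and dfs. By Lemma \ref{lem: sup on inv types}(1) a finitely satisfiable measure is invariantly supported, so $\mu$ is definable, invariantly supported and idempotent. Corollary \ref{qcont} then gives that $(\supp(\mu),*)$ is a compact Hausdorff left-continuous semigroup.

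Next I would upgrade left-continuity to separate continuity using commutativity. Since every measure is finitely approximated, the Dirac measures $\delta_p,\delta_q$ attached to $p,q\in\supp(\mu)$ are finitely approximated over a common small $G$; by Fact \ref{fac: meas commute}(2) they $\otimes$-commute, and combining this with abelianity of $\mathcal{G}$ exactly as in the computation in the proof of Proposition \ref{prop: commute preserves idemp} yields $p*q=q*p$ for all $p,q\in\supp(\mu)$. Thus $(\supp(\mu),*)$ is commutative, so the right translation $p\mapsto p*q$ agrees with the left translation $p\mapsto q*p$; the former is continuous by left-continuity (Fact \ref{NTM}), so multiplication is separately continuous, i.e.~$(\supp(\mu),*)$ is a compact Hausdorff semitopological semigroup.

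Then I would identify the semigroup as a group. By Theorem \ref{two} there are no proper closed two-sided ideals, and in a commutative semigroup every (minimal) left ideal is two-sided, so the minimal left ideal $I_\mu$ provided by Fact \ref{Ellis} must equal $\supp(\mu)$; that is, $\mu$ is minimal. By Fact \ref{Ellis}(3), $\supp(\mu)=\bigcup\{i*\supp(\mu):i\in J(I_\mu)\}$ is a disjoint union of groups. Commutativity forces the idempotent to be unique: if $i,j\in J(I_\mu)$ then $i*j$ is idempotent, and Fact \ref{Ellis}(2) gives $i*j=i$ and $j*i=j$, so $i=j$ by commutativity. Hence there is a single idempotent $e$ and $\supp(\mu)=e*\supp(\mu)$ is a compact Hausdorff abelian group with separately continuous multiplication. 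The final step, which I expect to be the genuine obstacle, is to pass from separate continuity to the full structure of a topological group; this is precisely Ellis's theorem that a group with a compact Hausdorff topology and separately continuous multiplication is automatically a topological group (equivalently, the direct application of Glicksberg's theorem \cite{Glicksberg2} for idempotent measures on semitopological semigroups), which closes the argument.
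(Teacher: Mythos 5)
Your proposal is correct and follows essentially the same route as the paper's proof: stability gives dfs (hence the results of the section apply), commutativity of $(\supp(\mu),*)$ is obtained exactly as in Proposition \ref{prop: commute preserves idemp}, Theorem \ref{two} combined with Fact \ref{Ellis} identifies $\supp(\mu)$ as a group, and Ellis's joint-continuity theorem \cite{ellis1957locally} finishes. Your explicit uniqueness-of-idempotent argument via Fact \ref{Ellis}(2) just spells out the step the paper compresses into ``by commutativity of $*$ and Fact \ref{Ellis}, this implies that $\supp(\mu)$ is a group.''
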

\begin{proof}

Note that $\mu$ is automatically dfs  by Fact \ref{fac: props of measures relation}(3), hence the results of this section apply to it. We see that $(\supp(\mu), *)$ is commutative, as in  Proposition \ref{prop: commute preserves idemp}. Then $*$ is both left and right-continuous.
Hence $I_\mu = \supp(\mu)$ by Theorem \ref{two}. But this is equivalent to: for every $p,q \in \supp(\mu)$ there exists $r \in \supp(\mu)$ such that $r * q =p$. By commutativity of $*$ and Fact  \ref{Ellis}, this implies that $S(\mu)$ is a group. Finally, by a classical theorem of Ellis \cite{ellis1957locally}, separate continuity of multiplication implies joint continuity for (locally) compact groups.
\end{proof}

\noindent Using this corollary, we can quickly describe idempotent measures in strongly minimal groups.

\begin{example}\label{exa: str min}
	Let $\mathcal{G}$ be a strongly minimal group. Then the idempotent measures are precisely of the following form:
	\begin{enumerate}
		\item Haar measures on finite subgroups of $\mathcal{G}$;
		\item $\delta_{p}$, where $p$ is the unique non-algebraic type in $S_{x}(\mathcal{G})$.
	\end{enumerate}

\end{example}
\begin{proof}
Assume that $\mathcal{G}$ is strongly minimal, then it is abelian, and let $\mu$ be an idempotent measure. As $\mathcal{G}$ is in particular $\omega$-stable, by Fact \ref{fac: props of measures relation}(3c) $\mu = \sum_{i \in  \omega} r_i \cdot p_i$ for some $p_i \in S_x(\mathcal{G})$ and some $r_i \in \mathbb{R}_{\geq 0}$ with $\sum_{i \in \omega} r_i = 1$. By strong minimality, let $p \in S_x(\mathcal{G})$ be the unique non-algebraic type. Then clearly $\supp(\mu) = \cl(\{p_i : i \in \omega \})  \subseteq \{p_i : i \in \omega \} \cup  \{p\}$, in particular $\supp(\mu)$ is countable. By Corollary \ref{cor: supp stab ab}, $\supp(\mu)$ is a compact group, and every countable compact group must be finite (using the existence of finite Haar measure). 
%So in fact $\mu = \sum_{i<n} r_i \cdot p_i$ for some types $p_i$ and $r_i \in \mathbb{R}_{\geq 0}, \sum_{i<n}r_i = 1$, and $S(\mu) = \{p_i : i < n\}$. As $\mu \vert_{\supp(\mu)}$ is idempotent, we get that $r_i = \frac{1}{n}$ as in Proposition \ref{prop: finite realized support}.
If $p \notin \supp(\mu)$, then $\supp(\mu)$ is a finite subgroup of $\mathcal{G}$, and we are in the first case by Proposition \ref{prop: finite realized support}. Assume that $p \in \supp(\mu)$. Note that $p$ is clearly right $\mathcal{G}$-invariant, hence $p \ast q = p$ for any $q \in \supp(\mu)$ by Proposition \ref{wow}. As $(\supp(\mu), \ast)$ is a group, this implies $\supp(\mu) = \{ p \}$.
%
%Otherwise $\mu = \sum_{1 \leq i \leq n} \frac{1}{n+1}  \cdot \delta_{a_i} + \frac{1}{n+1}\cdot p$ for some $a_i \in \mathcal{G}$ and $n \geq 1$.  Then, using Proposition \ref{prop: calc conv}, 
%$$\mu = \mu \ast \mu = \sum_{1 \leq i,j \leq n} \frac{1}{n+1} \cdot \delta_{a_i \cdot a_j} + 2 \cdot \frac{n}{n+1} \cdot \frac{1}{n+1} \cdot p + \frac{1}{(n+1)^2} \cdot p$$
%$$ =  \sum_{1 \leq i,j \leq n} \frac{1}{n+1} \cdot \delta_{a_i \cdot a_j} + \frac{2n+1}{(n+1)^2} \cdot p.$$
%
%Consider the formula $\varphi(x) := \bigwedge_{1 \leq i,j \leq n} x \neq a_{i} \cdot a_{j}$, then $\varphi(x) \in p$. Hence on the one hand $\mu(\varphi(x)) = \frac{1}{n+1}$, and on the other $\mu(\varphi(x)) = \frac{2n+1}{(n+1)^2}$. But $\frac{1}{n+1} = \frac{n+1}{(n+1)^2} \neq \frac{2n+1}{(n+1)^2}$ for any $n \geq 1$, a contradiction.
\end{proof}
This example is generalized to arbitrary stable groups in the next section.

\section{Idempotent measures in stable groups}\label{sec: stable groups}
	In this section we classify idempotent measures on a stable group, demonstrating that they are precisely the invariant measures on its type-definable subgroups. Our proof relies on the results of the previous section and a variant of Hrushovski's group chunk theorem due to Newelski \cite{N3}. We will assume some familiarity with the theory of stable groups (see \cite{Poi} or \cite{WagSt} for a general reference). As before, $\mathcal{G}$ is a monster model for a theory extending a group.
	
	\subsection{Stabilizers of definable measures}
	
	\begin{definition}
		Given a measure $\mu \in \mathfrak{M}_x(\mathcal{G})$, we consider the following  (left) \emph{stabilizer group} associated to it:
 $$\Stab(\mu) := \{g \in \mathcal{G} : g \cdot \mu = \mu \} $$
 $$=\{ g \in \mathcal{G} :  \mu(\varphi(x)) = \mu(\varphi(g \cdot x)) \textrm{ for all } \varphi(x) \in \mathcal{L}(\mathcal{G}) \}.$$
	\end{definition}
%
%We begin by showing that when the measure $\mu$ is definable, this group is type-definable. 
%
%	\begin{remark}\label{rem: def mu partition}
%		Let $M \preceq N$ be given. A measure $\mu \in \mathfrak{M}_x(N)$ is $M$-definable if and only if the following holds:
%	 for every formula $\varphi(x,y) \in \mathcal{L}$ and every $n \in \mathbb{N}_{>0}$ there exist some $\mathcal{L}(M)$-formulas $\Phi^{\varphi,\frac{1}{n}}_i(y)$ with $i \in I_n := \{0, \frac{1}{n},   \frac{2}{n}, \ldots, \frac{n-1}{n}, 1\}$ such that:
%		\begin{enumerate}
%			\item the collection $\{ \Phi^{\varphi, \frac{1}{n}}_i(N)  : i \in I_n\}$ forms a covering of $N_y$ (but not necessarily a partition);
%			\item For every $i \in I_n$ and $b \in N_y$, if $\models \Phi^{\varphi,\frac{1}{n}}_i(b)$ then $|\mu(\varphi(x,b)) - i| < \frac{1}{n}$.
%		\end{enumerate}
%	\end{remark}
	
%As discussed in e.g. \cite{}, every definable measure $\mu \in \mathfrak{M}_x(M)$ admits a unique extension to an  $M$-definable measure in $\mathfrak{M}_x(N)$ for any $N \succeq M$, which is then given by the same definition schema and is denoted as $\mu|_N$.

Below we use the characterization of definability of a measure from Fact \ref{fac: chars of def meas}(3), and we follow the notation there.
	
\begin{definition} Assume that $\mu_x \in \mathfrak{M}_x(\mathcal{G})$ is definable over a small model $G \prec \mathcal{G}$.\begin{enumerate}
\item Fix a formula $\varphi(x;y) \in \mathcal{L}$ and $n \in \mathbb{N}_{>0}$. We write $\varphi'(x;y,z)$ to denote the formula $\varphi(z \cdot x;y)$, and given $i \in I_n$ we write 
$$\Phi^{\varphi', \frac{1}{n}}_{\geq i}(y,z) := \bigvee_{j \in I_n, j \geq i} \Phi^{\varphi', \frac{1}{n}}_{j}(y,z).$$
\item Consider the following formula with parameters in $G$ (where $e$ is the identity of $\mathcal{G}$):
	$$\Stab_\mu^{\varphi, \frac{1}{n}}(z) := $$
	$$\forall y \bigwedge_{i \in I_n, i \geq \frac{3}{n}} \left( \left( \Phi^{\varphi',\frac{1}{n}}_{\geq i}(y,e) \rightarrow  \Phi^{\varphi', \frac{1}{n}}_{\geq (i-\frac{2}{n})} (y,z) \right) \land \left(\Phi^{\varphi',\frac{1}{n}}_{\geq i}(y,z) \rightarrow  \Phi^{\varphi', \frac{1}{n}}_{\geq (i-\frac{2}{n})} (y,e) \right) \right).$$
%	$$\Stab_\mu^{\varphi, \frac{1}{n},1}(z) := \forall y\left(  \Phi^{\varphi', \frac{1}{n}}_1(y,e) \leftrightarrow  \Phi^{\varphi', \frac{1}{n}}_1 (y,z)\right).$$
	\item We define the following partial type over $G$:
	$$\Stab_\mu(z) := \bigwedge_{\varphi(x,y) \in \mathcal{L}, n \in \mathbb{N}_{>0}} \Stab^{\varphi, \frac{1}{n}}_{\mu}(z).$$
%	$$\Stab^1_\mu(z) := \bigwedge_{\varphi(x,y) \in \mathcal{L}, n \in \mathbb{N}_{>0}} \Stab^{\varphi, \frac{1}{n},1}_{\mu}(z).$$

\end{enumerate}

\end{definition}

\begin{proposition}\label{prop: stab type def} Let $\mu \in \mathfrak{M}_x(\mathcal{G})$ be definable. Then
	$\Stab(\mu) = \Stab_\mu(\mathcal{G})$.
\end{proposition}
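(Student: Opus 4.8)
The plan is to show that the partial type $\Stab_\mu(z)$ captures, in the limit over $n$, precisely the condition that left-translation by $z$ preserves every value $\mu(\varphi(x;b))$. Fix throughout a small $G \prec \mathcal{G}$ over which $\mu$ is definable, and let $\Phi^{\varphi', \frac 1n}_i(y,z)$ be the defining formulas supplied by Fact \ref{fac: chars of def meas}(3) applied to the partitioned formula $\varphi'(x;y,z) = \varphi(z \cdot x; y)$. For $c \in \mathcal{G}$ and $b \in \mathcal{G}^y$ I would abbreviate $f_c(b) := \mu(\varphi'(x;b,c)) = \mu(\varphi(c \cdot x; b))$, so that $f_e(b) = \mu(\varphi(x;b))$; by Proposition \ref{prop: calc conv}(3) the condition $g \in \Stab(\mu)$ is equivalent to $f_g(b) = f_e(b)$ for every $\mathcal{L}$-formula $\varphi(x;y)$ and every $b \in \mathcal{G}^y$. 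The whole proof then reduces to comparing the two level functions $f_g$ and $f_e$.

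First I would record the two ``staircase'' estimates linking the formulas $\Phi^{\varphi',\frac 1n}_{\geq i}$ to $f$. From property (b) of Fact \ref{fac: chars of def meas}(3): if $\models \Phi^{\varphi',\frac 1n}_{\geq i}(b,c)$ then $f_c(b) > i - \frac1n$. Conversely, using the covering property (a) together with (b): if $f_c(b) \geq i$ then $\models \Phi^{\varphi',\frac 1n}_{\geq i}(b,c)$, since any $j \in I_n$ witnessing the covering at $(b,c)$ satisfies $|f_c(b) - j| < \frac1n$, forcing $j \geq i$. The reliance on the covering property here is the only genuinely non-formal point: property (b) by itself controls the formulas that \emph{hold}, but to run the argument I also need to know what it means when a $\Phi_{\geq i}$ \emph{fails}, and that requires (a).

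For the inclusion $\Stab(\mu) \subseteq \Stab_\mu(\mathcal{G})$, I would take $g \in \Stab(\mu)$, so that $f_g = f_e$ identically. Fixing $\varphi, n, b$ and $i \geq \frac 3n$, the hypothesis $\Phi^{\varphi',\frac 1n}_{\geq i}(b,e)$ gives $f_g(b) = f_e(b) > i - \frac1n \geq i - \frac2n$ by the first estimate, whence the second estimate yields $\Phi^{\varphi',\frac 1n}_{\geq (i-\frac2n)}(b,g)$; the symmetric computation handles the other implication. This is exactly the pair of conjuncts of $\Stab^{\varphi, \frac 1n}_\mu(z)$, so $g$ realizes every $\Stab^{\varphi,\frac 1n}_\mu$ and hence $\Stab_\mu(z)$.

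For the reverse inclusion $\Stab_\mu(\mathcal{G}) \subseteq \Stab(\mu)$, I would take $g \models \Stab_\mu(z)$, fix $\varphi, b, n$, and claim $|f_g(b) - f_e(b)| < \frac 4n$. Chaining the estimates through the first conjunct gives, for $i \geq \frac 3n$, that $f_e(b) \geq i$ implies $\Phi^{\varphi',\frac1n}_{\geq i}(b,e)$, hence $\Phi^{\varphi',\frac1n}_{\geq(i-\frac2n)}(b,g)$, hence $f_g(b) > i - \frac3n$; taking $i$ to be the largest element of $I_n$ below $f_e(b)$ (the case $f_e(b) < \frac3n$ being vacuous) yields $f_g(b) > f_e(b) - \frac4n$, and the second conjunct gives the symmetric bound. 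Since $g \models \Stab^{\varphi,\frac 1n}_\mu(z)$ for \emph{all} $n$, letting $n \to \infty$ forces $f_g(b) = f_e(b)$ for every $\varphi$ and $b$, i.e. $g \in \Stab(\mu)$. The main obstacle is purely bookkeeping: tracking the $\frac1n$-slacks correctly and checking that the restriction $i \geq \frac3n$ keeps every index $i - \frac2n$ inside $I_n$ while making the small-value regime trivial.
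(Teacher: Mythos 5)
Your proof is correct and follows essentially the same route as the paper: both directions rest on the same two estimates extracted from Fact \ref{fac: chars of def meas}(3) (the covering property controlling failures of $\Phi_{\geq i}$ and the approximation property controlling values), with the same $\frac{4}{n}$ slack and the same role for the threshold $i \geq \frac{3}{n}$. The only difference is organizational: the paper proves $\Stab_\mu(\mathcal{G}) \subseteq \Stab(\mu)$ by contraposition with a single $n$ chosen so that $|r-s| \geq \frac{4}{n}$, whereas you prove it directly via the uniform bound $|f_g(b) - f_e(b)| < \frac{4}{n}$ for all $n$ and let $n \to \infty$, which is the same argument repackaged.
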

\begin{proof}

	Assume $g \notin \Stab(\mu)$. Then there exist some $\varphi(x,y) \in \mathcal{L}$ and $b \in \mathcal{G}^y$ such that  taking $r := \mu(\varphi(x,b)) =  \mu(\varphi'(x;b,e))$ and $s := \mu(\varphi(g \cdot x,b)) = \mu(\varphi'(x,b,g)) $ we have $r \neq s$. Say $r > s$ (the case $r < s$ is similar).
	We choose $n \in \mathbb{N}_{>0}$ large enough so that $|r-s| \geq  \frac{4}{n}$ (so in particular $r \geq  \frac{4}{n}$).
	As $\{ \Phi^{\varphi',\frac{1}{n}}_i(\mathcal{G}) : i \in I_n \}$ is a covering of $\mathcal{G}^{yz}$ by Fact \ref{fac: chars of def meas}(3a), there is some $i \in I_n$ such that $\models \Phi^{\varphi',\frac{1}{n}}_i(b,e)$, so in particular $\models \Phi^{\varphi',\frac{1}{n}}_{\geq i}(b,e)$.
	Hence $|r - i| < \frac{1}{n}$ by Fact \ref{fac: chars of def meas}(3b)
	(hence $i \geq \frac{3}{n}$).  If $\models \Phi^{\varphi', \frac{1}{n}}_{\geq (i-\frac{2}{n})} (b,g)$, then by Fact \ref{fac: chars of def meas}(3b) again we must have $\mu(\varphi'(x;b,g)) > i - \frac{2}{n} - \frac{1}{n}$, so $s > i - \frac{3}{n}$, and $r - s < \frac{4}{n}$, contradicting the choice of $n$. Hence $g \not \models \Stab^{\varphi, \frac{1}{n}}_\mu(z)$.
	
	Assume $g \in \Stab(\mu)$, and let $\varphi(x,y), b \in \mathcal{G}^y, n \geq 1$ and $i\geq \frac{3}{n}$ in $I_n$ be arbitrary. Assume that $\models \Phi^{\varphi',\frac{1}{n}}_{\geq i}(b,e)$ holds, then by Fact \ref{fac: chars of def meas}(3b) we have $\mu(\varphi'(x;b,e)) > i - \frac{1}{n}$. If $\models \neg \Phi^{\varphi', \frac{1}{n}}_{\geq (i-\frac{2}{n})} (b,g)$, as $\{ \Phi^{\varphi',\frac{1}{n}}_i(\mathcal{G}) : i \in I_n \}$ is a covering, we must have $\models \Phi^{\varphi',\frac{1}{n}}_j(b,g)$ for some $j < i - \frac{2}{n}$ in $I_n$. But then $\mu(\varphi'(x;b,g)) < j + \frac{1}{n}$ by Fact \ref{fac: chars of def meas}(3b) again. Hence $\mu(\varphi'(x;b,g)) < i - \frac{1}{n} < \mu(\varphi'(x;b,e))$, contradicting $g \in \Stab(\mu)$. Similarly, we get that $\models \Phi^{\varphi',\frac{1}{n}}_{\geq i}(b,g)$ implies $\models \Phi^{\varphi', \frac{1}{n}}_{\geq (i-\frac{2}{n})} (b,e)$, hence $g \models \Stab^{\varphi, \frac{1}{n}}_{\mu}(z)$ as wanted.
		\end{proof}
%
%We also have the following connection to stabilizers with respect to $(S_G(\mathbb{M}), \ast)$.
%
%\begin{lemma}
%	Let $\mu(x)$ be a global measure definable over $M$. Then
%\end{lemma}
%
	
	\subsection{Stable groups and group chunks}\label{sec: stab grp review}
	As before, $T$ is a theory extending a group in a language $\mathcal{L}$, and we let $\mathcal{G}$ be a monster model of $T$.
In this section we review some results on stable groups that will be needed for our purpose.
	
	\begin{fact}(see e.g.~\cite[Fact 1.8]{Pillay} + \cite{CS})\label{fac: top dyn in stable}
	Let $\mathcal{G}$ be a stable group and $G \prec \mathcal{G}$ a small model. 
	Let $\mathcal{H}$ be a subgroup of $\mathcal{G}$ type-definable over $G$ (by a partial type $H(x)$ over $G$). Let $S_H(\mathcal{G}) := \{ p \in S_x(\mathcal{G}) : p(x) \vdash H(x) \}$ be the set of types over $\mathcal{G}$ concentrated on $\mathcal{H}$. Then the following hold.
	\begin{enumerate}
		\item For $p,q \in S_H(\mathcal{G})$, we have that $p \ast q$ (well-defined as all types are definable by stability) is equal to $\tp(a \cdot b / \mathcal{G})$, where $a \models p, b \models q$ in a bigger model of $T$ and $a \ind_{\mathcal{G}} b$ (in the sense of forking independence).
		\item The semigroup $(S_H(\mathcal{G}), \ast)$ has a unique minimal closed left ideal $I$ (also the unique minimal closed right ideal) which is already a subgroup of $(S_H(\mathcal{G}),*)$.
		\item $I$ is precisely the generic types of $\mathcal{H}$, and with its induced topology $I$ is a compact topological group (isomorphic to $\mathcal{H}/\mathcal{H}^{0}$).
		\item $\mathcal{H}$ admits a unique left invariant Keisler measure $\mu$  (which is also the unique right invariant Keisler measure) with $\supp(\mu) = I$. Viewing $\mu$ as a regular Borel measure on $S_H(\mathcal{G})$ and restricting to the closed set $I$, $\mu \restriction_{\supp(\mu)}$  coincides with the Haar measure on $I$.
	\end{enumerate}
	\end{fact}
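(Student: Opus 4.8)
The plan is to derive all four parts from the standard structure theory of stable groups (connected components, generic types, and Haar measure, as in \cite{Poi,WagSt,CS}) combined with the abstract Ellis structure theorem (Fact \ref{Ellis}) and the definition of $\ast$ from Remark \ref{rem: ast on types}; no genuinely new idea is required beyond assembling these. For part (1) I would simply unwind the definitions. Since $T$ is stable, every type in $S_x(\mathcal{G})$ is definable, hence invariant over any small model over which it is based, so $p \ast q$ is defined via Fact \ref{NTM} as $\tp(a\cdot b/\mathcal{G})$ with $b \models q$ and $a \models p|_{\mathcal{G}b}$. In a stable theory the nonforking extension $p|_{\mathcal{G}b}$ is realized precisely by those $a \models p$ with $a \ind_{\mathcal{G}} b$; since forking independence is symmetric, this is exactly the condition $a \ind_{\mathcal{G}} b$ recorded in the Fact, with the order of realization immaterial. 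As $p,q \vdash H$ and $\mathcal{H}$ is a subgroup, $a\cdot b \in \mathcal{H}$, so $p \ast q \vdash H$ and $S_H(\mathcal{G})$ is closed under $\ast$.

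For parts (2) and (3) the key point is to identify the minimal ideal with the set $\mathrm{Gen}$ of generic types of $\mathcal{H}$. Because all types are definable, not only is $p \mapsto p \ast q$ continuous (left-continuity, by Fact \ref{NTM}), but also $q \mapsto p \ast q$ is continuous, so $(S_H(\mathcal{G}),\ast)$ is a compact Hausdorff semigroup with multiplication continuous on both sides. Fact \ref{Ellis} then produces a minimal closed left ideal, and its right-continuous analogue a minimal closed right ideal. I would next invoke the fundamental theorem of stable groups for the type-definable subgroup $\mathcal{H}$: generic types exist, the stabilizer of a generic equals the connected component $\mathcal{H}^0$, and $\mathrm{Gen}$ is in canonical bijection with the coset space $\mathcal{H}/\mathcal{H}^0$, which carries the logic topology of a compact group. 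The absorption property of generics (the product $p\ast q$ is generic as soon as one factor is) shows that $\mathrm{Gen}$ is a two-sided ideal, and the bijection with $\mathcal{H}/\mathcal{H}^0$ upgrades $(\mathrm{Gen},\ast)$ to a group, with identity the generic of $\mathcal{H}^0$. Since a semigroup ideal that is itself a group is automatically the unique minimal two-sided ideal and coincides with every minimal left and right ideal, this yields $I=\mathrm{Gen}$ together with the asserted topological-group structure.

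For part (4) I would transport the Haar measure. The compact group $I \cong \mathcal{H}/\mathcal{H}^0$ carries a unique bi-invariant Haar probability measure; pushing it forward along the bijection gives a regular Borel measure on the closed set $I \subseteq S_H(\mathcal{G})$, hence a Keisler measure $\mu \in \mathfrak{M}_x(\mathcal{G})$ with $\mu(H)=1$ and $\supp(\mu)=I$. Left $\mathcal{H}$-invariance is inherited from translation-invariance of Haar measure under the transitive action $g\cdot p_c = p_{gc}$ of $\mathcal{H}$ on $I$, and right-invariance is symmetric. For uniqueness I would argue that any left-invariant Keisler measure on $\mathcal{H}$ gives every non-generic definable set measure zero (a standard consequence of left-invariance together with the covering definition of genericity, in the spirit of the support/compactness argument of Proposition \ref{prop: supp 1}), so it concentrates on $I$; its restriction to the compact group $I$ is then translation-invariant, hence equals Haar by uniqueness there.

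The main obstacle, in my view, is not any single computation but the careful use of the structure theory in the \emph{type-definable} (rather than definable) setting: checking that $\mathcal{H}^0$, generic types, and their stabilizers behave as in the definable case, that $\ast$ is genuinely two-sided continuous on $S_H(\mathcal{G})$, and that the pushforward of Haar really is a bona fide $\mathcal{H}$-invariant Keisler measure rather than merely a measure on the type space. These are precisely the technical points treated in \cite{Poi,WagSt,CS}, which is why the statement is quoted here as a Fact.
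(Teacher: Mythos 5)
The paper offers no proof of this statement at all --- it is quoted as a Fact with citations to Pillay and \cite{CS} --- so your sketch can only be judged on its own terms. Its overall architecture (identify the generics; show they form a closed two-sided ideal which is a group isomorphic to $\mathcal{H}/\mathcal{H}^0$; use the abstract lemma that an ideal which is a group is the unique minimal left and right ideal; transport Haar measure) is the standard and correct route, and your abstract semigroup lemma is sound. One small inaccuracy in (2): continuity of $-\ast q$ on all of $S_H(\mathcal{G})$ does not follow from Fact \ref{NTM}, which only treats $S^{\inva}_x(\mathcal{G},G)$ for a \emph{fixed} base model $G$, whereas the types in $S_H(\mathcal{G})$ are invariant over varying models. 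The correct justification uses definability of $q$ together with symmetry of forking: writing $p \ast q = \tp(a\cdot b/\mathcal{G})$ with $a \models p$ and $b \models q|_{\mathcal{G}a}$, one gets that $\{p : \varphi(x) \in p \ast q\}$ is the clopen set given by the $d_q$-definition of $\varphi(x \cdot y)$, and dually for $q \mapsto p \ast q$ using $d_p$. This is exactly the mechanism you need, but it is not the one you cite.

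The genuine gap is in your uniqueness argument for (4). You assert that any left-invariant Keisler measure gives every non-generic definable set measure zero as ``a standard consequence of left-invariance together with the covering definition of genericity.'' That implication is false at this level of generality, and the paper's own Example \ref{exa: 1} is a counterexample: in a monster model of DOAG (NIP, not stable), the measure $\frac{1}{2}\delta_{p_{-\infty}} + \frac{1}{2}\delta_{p_{\infty}}$ is translation-invariant, yet it gives measure $\frac{1}{2}$ to the set $x>0$, which is non-generic (finitely many additive translates of $\{x>a\}$ never cover the group); accordingly, invariant measures there are far from unique. Stability must enter precisely at this step: in a stable group a non-generic formula divides with respect to the translation action, i.e.~admits a $k$-inconsistent sequence of translates, and then the probabilistic lemma \cite[Lemma 7.5]{Guide} (the same one invoked in the proof of Lemma \ref{lem: sup on inv types}(3)) shows that a translation-invariant measure must assign it measure $0$, since all translates have equal measure. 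With that substitution, your argument --- concentration of any invariant measure on $I$, followed by uniqueness of Haar measure on the compact group $I$ --- does go through; without it, the uniqueness claim is unsupported, and the covering definition alone can never supply it.
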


 In what follows, we let $\widehat{\mathcal{G}} \succ \mathcal{G}$ be a larger monster model of $T$.
	 We will be following the notation from \cite{N3}.
	
	\begin{definition}
	\begin{enumerate}
	\item Throughout this section, $\Delta$ will denote a finite \emph{invariant set of formulas}, i.e.~formulas of the form $\varphi(u \cdot x \cdot v, \bar{y}) \in \mathcal{L}$ (so a right or a left translate of an instance of $\varphi$ is again an instance of $\varphi$).
		\item We write $R_\Delta$ to denote Shelah's $\Delta$-rank, note that it is invariant under two-sided translation since $\Delta$ is.
		\item For $P \subseteq S_x(\mathcal{G})$, we let $\cl(P)$ denote the topological closure of $P$ in $S_x(\mathcal{G})$, and $\ast P$ denote the closure of $P$ under $\ast$.
		\item For $P \subseteq S_x(\mathcal{G})$, let $\gen(P)$ denote the set of $r \in \cl(\ast P)$ such that there is no $q \in \cl(\ast P)$ with $R_{\Delta}(r) \leq R_{\Delta}(q)$ for all $\Delta$ and $R_{\Delta}(r) < R_{\Delta}(q)$ for some $\Delta$.
		\item For $P \subseteq S_x(\mathcal{G})$, let $\langle P \rangle$ denote the smallest $\mathcal{G}$-type-definable subgroup of $\widehat{\mathcal{G}}$ containing $P(\widehat{\mathcal{G}})$, where  $P(\widehat{\mathcal{G}}) = \{b \in \widehat{\mathcal{G}} : b \models p \textrm{ for some } p \in P \}$.
	\end{enumerate}
	\end{definition}
%	
%	In the following two facts, $\mathcal{G}$ is viewed as a small  elementary submodel of the stable group $\widehat{\mathcal{G}} \models T_{\mathcal{G}}$.

The following two facts are stated in \cite{N3} for strong types over $\emptyset$. Our statements here for types over $\mathcal{G}$ follow by applying them in the stable theory $T_{\mathcal{G}}$ with all of the elements of $\mathcal{G}$ named by constants (for which $\widehat{\mathcal{G}}$ is still a monster model).

	\begin{fact}\label{fac: gen in stable}
		\begin{enumerate}
		\item \cite[Fact 2.1]{N3} If $P \subseteq S_x(\mathcal{G})$ is non-empty, then $\gen(P)$ is a non-empty closed subset of $S_x(\mathcal{G})$.
		\item \cite[Lemma 2.2]{N3} $R_{\Delta}(p \ast q) \geq R_{\Delta}(p), R_{\Delta}(q)$ for any $p,q \in S_x(\mathcal{G})$ and $\Delta$ (this follows by the symmetry of forking, invariance of $R_{\Delta}$ under two-sided translations, and the fact that forking is characterized by a drop in rank).
		\end{enumerate}
	\end{fact}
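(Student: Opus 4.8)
The plan is to treat part (2) as the engine and deduce part (1) from it, so I would prove (2) first. Recall from Fact~\ref{fac: top dyn in stable}(1) that, since every type is definable in a stable theory, $p \ast q = \tp(a\cdot b/\mathcal{G})$ for some $a \models p$, $b \models q$ with $a \ind_{\mathcal{G}} b$ in $\widehat{\mathcal{G}}$. Fixing a finite invariant family $\Delta$, I would bound $R_\Delta(p\ast q) = R_\Delta(\tp(ab/\mathcal{G}))$ from below by $R_\Delta(p)$ as follows. Since $a \ind_{\mathcal{G}} b$, the type $\tp(a/\mathcal{G}b)$ is a nonforking extension of $p = \tp(a/\mathcal{G})$, so $R_\Delta(\tp(a/\mathcal{G}b)) = R_\Delta(p)$ because forking is detected exactly by a drop in some local rank. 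Right multiplication by $b$ is a $\mathcal{G}b$-definable bijection carrying $a$ to $ab$, and as $\Delta$ is closed under right translation this gives $R_\Delta(\tp(ab/\mathcal{G}b)) = R_\Delta(\tp(a/\mathcal{G}b)) = R_\Delta(p)$. Finally, forgetting the parameter $b$ can only increase the $\Delta$-rank (monotonicity in the base), so $R_\Delta(p\ast q) \geq R_\Delta(\tp(ab/\mathcal{G}b)) = R_\Delta(p)$. The bound $R_\Delta(p\ast q) \geq R_\Delta(q)$ is symmetric: by symmetry of forking $b \ind_{\mathcal{G}} a$, so $\tp(b/\mathcal{G}a)$ is a nonforking extension of $q$, and left multiplication by $a$ (using that $\Delta$ is closed under left translation as well) together with monotonicity yields the claim.

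For part (1) the key point is that (2) makes the $\Delta$-ranks \emph{jointly} increase under $\ast$, and I would exploit this to produce a single type of maximal rank vector. Write $X := \cl(\ast P)$, a nonempty closed, hence compact, subset of $S_x(\mathcal{G})$. In a stable theory every type is definable, so besides the left-continuity of $\ast$ from Fact~\ref{NTM} the map $q \mapsto p \ast q$ is continuous for each (definable) $p$; thus $\ast$ is separately continuous and $X$ is a closed subsemigroup. Each local rank $R_\Delta$ is integer-valued, bounded on $S_x(\mathcal{G})$, and upper semicontinuous in the sense that $\{p : R_\Delta(p) \geq n\}$ is closed. Let $N_\Delta := \max_{p \in X} R_\Delta(p)$, attained by some $q_\Delta \in X$.

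I would then build a type $r^\ast \in X$ with $R_\Delta(r^\ast) = N_\Delta$ for \emph{every} $\Delta$ simultaneously. For each finite set $\mathcal{D}$ of these invariant families $\Delta$, form the $\ast$-product $s_{\mathcal{D}}$ of the $q_\Delta$ with $\Delta \in \mathcal{D}$; this lies in $X$, and by iterating (2) we get $R_\Delta(s_{\mathcal{D}}) \geq R_\Delta(q_\Delta) = N_\Delta$, hence $= N_\Delta$, for each $\Delta \in \mathcal{D}$. Directing the finite sets $\mathcal{D}$ by inclusion and passing to a convergent subnet $s_{\mathcal{D}} \to r^\ast \in X$ (compactness), upper semicontinuity gives $R_{\Delta}(r^\ast) \geq \limsup_{\mathcal{D}} R_{\Delta}(s_{\mathcal{D}}) = N_\Delta$ for every fixed $\Delta$ (as $R_\Delta(s_{\mathcal{D}}) = N_\Delta$ once $\Delta \in \mathcal{D}$), so $R_\Delta(r^\ast) = N_\Delta$ throughout. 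Now $r^\ast$ dominates all of $X$ coordinatewise, so the $\preceq$-maximal elements are exactly the types realizing this top vector, giving
\begin{equation*}
\gen(P) = \{ p \in X : R_\Delta(p) = N_\Delta \text{ for all } \Delta \} = \bigcap_{\Delta} \{ p \in X : R_\Delta(p) \geq N_\Delta \}.
\end{equation*}
The right-hand side is nonempty (it contains $r^\ast$) and closed (an intersection of closed sets), which is exactly part (1).

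I expect the main obstacle to be the simultaneous maximization across the infinitely many families $\Delta$: a naive coordinatewise limit argument fails, because a limit of $\preceq$-maximal types can be dominated at a far-away coordinate, so the compactness-plus-upper-semicontinuity passage to $r^\ast$ (together with the separate continuity of $\ast$ ensuring $X$ is a subsemigroup, so that the products $s_{\mathcal{D}}$ remain in $X$) is the real content. Everything else reduces to the standard behavior of local ranks under nonforking, two-sided translation, and enlarging the base already recorded in the proof of (2).
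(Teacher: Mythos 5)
Your proof is correct, but there is nothing in the paper to compare it against line by line: the paper does not prove this Fact at all. It imports both parts from Newelski \cite{N3} (Fact 2.1 and Lemma 2.2 there), the only argument supplied being the remark that Newelski's statements, formulated for strong types over $\emptyset$, transfer to types over $\mathcal{G}$ by working in $T_{\mathcal{G}}$ with constants for the elements of $\mathcal{G}$, plus the parenthetical sketch attached to (2). Your proof of (2) is precisely that sketch carried out: nonforking extensions preserve each local rank $R_\Delta$, two-sided translation invariance of $R_\Delta$ (since $\Delta$ is an invariant family), and monotonicity of rank under restriction of the parameter set, with symmetry of forking entering exactly where you use it to get the bound for $q$. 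Your proof of (1) --- multiplying rank-maximizing types $q_\Delta$ via (2), passing to a limit along the net of finite sets $\mathcal{D}$, and using upper semicontinuity (closedness of $\{p : R_\Delta(p) \geq n\}$) to produce a single type $r^\ast$ realizing every maximum $N_\Delta$ simultaneously, so that $\gen(P) = \bigcap_\Delta \{p \in \cl(\ast P) : R_\Delta(p) \geq N_\Delta\}$ is closed and nonempty --- is the standard argument and is sound; your closing remark correctly identifies the simultaneous maximization as the real content.

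Two points deserve tightening. First, your appeal to Fact \ref{NTM} for left-continuity is slightly off target: that fact lives on $S^{\inva}_x(\mathcal{G},G)$ for a \emph{fixed} small $G$, whereas your $X = \cl(\ast P)$ sits in all of $S_x(\mathcal{G})$ and there is no single small model over which all types in $X$ are invariant. In a stable theory the fix is direct: $\varphi(x) \in p \ast q$ iff $(d_p\varphi')(y) \in q$ iff $(d_q\varphi'')(x) \in p$, where $d_p\varphi'$ and $d_q\varphi''$ are the definitions of $p$ and $q$ for $\varphi(x \cdot y)$ (the second equivalence being symmetry of forking); this gives separate continuity of $\ast$ on the whole of $S_x(\mathcal{G})$, and then a two-step net argument shows $X$ is closed under $\ast$. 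Second, you genuinely need the products $s_{\mathcal{D}}$ to be formed from elements of $X$ rather than of $\ast P$: upper semicontinuity permits $N_\Delta$ to be attained only at limit points of $\ast P$, so the subsemigroup property of $X$ is not a dispensable convenience, and its proof is worth making explicit rather than asserting.
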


%	***
%	work in the stable theory $T_\mathcal{G}$ in the language $\mathcal{L}_{\mathcal{G}}$ with all of the elements of $\mathcal{G}$ named by new constants (obviously, $T$ stable implies $T_{\mathcal{G}}$ is stable), and let $\widehat{\mathcal{G}} \succ \mathcal{G}$ be a larger monster model of $T_{\mathcal{G}}$.
%	***

	\begin{fact}\label{fac: group chunk}\cite[Theorem 2.2]{N3} Assume $T$ is stable.
		Let $P \subseteq S_x(\mathcal{G})$ be a non-empty set of types. Then 
		$$\langle P \rangle = \left\{ a \in \widehat{\mathcal{G}} : \tp(a/\mathcal{G}) \ast \gen(P) = \gen(P) \textrm{ setwise} \right\}$$
		is a $\mathcal{G}$-type-definable subgroup of $\widehat{\mathcal{G}}$ and $\gen(P)$ is precisely the set of generic types of $\langle P \rangle$ over $\mathcal{G}$.
	\end{fact}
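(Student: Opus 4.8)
The plan is to show that $(\gen(P),\ast)$ is a compact group sitting inside $(S_x(\mathcal{G}),\ast)$, and then to recover $\langle P\rangle$ as the setwise stabilizer of this group, identifying $\gen(P)$ with the generics of $\langle P\rangle$ via Fact~\ref{fac: top dyn in stable}.

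First I would verify that $\gen(P)$ is a subsemigroup of $(S_x(\mathcal{G}),\ast)$. Since $T$ is stable every type is definable, so $(S_x(\mathcal{G}),\ast)$ is a compact, associative, left-continuous semigroup by Fact~\ref{NTM}; moreover $p\ast-$ is continuous for each (definable) $p$, so $\ast$ is separately continuous. From this $\cl(\ast P)$ is a closed subsemigroup: it is closed by definition, closed under $\ast$ on $\ast P$ by construction, and then closed under $\ast$ on all of $\cl(\ast P)$ by the two separate continuities. For $p,q\in\gen(P)$, Fact~\ref{fac: gen in stable}(2) gives $R_{\Delta}(p\ast q)\ge R_{\Delta}(p)$ for every $\Delta$; as $p$ is already $R_{\Delta}$-maximal in $\cl(\ast P)$ and $p\ast q\in\cl(\ast P)$, all these inequalities are equalities, so $p\ast q\in\gen(P)$. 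Thus $(\gen(P),\ast)$ is a compact, associative, left-continuous semigroup.

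Next I would promote this to a group via a nonforking computation. Fixing $p,q\in\gen(P)$ and realizations $a\models p$, $b\models q$ with $a\ind_{\mathcal{G}}b$, so that $p\ast q=\tp(ab/\mathcal{G})$, translation by $b$ is a $\mathcal{G}b$-definable bijection, whence $R_{\Delta}(ab/\mathcal{G}b)=R_{\Delta}(a/\mathcal{G}b)=R_{\Delta}(a/\mathcal{G})$. Because $p\ast q\in\gen(P)$ has the same (maximal) $\Delta$-ranks as $p$, I would deduce $R_{\Delta}(ab/\mathcal{G})=R_{\Delta}(ab/\mathcal{G}b)$ for all $\Delta$, i.e.\ $ab\ind_{\mathcal{G}}b$, and symmetrically $ab\ind_{\mathcal{G}}a$. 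These independences are exactly what is needed to solve the equations $p\ast x=r$ and $x\ast p=r$ inside $\gen(P)$ for arbitrary $p,r\in\gen(P)$: taking $c\models r$ with $c\ind_{\mathcal{G}}a$ and setting $b:=a^{-1}c$, one checks $a\ind_{\mathcal{G}}b$ and $p\ast\tp(b/\mathcal{G})=r$. Hence $\gen(P)$ is an associative quasigroup, i.e.\ a group, with identity $e_P$; as a group it is its own unique minimal left ideal (cf.\ Fact~\ref{Ellis}).

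Finally I would recover $\langle P\rangle$ as a stabilizer. Writing $S:=\{a\in\widehat{\mathcal{G}}:\tp(a/\mathcal{G})\ast\gen(P)=\gen(P)\ \text{setwise}\}$, associativity of $\ast$ shows $S$ is closed under products of independent elements and under inverses, and the standard reduction that every element of a stable group is a product of two independent generics upgrades this to $S$ being a subgroup; its type-definability follows from definability of types in $T_{\mathcal{G}}$, exactly as the stabilizer of a definable measure was shown to be type-definable in Proposition~\ref{prop: stab type def}. Since $P(\widehat{\mathcal{G}})\subseteq S$, minimality of $\langle P\rangle$ gives $\langle P\rangle\subseteq S$; conversely, applying Fact~\ref{fac: top dyn in stable} to the type-definable group $\langle P\rangle$, its generics form the unique minimal ideal of $(S_{\langle P\rangle}(\mathcal{G}),\ast)$, and a rank comparison identifies this minimal ideal with $\gen(P)$, forcing $S\subseteq\langle P\rangle$ and simultaneously yielding the final assertion that $\gen(P)$ is precisely the set of generics of $\langle P\rangle$. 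The main obstacle is the group-reconstruction step: checking that the quasigroup divisions above remain inside $\cl(\ast P)$ (equivalently, that $\gen(P)$ is closed under the inverses produced), which is the genuine content of Hrushovski's group-chunk mechanism and where invariance of $R_{\Delta}$ under two-sided translation and the symmetry of forking are used most delicately.
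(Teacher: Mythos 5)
First, a point of framing: the paper offers no proof of this statement at all --- it is imported verbatim as a Fact from Newelski \cite[Theorem 2.2]{N3}, a variant of Hrushovski's group chunk theorem. So your proposal has to stand on its own, and its opening step does: since $T$ is stable all types are definable, $\ast$ is separately continuous on $S_x(\mathcal{G})$, so $\cl(\ast P)$ is a compact $\ast$-closed set; combining Fact~\ref{fac: gen in stable}(2) with non-domination shows that $\gen(P)$ is $\ast$-closed and that all its elements share a single rank profile; and the computation giving $ab \ind_{\mathcal{G}} b$ and $ab \ind_{\mathcal{G}} a$ for independent realizations $a \models p$, $b \models q$ with $p,q \in \gen(P)$ is correct and is indeed the right kind of lemma to aim for.

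The gap is exactly where you flag it, and it is not a loose end --- it is the theorem. To solve $p \ast x = r$ you set $b := a^{-1}c$ with $c \models r$, $c \ind_{\mathcal{G}} a$; translation-invariance of $R_\Delta$ gives $R_\Delta(b/\mathcal{G}a) = R_\Delta(r)$, but to conclude $a \ind_{\mathcal{G}} b$ you need the reverse bound $R_\Delta(b/\mathcal{G}) \leq R_\Delta(r)$, and the maximality of the profile of $r$ is maximality only \emph{within} $\cl(\ast P)$; nothing places $\tp(b/\mathcal{G})$ in $\cl(\ast P)$, so neither the independence nor the membership of the solution in $\gen(P)$ is established. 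Note that this same surjectivity problem blocks even the inclusion $P(\widehat{\mathcal{G}}) \subseteq S$ (one gets $p \ast \gen(P) \subseteq \gen(P)$ for $p \in P$ easily, but equality is again a division statement), so the stabilizer description cannot get off the ground without it; overcoming it is precisely what the group-chunk mechanism (building the group from germs/stabilizers rather than locating inverses inside $\gen(P)$) is for. Your third step has independent problems as well: closure of $S$ under inverses does not follow from associativity, because $\tp(a^{-1}/\mathcal{G}) \ast \tp(a/\mathcal{G})$ is computed from \emph{independent} realizations and is not $\tp(e/\mathcal{G})$; the ``standard reduction'' that every element is a product of two independent generics presupposes that $S$ is a group, which is what is being proved; and the appeal to Proposition~\ref{prop: stab type def} is a disanalogy --- that proposition type-defines the pointwise stabilizer $\{g \in \mathcal{G} : g \cdot \mu = \mu\}$ under genuine group translation of a definable measure, whereas here one must type-define the \emph{setwise} stabilizer of a closed set of types under $\ast$, a condition of a different logical shape whose type-definability is part of Newelski's work. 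So the proposal reproduces the correct opening moves but does not prove the statement.
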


	\subsection{Classification of idempotent measures}
	
We are ready to prove the main result of this section.

	\begin{theorem}
		Let $\mathcal{G}$ be a monster model of $T$, and let $\mu \in \mathfrak{M}_x(\mathcal{G})$ be a global Keisler measure (in particular, $\mu$ is dfs by Fact \ref{fac: props of measures relation}(3a)). Then the following are equivalent:
		\begin{enumerate}
			\item $\mu$ is idempotent;
			\item $\mu$ is the unique right-invariant (and also the unique left-invariant) measure on the type-definable subgroup $\Stab(\mu)$ of $\mathcal{G}$.
		\end{enumerate}
	\end{theorem}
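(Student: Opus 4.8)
The plan is to prove the easy direction $(2)\Rightarrow(1)$ directly from Proposition \ref{wow}, and to devote the real work to $(1)\Rightarrow(2)$, combining the support analysis of Section \ref{sec: supports} with Newelski's group chunk theorem. For $(2)\Rightarrow(1)$: if $\mu$ is the unique right-invariant measure on $\mathcal{H}:=\Stab(\mu)$, then in particular $\mu$ is right $\mathcal{H}$-invariant and $\mu(H(x))=1$, so $\mathcal{H}$ is definably amenable (witnessed by $\mu$ itself) and Proposition \ref{wow} gives $\mu*\mu=\mu$.

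For $(1)\Rightarrow(2)$, first note that since $T$ is stable $\mu$ is dfs, hence definable and invariantly supported (Fact \ref{fac: props of measures relation} and Lemma \ref{lem: sup on inv types}), so the results of Section \ref{sec: supports} apply: $(\supp(\mu),*)$ is a compact left-continuous semigroup with no proper closed two-sided ideal (Theorem \ref{two}). Writing $P:=\supp(\mu)$, which is a closed subsemigroup by idempotency, I would form the group chunk $\mathcal{H}:=\langle P\rangle$ from Fact \ref{fac: group chunk}, so that $\gen(P)$ is exactly the set of generic types of $\mathcal{H}$ over $\mathcal{G}$. The first key step is the identification $\supp(\mu)=\gen(P)$: using Fact \ref{fac: gen in stable}(2) (that $*$ can only increase $\Delta$-rank) one checks that $\gen(P)$, the set of $*$-maximal-rank types inside $P=\cl(*P)$, is a two-sided ideal of $(P,*)$, and it is closed and nonempty by Fact \ref{fac: gen in stable}(1); Theorem \ref{two} then forces $\gen(P)=\supp(\mu)$. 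In particular $\supp(\mu)$ is the set of generics of $\mathcal{H}$, which is a group under $*$ by Fact \ref{fac: top dyn in stable}, so $\supp(\mu)$ is its own minimal left ideal and $\mu$ is minimal in the sense of Section \ref{sec: supports}.

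The second key step is to upgrade the generic right-invariance of a minimal idempotent measure to genuine right $\mathcal{H}$-invariance. Corollary \ref{cor: meas on sup is invariant} gives $\mu(\varphi(x))=\mu(\varphi(x\cdot a))$ whenever $\tp(a/G\bar{b})\in\supp(\mu|_{G\bar{b}})$, and by Proposition \ref{coheir} these are precisely the elements generic in $\mathcal{H}$ over $G\bar{b}$. For an arbitrary $a\in\mathcal{H}$ and $\varphi(x;\bar{b})$ I would pick $c$ generic in $\mathcal{H}$ over $G\bar{b}a$; then $c$ is generic over $G\bar{b}a$ and $a\cdot c$ is generic over $G\bar{b}$, since left translation by $a\in\mathcal{H}$ preserves generics. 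Two applications of the corollary then give $\mu(\varphi(x\cdot a))=\mu(\varphi(x\cdot a\cdot c))=\mu(\varphi(x\cdot(a\cdot c)))=\mu(\varphi(x))$. Since $\supp(\mu)\subseteq S_H(\mathcal{G})$ we also have $\mu(H(x))=1$, so $\mu$ is a right-invariant measure concentrated on $\mathcal{H}$.

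Finally, I would identify $\mathcal{H}$ with $\Stab(\mu)$ and invoke uniqueness. The inclusion $\Stab(\mu)\subseteq\mathcal{H}$ is essentially free: $g\cdot\mu=\mu$ forces $g\cdot\supp(\mu)=\supp(\mu)$ (a pushforward computation), and since $\delta_g*p=g\cdot p$ by Proposition \ref{prop: calc conv} the group-chunk description of $\langle P\rangle$ in Fact \ref{fac: group chunk} yields $g\in\langle P\rangle\cap\mathcal{G}=\mathcal{H}$. For the reverse inclusion I would use that, by Fact \ref{fac: top dyn in stable}(4), the unique right-invariant measure on $\mathcal{H}$ is also left-invariant; as $\mu$ is right-invariant it coincides with this bi-invariant measure, whence $g\cdot\mu=\mu$ for all $g\in\mathcal{H}$. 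Thus $\Stab(\mu)=\mathcal{H}$, $\mu$ is the bi-invariant measure on it, and the required uniqueness is exactly Fact \ref{fac: top dyn in stable}(4). I expect the main obstacle to be the rank-ideal argument identifying $\supp(\mu)$ with the generics $\gen(P)$ of the group chunk, together with the bookkeeping needed to apply Fact \ref{fac: top dyn in stable}(4): one must check that $\mathcal{H}=\langle P\rangle$ is type-definable over the small model $G$ over which $\mu$ is dfs (using that every type in $\supp(\mu)$ is nonforking over $G$), since that fact is stated for subgroups type-definable over a small model.
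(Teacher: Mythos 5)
Your proposal is correct in outline and follows the same skeleton as the paper's proof: both get $(2)\Rightarrow(1)$ from Proposition \ref{wow}, and both prove $(1)\Rightarrow(2)$ by using Fact \ref{fac: gen in stable}(2) to show that $\gen(\supp(\mu))$ is a nonempty closed two-sided ideal of $(\supp(\mu),\ast)$, so that Theorem \ref{two} forces $\gen(\supp(\mu))=\supp(\mu)$, and then invoking Newelski's group chunk theorem (Fact \ref{fac: group chunk}) together with Corollary \ref{cor: meas on sup is invariant}. Where you genuinely diverge is the endgame. The paper passes to a larger monster model $\widehat{\mathcal{G}}\succ\mathcal{G}$ and replaces $\mu$ and the types in its support by their unique $\mathcal{G}$-definable extensions; the whole point of this maneuver is that $\langle \supp(\mu)\rangle$ is then type-definable over the \emph{small} submodel $\mathcal{G}$ of $\widehat{\mathcal{G}}$, so Fact \ref{fac: top dyn in stable} applies verbatim, after which the paper identifies $\widehat{\mu}$ with the unique invariant measure on $\widehat{\mathcal{H}}$ via uniqueness of the Haar measure on the compact group of generics, and finally recognizes $\widehat{\mathcal{H}}$ as the stabilizer. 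You instead stay inside $\mathcal{G}$, upgrade the generic invariance of Corollary \ref{cor: meas on sup is invariant} to genuine right $\mathcal{H}$-invariance by a two-step translation argument with a generic element $c$, and then prove the two inclusions $\Stab(\mu)\subseteq\mathcal{H}$ (via the stabilizer description in Fact \ref{fac: group chunk} and a pushforward of supports) and $\mathcal{H}\subseteq\Stab(\mu)$ (via bi-invariance in Fact \ref{fac: top dyn in stable}(4)). This is a legitimate and in some ways more direct route, avoiding the Haar-uniqueness step for establishing invariance itself.

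Two caveats. First, what you dismiss as ``bookkeeping'' --- type-definability of $\mathcal{H}=\langle P\rangle$ over a small model, and the identification of $\supp(\mu|_{G\bar{b}a})$ with the generics of $\mathcal{H}$ over $G\bar{b}a$ --- is exactly the obstruction that drives the paper's passage to $\widehat{\mathcal{G}}$ and occupies most of its ten-step transfer argument; every ingredient you use (the group structure on the generics, hence minimality of $\mu$; the meaning of Corollary \ref{cor: meas on sup is invariant} over small parameter sets; Fact \ref{fac: top dyn in stable}(4)) presupposes it. Your one-line appeal to nonforking of the supported types over $G$ is not by itself sufficient; as the paper's closing remark indicates, small type-definability can be extracted from Hrushovski's theorem that type-definable groups in stable theories are intersections of definable ones, and that is what your route would need, together with lemmas matching the paper's observations (4)--(6) relating supports and generics over small sets. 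Second, a local error: the equality $\mu(\varphi(x\cdot a))=\mu(\varphi(x\cdot a\cdot c))$ is not an instance of Corollary \ref{cor: meas on sup is invariant}, since that corollary inserts the translating element \emph{adjacent to} $x$; applied to $\psi(x):=\varphi(x\cdot a)$ with element $c$ it yields $\mu(\varphi(x\cdot c\cdot a))$, not $\mu(\varphi(x\cdot a\cdot c))$. The fix is easy --- run the chain with $c\cdot a$ in place of $a\cdot c$ (right translates of generics are generic), or observe that $a\cdot c\cdot a^{-1}$ is also generic over $G\bar{b}a$ --- but as written the chain silently assumes commutativity.
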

	\begin{proof}
	(2) implies (1) by Proposition \ref{wow}, and we show that (1) implies (2).
	
	Let $\mu \in \mathfrak{M}_x(\mathcal{G})$ be an idempotent measure, by  Fact \ref{fac: props of measures relation}(3a) $\mu$ is definable over some small model $G \prec \mathcal{G}$ by Proposition \ref{prop: stab type def}.

By Corollary \ref{qcont}, $\supp(\mu)$ is a closed subset of $S_x(\mathcal{G})$ and is closed under $\ast$, hence $\cl(\ast \supp(\mu)) = \supp(\mu)$ and $\gen(\supp(\mu)) \subseteq \supp(\mu)$.

We claim that $\gen(\supp(\mu))$ is a two-sided ideal in $(\supp(\mu), \ast)$. Indeed, let $r \in \gen(\supp(\mu))$ and $q \in \supp(\mu)$ be arbitrary.  
If $r \ast q$ is not in $\gen(\supp(\mu))$, then there exists some $p \in \supp(\mu)$ with $R_{\Delta}(p) \geq R_{\Delta}(r \ast q) \geq R_{\Delta}(r)$ for all $\Delta$ and some inequality strict (by Fact \ref{fac: gen in stable}(2)), contradicting $r \in \gen(\supp(\mu))$.
But also if $q \ast r$ is not in $\gen(\supp(\mu))$, then there exists some $p \in \supp(\mu)$ with $R_{\Delta}(p) \geq R_{\Delta} (q \ast r) \geq R_{\Delta}(r)$ and some inequality strict, again by Fact \ref{fac: gen in stable}(2), contradicting  $r \in \gen(\supp(\mu))$. Hence $\gen(\supp(\mu)) = \supp(\mu)$ by Theorem \ref{two}.

We now fix a larger monster model $\widehat{\mathcal{G}} \succ \mathcal{G}$ as above (and view $\mathcal{G}$ as a small elementary submodel of it).
Then, by Fact \ref{fac: group chunk}, we have that 
$$\widehat{\mathcal{H}} := \langle \supp(\mu)\rangle = \{ a \in \widehat{\mathcal{G}} : a \models p \textrm{ for some } p \in \supp(\mu) \}$$
 is a $\mathcal{G}$-type-definable subgroup of $\widehat{\mathcal{G}}$ and $\supp(\mu) = \gen(\supp(\mu))$ is precisely the set of generic types of $\widehat{\mathcal{H}}$ over $\mathcal{G}$.
%
%this is the set of generics over $\mathbb{M}$ of $H (\widehat{\mathbb{M}}) := \langle \supp(\mu)\rangle$, a subgroup of $G(\widehat{\mathbb{M}})$ which is type-definable over $\mathbb{M}$, where $\widehat{\mathbb{M}} \succeq \mathbb{M}$ is a larger $|\mathbb{M}|^+$-saturated monster model.
Note that the definition of $\widehat{\mathcal{H}}$ a priori uses all of the parameters in $\mathcal{G}$. We will show that it can be defined over a subset of $\mathcal{G}$ that is small with respect to $\mathcal{G}$, and that it is equal to the stabilizer of $\mu$.
Let $H(x)$ be a partial type over $\mathcal{G}$ defining $\widehat{\mathcal{H}}$, i.e.~$H(\widehat{\mathcal{G}}) = \widehat{\mathcal{H}}$. Given $p \in S_x(\mathcal{G})$, we let $\widehat{p} \in S_x(\widehat{\mathcal{G}})$ be its unique $\mathcal{G}$-definable extension, and given $\nu \in \mathfrak{M}_x(\mathcal{G})$ we let $\widehat{\nu} \in \mathfrak{M}_x(\widehat{\mathcal{G}})$ be the unique $\mathcal{G}$-definable extension of $\nu$ (by Fact \ref{fac: unique ext of def meas}).
We have the following sequence of observations.

\begin{enumerate}
	\item $p \ast q = r \iff \widehat{p} \ast \widehat{q} = \widehat{r}$ for any $p,q,r \in S_x(\mathcal{G})$.
	\item The same holds for measures, in particular $\widehat{\mu}$ is an idempotent of $\left(\mathfrak{M}_x(\widehat{\mathcal{G}}), \ast \right)$. 
	Indeed, assume $\mu, \nu \in \mathfrak{M}_x(\mathcal{G})$ are definable over some small $G' \prec \mathcal{G}$. Then $\widehat{\mu} * \widehat{\nu}$ is definable over $G'$ (by Proposition \ref{prop: pros pres under conv}) and extends $\mu * \nu$, hence $\widehat{\mu} * \widehat{\nu} = \widehat{\mu * \nu}$ by uniqueness of definable extensions (Fact \ref{fac: unique ext of def meas}).
	\item $\Stab_\mu(\widehat{\mathcal{G}}) = \Stab(\widehat{\mu})$ (by Proposition \ref{prop: stab type def} and definability of the measure).
	\item $\supp(\widehat{\mu}) = \{\widehat{p} : p \in \supp(\mu) \}$.
	
	\noindent Indeed, suppose $p \in \supp(\mu)$, but $\widehat{p} \notin \supp(\widehat{\mu})$, then there is some $\varphi(x,b) \in \widehat{p}$ such that $\widehat{\mu}(\varphi(x,b)) = 0$. In particular, $\models d_p \varphi(b)$, where $d_p(y) \in \mathcal{L}_y(c)$ for some finite tuple $c \subseteq \mathcal{G}$ is a $\varphi$-definition for $p$. By $|G|^+$-saturation of $\mathcal{G}$, we can find some $b' \in \mathcal{G}^{y}$ with $b' \equiv_{Gc} b$. By definability (and hence invariance) of $\widehat{\mu}$ over $G$, we have $\models d_p(b')$ and $ \mu(\varphi(x,b')) = \widehat{\mu}(\varphi(x,b')) = 0$. So $\varphi(x,b') \in p$, contradicting $p \in \supp(\mu)$.
	
\noindent	Conversely, suppose $r \in \supp(\widehat{\mu})$. As $\widehat{\mu}$ is definable over $G$, in particular it is non-forking over $G$, hence every type in its support is non-forking over $G$. In particular $r$ is non-forking over $G$, hence definable over $G$ by stability, so $r = \widehat{(r | _{\mathcal{G}})}$ and $r | _{\mathcal{G}}$ is clearly in $\supp(\mu)$.
	\item The generics of $H(x)$ over $\widehat{\mathcal{G}}$ are precisely $\{\widehat{p} : p \mbox{ is a generic of } H \mbox{ over } \mathcal{G}\}$. 
	
	By stability, every generic $r$ of $H(x)$ over $\widehat{\mathcal{G}}$ does not fork over $\mathcal{G}$, so it is definable over $\mathcal{G}$ and $r|_{\mathcal{G}}$ is a generic of $H(x)$ over $\mathcal{G}$, hence $r = \widehat{(r|_{\mathcal{G}})}$. Conversely, a definable (non-forking) extension of a generic type is generic.

	\item Hence, by (4) and (5), $\supp(\widehat{\mu})$ is precisely the set of the generics of $H(x)$ over $\widehat{\mathcal{G}}$, in particular $(\supp(\widehat{\mu}), \ast)$ is a topological group by Fact \ref{fac: top dyn in stable}(3).
	\item Viewed as a regular Borel measure on $S_x(\widehat{\mathcal{G}})$, $\widehat{\mu}$ restricted to $(\supp(\widehat{\mu}), \ast)$ is right-$\ast$-invariant.
	
	\noindent Indeed, $\left( \supp(\widehat{\mu}), * \right)$ is a group by (6), so for any $p \in \supp(\widehat{\mu})$, $p^{-1} \in \supp(\widehat{\mu})$ is well-defined. By regularity of the measure, it suffices to check $*$-invariance for  clopen subsets. Let $\varphi(x,\bar{b}) \in \mathcal{L}_x(\widehat{\mathcal{G}})$. Then for any $p \in \supp(\widehat{\mu})$ we have 
	$$\widehat{\mu}( \langle \varphi(x,\bar{b}) \rangle * p) = \widehat{\mu}\left(\{ q * p : \varphi(x,\bar{b}) \in q \} \right)$$
	$$ = \widehat{\mu} \left( \{q: \varphi(x, \bar{b}) \in q * p^{-1} \} \right) = \widehat{\mu}(\varphi(x \cdot c, \bar{b})),$$
	where $c \models p^{-1}|_{G \bar{b}}$. And by Corollary \ref{cor: meas on sup is invariant}, $\widehat{\mu}(\varphi(x \cdot c,\bar{b})) = \widehat{\mu}(\varphi(x,\bar{b}))$.
	
	\item By Fact \ref{fac: top dyn in stable}(4) for $\widehat{\mathcal{H}}$, there is a unique right-$\widehat{\mathcal{H}}$-invariant Keisler measure $\nu \in \mathfrak{M}_x(\widehat{\mathcal{G}})$ such that $\nu(H(x)) = 1$, $\supp(\nu)$ is the set of generics of $H(x)$ over $\widehat{\mathcal{G}}$, and $\nu \restriction_{\supp(\nu)}$ (viewed  as a Borel measure) is the Haar measure on the compact topological group $(\supp(\nu), \ast)$. 
	\item Thus $\supp(\widehat{\mu}) = \supp(\nu)$ using (6), and as both $\mu,\nu$ are right $*$-invariant restricting to $\supp(\nu)$, by uniqueness of the Haar measure we have $\widehat{\mu} \restriction_{\supp(\widehat{\mu})} = \nu \restriction_{\supp(\nu)}$, hence $\widehat{\mu} = \nu$ (as for any formula $\varphi(x) \in \cL_x(\widehat{\mathcal{G}})$ we have $\mu(\varphi(x)) = \mu(\varphi(x) \cap \supp(\mu))$ and the same for $\nu$, by Proposition \ref{prop: supp 1}(2)). 
	\item 
	By (8) we have $\widehat{\mathcal{H}} \subseteq \Stab(\nu) $ (the stabilizer in $\widehat{\mathcal{G}}$), and in fact $\widehat{\mathcal{H}} = \Stab(\nu)$ (as any two cosets of $\widehat{\mathcal{H}}$ in $\Stab(\nu)$ would give two disjoint sets of $\nu$-measure $1$). 
	Using (8) (and (3)) it follows that $\supp(\widehat\mu) \subseteq  \widehat{\mathcal{H}} = \Stab(\nu) = \Stab(\widehat{\mu}) = \Stab_{\mu}(\widehat{\mathcal{G}})$, so $\widehat \mu$ is a left- (and also right, by stability) invariant measure on the $G$-type-definable group $\Stab_\mu(\widehat{\mathcal{G}})$. Hence $\mu$ is a right-invariant measure on the  $G$-type-definable group $\Stab_\mu(\mathcal{G})$.
	\end{enumerate}
	\end{proof}
	\begin{remark}
		It was pointed out by the referee that type-definability of $\widehat{\mathcal{H}} $ over a \emph{small subset of $\mathcal{G}$} is immediate from  Hrushovski's theorem that in a stable theory $T$, any type-definable group is given by an intersection of at most $|T|$-many definable groups (see e.g.~\cite[Lemma 6.18]{pillay1996geometric}). However,  showing that $\widehat{\mathcal{H}}$ is the stabilizer appears to require some additional argument along the lines presented above.
	\end{remark}
	\begin{remark}
	Some of these results can be generalized for idempotent measures in NIP groups, and we hope to address it in future work.	
	\end{remark}

\section{Describing the convolution semigroup on finitely satisfiable measures as an Ellis semigroup}\label{sec: Ellis calc}
\subsection{Dynamics} We begin this section by recalling the construction of the Ellis semigroup. Let $X$ be a compact Hausdorff space and $S$ be a semigroup acting on $X$ by homeomorphisms. In particular, there is a map $\pi: S \times X \to X$ such that for each $s \in S$, the map $\pi_{s}:X \to X, x \mapsto \pi(s,x)$ is a homeomorphism. Let $X^X$ be the space of functions from $X$ to $X$ equipped with the product topology. Then, $\{\pi_{s}: s \in S\}$ is naturally a subset of $X^X$. Finally, the \emph{Ellis semigroup of the action $(X,S, \pi)$} is $\left(\cl \left(\left\{\pi_{s} : s \in S\right\} \right), \circ \right)$, where we take the closure of $\{ \pi_{s}: s \in S\}$ in $X^X$.  When the action map $\pi$ is clear, we will denote this  semigroup as $E(X,S)$. 

Let now $T$ be a first order theory expanding a group, $\mathcal{G}$ a saturated model of $T$, and $G \prec \mathcal{G}$ a small elementary substructure. 
Recall that $S_x(\mathcal{G},G)$ denotes the set of global types finitely satisfiable in $G$. There is a natural action of $G$ on $S_{x}(\mathcal{G},G)$ via $g \cdot p = \{\varphi(x): \varphi(g^{-1} \cdot x) \in p\}$. 

\begin{fact}[Newelski \cite{N1}]\label{fac: New isom} There exists a semigroup isomorphism (which is also a homeomorphism of compact spaces) $E(S_{x}(\mathcal{G},G),G) \cong (S_{x}(\mathcal{G},G),*)$.
%, where the semigroup operation on $(S_{x}(\mathcal{G},G),*)$ is defined as follows: for any $p,q \in S_{x}(\mathcal{G},G)$, $p * q := tp(a \cdot b/\mathcal{G})$ where $b \models q$ and $a$ realizes the unique coheir of $p$ over $\mathcal{G}b$ such that $tp(a/\mathcal{G}b)$ is finitely satisfiable in $G$.
\end{fact}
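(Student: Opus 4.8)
The plan is to realize the isomorphism explicitly as the map $\Theta \colon \bigl(S_x(\mathcal{G},G),*\bigr) \to E(S_x(\mathcal{G},G),G)$ sending a type $p$ to the left-convolution operator $L_p \colon q \mapsto p * q$. Write $X := S_x(\mathcal{G},G)$. The first step is to locate the generating translations inside this family. By Proposition \ref{prop: calc conv}(3), $\delta_a * q = \{\varphi(x) : \varphi(a\cdot x) \in q\}$, so taking $a = g^{-1}$ gives $\pi_g(q) = g\cdot q = \delta_{g^{-1}} * q = L_{\delta_{g^{-1}}}(q)$. Hence, after reindexing $h = g^{-1}$, we have $\{\pi_g : g \in G\} = \{L_{\delta_h} : h \in G\}$ as sets; as a consistency check, one computes $\pi_g \circ \pi_h = \pi_{hg}$, matching $L_{\delta_h}\circ L_{\delta_{h'}} = L_{\delta_{hh'}}$ via Proposition \ref{prop: calc conv}(2).

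Next I would show $\Theta$ is a well-defined continuous bijection onto $E(X,G)$. Each $L_p$ is a genuine self-map of $X$ since $(X,*)$ is a closed sub-semigroup by Fact \ref{NTM}. Crucially, $p \mapsto L_p$ is continuous into $X^X$: for each fixed $q$ the coordinate $p \mapsto L_p(q) = p * q$ is exactly the map $-* q$, continuous by the left-continuity in Fact \ref{NTM}, and continuity in the product topology is continuity coordinatewise. Since $X$ is compact, the image $\{L_p : p \in X\}$ is compact, hence closed in $X^X$. As the realized types $\{\delta_h : h \in G\}$ are dense in $X$ (this is precisely finite satisfiability in $G$), their continuous image $\{\pi_g\}$ is dense in $\{L_p : p\in X\}$; combined with closedness this yields $\{L_p : p \in X\} = \cl\{\pi_g\} = E(X,G)$. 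Injectivity is immediate by evaluating at $\delta_e$: if $L_p = L_{p'}$ then $p = p*\delta_e = p'*\delta_e = p'$ by Proposition \ref{prop: calc conv}(1). A continuous bijection between compact Hausdorff spaces is a homeomorphism.

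Finally, that $\Theta$ respects the operations is where associativity enters: for any $r \in X$,
\begin{equation*}
(L_p \circ L_q)(r) = p*(q* r) = (p* q)* r = L_{p* q}(r),
\end{equation*}
so $\Theta(p)\circ\Theta(q) = \Theta(p* q)$, using associativity of $*$ on $X$ from Fact \ref{NTM}. Thus $\Theta$ is simultaneously a semigroup isomorphism and a homeomorphism, as required.

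I expect the only genuinely delicate point to be the identification $\cl\{\pi_g\} = \{L_p : p \in X\}$, which is exactly where left-continuity of convolution is indispensable: without it, $p \mapsto L_p$ need not be continuous, and one could not conclude that the closure of the translations is faithfully parametrized by all of $X$ rather than by some larger or ill-behaved family of functions. The remaining verifications — the $g \mapsto g^{-1}$ bookkeeping, injectivity via the unit, and compatibility with composition — are routine consequences of the facts already established.
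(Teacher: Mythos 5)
Your proof is correct. A point of comparison worth making explicit: the paper never proves this statement --- it is quoted as a Fact from Newelski's work --- so the right benchmark inside the paper is its own generalization to measures, Theorem \ref{thm: Ellis grp iso}, and your argument is precisely the type-space specialization of that proof's strategy. There, too, the isomorphism is realized by left convolution $\rho(\nu) = \nu * -$, injectivity is obtained by evaluating at $\delta_e$ (Lemma \ref{lem: rho inj}), and the identification of the image with the Ellis semigroup rests on finite satisfiability together with left-continuity of $*$. The simplification your setting affords, and which you exploit correctly, is that for types the density of $\{\delta_g : g \in G\}$ in $S_x(\mathcal{G},G)$ is a literal restatement of finite satisfiability, so compactness of $X$ plus coordinatewise continuity of $p \mapsto L_p$ (which, as you say, is exactly the left-continuity of Fact \ref{NTM}) identifies $\{L_p : p \in X\}$ with $\cl\{\pi_g : g \in G\}$ in one stroke; in the measure case the paper must instead run a smooth-measure approximation argument for the inclusion of the image in the Ellis semigroup (Lemma \ref{lem: image of rho}) and a separate net-limit argument for surjectivity (Lemma \ref{lem: surj}), and it needs NIP to have left-continuity of convolution at all, whereas your argument for types works in an arbitrary theory expanding a group, exactly as the Fact asserts. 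Your bookkeeping is also right: $\pi_g = L_{\delta_{g^{-1}}}$ because the paper's action is $g \cdot p = \{\varphi(x) : \varphi(g^{-1} \cdot x) \in p\}$, the resulting identity $\pi_g \circ \pi_h = \pi_{hg}$ is consistent with $L_{\delta_h} \circ L_{\delta_{h'}} = L_{\delta_{h h'}}$, and the homomorphism property $L_p \circ L_q = L_{p*q}$ uses associativity on $S_x(\mathcal{G},G)$, which is available from Fact \ref{NTM}.
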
 

In this section, we provide an analogous description for the convolution semigroup on finitely satisfiable measures in NIP theories. Recall that $\mathfrak{M}_x(\mathcal{G},G) \subseteq \mathfrak{M}_x(\mathcal{G})$ is the collection of global measures finitely satisfiable in $G$, and this space of measures is naturally identified with a closed convex subset of a real topological vector space (of all bounded real-valued measures on $S_x(\mathcal{G})$). We identify $G$ with the set $\{\delta_g : g \in G \} \subseteq \mathfrak{M}_x(\mathcal{G},G)$, and let $\conv(G)$ denote the convex hull of $G$. There is a natural semigroup action of $\conv(G)$ on $\mathfrak{M}_{x}(\mathcal{G},G)$: for any $\sum_{i=1}^{n} r_i \delta_{g_i} \in \conv(G)$ (with $g_i \in G$ and $r_i \in \mathbb{R}_{\geq 0}, \sum_{i=1}^n r_i = 1$), $\mu \in \mathfrak{M}_x(\mathcal{G},G)$ and $\varphi(x) \in \mathcal{L}_{x}(\mathcal{G})$, we define $\left( \sum_{i=1}^{n} r_i \delta_{g_i} \right) \cdot \mu \in \mathfrak{M}_x(\mathcal{G},G)$ by
\begin{equation*}
\left( \left(\sum_{i=1}^{n} r_i \delta_{g_i} \right) \cdot \mu \right) (\varphi(x)) = \sum_{i=1}^{n} r_i \mu( \varphi( g_i \cdot x)).
\end{equation*} 

For the rest of this section, we will denote elements of $\conv(G)$ as $k$, the semigroup action described above as $\pi : \conv(G) \times \mathfrak{M}_x(\mathcal{G},G) \to \mathfrak{M}_x(\mathcal{G},G)$, and the map $\mu \mapsto \pi(k,\mu)$ as  $\pi_{k}$. It is not difficult to see that for every $k \in \conv(G)$, the map $\pi_{k}$ is continuous. Therefore, we can consider the Ellis semigroup of this semigroup action, namely $E \left(\mathfrak{M}_{x}(\mathcal{G},G),\conv(G) \right)$.

We will show that if $T$ is NIP, then this Ellis semigroup is isomorphic to the convolution semigroup of global measures finitely satisfiable in $G$, i.e.~$(\mathfrak{M}_{x}(\mathcal{G},G),*)$ (Theorem \ref{thm: Ellis grp iso}). We demonstrate that that these two semigroups are isomorphic by considering the map $\rho:\mathfrak{M}_{x}(\mathcal{G},G) \to \mathfrak{M}_{x}(\mathcal{G},G)^{\mathfrak{M}_{x}(\mathcal{G},G)}$ defined by $\rho(\nu) = \rho_{\nu} := \nu * -$, and proving that the image of $\rho$ is precisely the Ellis semigroup. Before continuing, we check that $\rho$ is well-defined, and that $\mathfrak{M}_{x}(\mathcal{G},G)$ is a semigroup. 

\begin{proposition}\label{NIP:measure} Let $T$ be NIP and assume that $\mu \in \mathfrak{M}_{x}(\mathcal{G},G)$. Then:
\begin{enumerate}
\item $\mu$ is Borel-definable over $G$;
\item for any $\nu \in \mathfrak{M}_{x}(\mathcal{G},G)$, $\mu * \nu \in \mathfrak{M}_{x}(\mathcal{G},G)$;
\item the operation $*$ on $\mathfrak{M}_{x}(\mathcal{G},G)$ is associative, hence $(\mathfrak{M}_{x}(\mathcal{G},G),*)$ is a semigroup.
\end{enumerate}
\end{proposition}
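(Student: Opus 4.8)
For (1) I would simply chain together implications already recorded in Fact \ref{fac: props of measures relation}: since $\mu$ is finitely satisfiable in $G$ it is $G$-invariant by Fact \ref{fac: props of measures relation}(1)(d), and since $T$ is NIP a $G$-invariant measure is Borel-definable over $G$ by Fact \ref{fac: props of measures relation}(2)(a). For (2), Borel-definability of $\mu$ from (1) guarantees that for every $\varphi$ the level function $F_\mu^{\varphi'}$ is Borel on all of $S_y(G)$, in particular on $\supp(\nu_y|_G)$, so $(\mu,\nu)$ is $*$-Borel and $\mu * \nu$ is defined (Definition \ref{def: conv}). That $\mu*\nu$ is again finitely satisfiable in $G$ is then exactly Proposition \ref{prop: pros pres under conv}(2). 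Hence $\mu * \nu \in \mathfrak{M}_x(\mathcal{G},G)$, so in particular $*$ is a well-defined binary operation on $\mathfrak{M}_x(\mathcal{G},G)$.

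\textbf{Part (3).} Fix $\mu,\nu,\lambda \in \mathfrak{M}_x(\mathcal{G},G)$; by (1) and (2) all of $\mu,\nu,\lambda,\mu*\nu,\nu*\lambda$ are finitely satisfiable in $G$, hence Borel-definable over $G$. The plan is to show that both $(\mu*\nu)*\lambda$ and $\mu*(\nu*\lambda)$ agree, as measures in the single variable $x$, with the pushforward of the triple product $\mu_x \otimes \nu_y \otimes \lambda_z \in \mathfrak{M}_{xyz}(\mathcal{G})$ under group multiplication $(x,y,z) \mapsto x \cdot y \cdot z$; that is, for every $\varphi(x) \in \mathcal{L}_x(\mathcal{G})$,
\begin{equation*}
(\mu * \nu) * \lambda (\varphi(x)) = \mu * (\nu * \lambda)(\varphi(x)) = (\mu_x \otimes \nu_y \otimes \lambda_z)(\varphi(x \cdot y \cdot z)).
\end{equation*}
The triple product is unambiguous because $\otimes$ is associative for invariant measures in NIP (Fact \ref{fac: meas commute}(5)), so the displayed middle term is the only thing that needs to be hit by each side.

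To verify the left-hand equality I would unfold $(\mu*\nu)*\lambda$ twice using Borel-definability: first expand the outer convolution via $\sigma := \mu*\nu$ to get $\int_{z} \sigma(\varphi(x\cdot z))\, d\lambda$, then expand $\sigma(\varphi(x\cdot c)) = \int_y \mu(\varphi(x \cdot y \cdot c))\, d\nu$, using associativity of the group operation to rewrite $(x\cdot y)\cdot c$ as $x \cdot y \cdot c$. This produces the iterated integral $\int_z \int_y \mu(\varphi(x\cdot y\cdot z))\, d\nu\, d\lambda$, which is precisely $(\mu_x \otimes (\nu_y \otimes \lambda_z))(\varphi(x\cdot y\cdot z))$ by the definition of $\otimes$ as iterated integration. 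For the right-hand side I would expand $\mu*(\nu*\lambda)$ once, via Borel-definability of $\mu$, to get $\int_w \mu(\varphi(x\cdot w))\, d\tau$ with $\tau := \nu*\lambda$, and then use that $\tau$ is, by the very definition of convolution, the pushforward of $\nu_y \otimes \lambda_z$ under the continuous multiplication map on type spaces; rewriting the integral over $\tp(w/G)$ as an integral over $\tp(y,z/G)$ of $\mu(\varphi(x \cdot y \cdot z))$ again gives the same quantity. Comparing the two yields associativity.

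\textbf{Main obstacle.} The conceptual content is routine — convolution is the pushforward of $\otimes$ along group multiplication, and associativity of $*$ is forced by associativity of $\otimes$ together with associativity of the group — but the technical heart is the Fubini and measurability bookkeeping: one must check that $\mu(\varphi(x \cdot y \cdot z))$ is a genuinely Borel function of $\tp(y,z/G)$, that the iterated $\int_z\int_y$ may be identified with integration against $\nu_y \otimes \lambda_z$, and that the pushforward description of $\tau$ transfers correctly to the Borel (non-formula) integrand $\mu(\varphi(x \cdot w))$. This is exactly where NIP enters, through part (1): Borel-definability of all three measures both makes every product defined and lets all integrals run over the full type spaces, so that $\tilde\otimes$ coincides with $\otimes$ by Proposition \ref{prop: gen prod ext type prod}(1), removing any complication with restricting to supports.
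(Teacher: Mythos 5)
Your parts (1) and (2) coincide with the paper's proof: it cites Fact \ref{fac: props of measures relation}(2a) for Borel-definability and Proposition \ref{prop: pros pres under conv}(2) for preservation of finite satisfiability, exactly as you do. For part (3), your first half is also the paper's: unfolding $(\mu*\nu)*\lambda(\varphi(x))$ via the level-function identity $F^{\varphi'}_{\mu*\nu,G}=F^{\theta}_{\mu\otimes\nu,G}$ (the paper's step (a)) gives $((\mu_x\otimes\nu_y)\otimes\lambda_z)(\varphi(x\cdot y\cdot z))$, and both you and the paper then invoke Fact \ref{fac: meas commute}(5). Where you genuinely diverge is the return trip from $\mu_x\otimes(\nu_y\otimes\lambda_z)$ to $\mu*(\nu*\lambda)$: the paper takes smooth extensions $\widehat{\lambda}$ of $\lambda|_{G}$ and $\widehat{\nu}$ of $\nu|_{H_1}$ and shuttles the integral back and forth using the commutation facts (Fact \ref{fac: meas commute}(1),(3),(4)) together with the restriction claim $(\widehat{\nu}\ast\widehat{\lambda})|_{G}=(\nu\ast\lambda)|_{G}$ from \cite{GanCon2}, whereas you observe that $(\nu*\lambda)|_{G}$ is the pushforward of $(\nu_y\otimes\lambda_z)|_{G}$ under the continuous multiplication map $m:S_{yz}(G)\to S_{w}(G)$, $\tp(b,c/G)\mapsto\tp(b\cdot c/G)$, and apply change of variables to the Borel integrand $F^{\varphi'}_{\mu,G}$, using $F^{\varphi'}_{\mu,G}\circ m=F^{\rho}_{\mu,G}$ (group associativity). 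This is sound: $m_*\left((\nu\otimes\lambda)|_{G}\right)$ and $(\nu*\lambda)|_{G}$ agree on clopen sets by the definition of $*$, and both are regular Borel measures (pushforward under a continuous map of compact Hausdorff spaces preserves regularity), so they coincide on all Borel sets, after which change of variables for bounded Borel functions is standard measure theory; Proposition \ref{prop: gen prod ext type prod}(1) lets all integrals run over full type spaces, as you note. Your route is shorter and avoids the smooth-approximation machinery entirely, at the price of the one Borel-level verification just described; the paper's route stays within the formula-level calculus of level functions and its established commutation lemmas. Two minor wrinkles in your write-up: the iterated integral $\int_z\int_y$ is \emph{by definition} $((\mu\otimes\nu)\otimes\lambda)$, not $\mu\otimes(\nu\otimes\lambda)$ --- the identification of the two bracketings is precisely Fact \ref{fac: meas commute}(5), which you did invoke, so this is only phrasing; and no genuine Fubini theorem is needed (fortunately, since Fubini can fail for non-commuting Keisler measures) --- the inner integral is Borel in $\tp(z/G)$ exactly because $\mu\otimes\nu$ is itself Borel-definable over $G$ by NIP. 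Both arguments share the indispensable external input: associativity of $\otimes$ for invariant measures in NIP theories from \cite{GanCon2}.
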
 

\begin{proof} $(1)$ follows from Fact \ref{fac: props of measures relation}(2a) while $(2)$ follows from Proposition \ref{prop: pros pres under conv}(2). 

 $(3)$  We show that the operation $*$ is associative on $\mathfrak{M}_{x}^{\inva}(\mathcal{\mathcal{G}},G)$ (note that it is closed under $*$, as under the NIP assumption the $\otimes$-product of two invariant measures is invariant, see e.g. \cite[Section 7.4]{Guide}). The proof is similar, but not identical, to the proof that $\otimes$ is associative on invariant measures in NIP theories \cite{GanCon2}. Fix a formula $\varphi(x) \in \mathcal{L}_{x}(\mathcal{U})$. Let $\theta(x,y;z) := \varphi(x \cdot y \cdot z)$ and $\rho(x;y,z) := \varphi(x \cdot y \cdot z)$ (where $y,z$ are variables of the same sort as $x$). Assume that $\mu, \nu, \lambda \in \mathfrak{M}_{x}^{\inva}(\mathcal{\mathcal{G}},G)$ --- all Borel-definable over $G$ by Fact \ref{fac: props of measures relation}(2a). Without loss of generality, we may assume that $G$ contains all of the parameters from $\varphi$.

Let $\widehat{\lambda}$ be a smooth extension of $\lambda|_{G}$ such that $\widehat{\lambda}$ is smooth over some small $H_1 \succeq G$. Let $\widehat{\nu}$ be a smooth extension of $\nu|_{H_1}$ such that $\widehat{\nu}$ is smooth over some small model $H_2$ such that $G \prec H_1 \prec H_2$.
We are following the notation of Section \ref{sec: def of conv}, in particular $\varphi'(x;z) = \varphi(x \cdot z)$. We have:
\begin{enumerate}[(a)]
\item $F_{(\mu * \nu)_x,G}^{\varphi'(x;z)}(r) = F_{\mu_x \otimes \nu_y,G}^{\theta(x,y;z)}(r)$ for all $r \in S_z(G)$ --- as for any $b \in \mathcal{U}^{z}$ realizing $r$ we have $F_{(\mu * \nu)_x,G}^{\varphi'}(r) = (\mu * \nu)_x(\varphi(x \cdot b)) = \mu_x \otimes \nu_y (\varphi(x \cdot y \cdot b)) = F_{\mu \otimes \nu}^{\theta}(r)$.
\item $F_{\widehat{\nu}_y\otimes\widehat{\lambda}_z,H_2}^{\rho^{*}(y,z;x)}(p)=F_{(\widehat{\nu}*\widehat{\lambda})_z, H_2}^{(\varphi')^{*}(z;x)}(p)$ for all $p \in S_x(H_2)$ --- similar to (a);
	\item $(\widehat{\nu} \otimes \widehat{\lambda})|_{G}=(\nu \otimes \lambda)|_{G}$, hence $(\widehat{\nu}*\widehat{\lambda})|_{G}=(\nu*\lambda)|_{G}$ --- by the Claim in the proof of \cite[Theorem 2.2]{GanCon2}. 
\end{enumerate} 
 Note that $\mu$ is invariant over both $G$ and $H_2$, hence for any 
measure $\gamma \in \mathfrak{M}_{yz}(\mathcal{\mathcal{G}})$ and formula $\psi(x;y,z) \in \mathcal{L}(G)$
 we have (as in Proposition \ref{prop: int over diff mod}):
\begin{enumerate}
  \item[(d)]  $ \int_{S_{yz}(G)}F_{\mu,G}^{\psi}d(\gamma|_{G}) = \int_{S_{yz}(H_2)}F_{\mu,H_2}^{\psi}d(\gamma|_{H_2})$.
\end{enumerate}
Finally, we also have:
\begin{enumerate}
	\item[(e)] 	$\widehat{\nu} \otimes \widehat{\lambda}$ is smooth over $H_2$ (by Fact \ref{fac: meas commute}(4));
	\item[(f)] $\widehat{\nu} * \widehat{\lambda}$ is dfs over $H_2$ (by Proposition \ref{prop: pros pres under conv}(1) and (2));
\end{enumerate}
Using these observations we calculate:
\begin{gather*}
	[(\mu*\nu)*\lambda](\varphi(x))=[(\mu*\nu)_x\otimes\lambda_z](\varphi(x\cdot z))=\int_{S_{z}(G)}F_{\mu*\nu,G}^{\varphi'(x;z)}d(\lambda_z|_{G})\\
	\overset{\textrm{(a)}}{=}\int_{S_{z}(G)}F_{\mu_x\otimes\nu_y,G}^{\theta(x,y;z)}d(\lambda_z|_{G})\\=[(\mu_x\otimes\nu_y)\otimes\lambda_z](\theta(x,y;z))=[(\mu_x \otimes \nu_y)\otimes\lambda_z](\varphi(x\cdot y\cdot z))\\
	=[\mu_x\otimes(\nu_y\otimes\lambda_z)](\varphi(x\cdot y\cdot z))\textrm{ (by Fact \ref{fac: meas commute}(5))}\\
	=\int_{S_{yz}(G)}F_{\mu_x,G}^{\rho(x;y,z)}d \left((\nu_y\otimes\lambda_z)|_{G} \right) \overset{\textrm{(c)}}{=} \int_{S_{yz}(G)}F_{\mu_x,G}^{\rho(x;y,z)}d \left((\widehat{\nu}_y\otimes \widehat{\lambda}_z)|_{G} \right)\\
	=\int_{S_{yz}(H_{2})}F_{\mu_x,H_2}^{\rho(x;y,z)}d \left((\widehat{\nu}_y\otimes\widehat{\lambda}_z)|_{H_2} \right) \\
	\textrm{(by (d) with }\gamma_{y,z} := \widehat{\nu}_y \otimes \widehat{\lambda}_z \textrm{ and } \psi(x;y,z) :=\rho(x;y,z) \textrm{)} \\
	=\int_{S_{x}(H_{2})}F_{\widehat{\nu}_y\otimes\widehat{\lambda}_z, H_2}^{\rho^{*}(y,z;x)}d(\mu_x|_{H_2})  \textrm{ (by (e) and Fact \ref{fac: meas commute}(1))}\\
	\overset{\textrm{(b)}}{=}\int_{S_{x}(H_{2})}F_{(\widehat{\nu}*\widehat{\lambda})_z,H_2}^{(\varphi')^{*}(z;x)}d(\mu_x|_{H_2})\\
=\int_{S_{z}(H_{2})}F_{\mu_x,H_2}^{\varphi'(x;z)}d \left((\widehat{\nu}*\widehat{\lambda})_z|_{H_2} \right) \textrm{(by (f) and Fact \ref{fac: meas commute}(3))}\\
=\int_{S_{z}(G)}F_{\mu_x,G}^{\varphi'(x;z)}d \left((\widehat{\nu}*\widehat{\lambda})_z|_{G} \right) \textrm{ (by (d) with } \gamma_z:=(\widehat{\nu} * \widehat{\lambda} )_z\textrm{ and } \psi(x;z):=\varphi'(x;z) \textrm{)} \\
	\overset{\textrm{(c)}}{=}\int_{S_{z}(G)}F_{\mu_x, G}^{\varphi'(x;z)}d \left((\nu*\lambda)_z|_{G} \right) \\
	=\mu*(\nu*\lambda)(\varphi(x)).
\end{gather*}
\end{proof}

Hence the map  $\rho: \mathfrak{M}_{x}(\mathcal{G},G) \to \mathfrak{M}_{x}(\mathcal{G},G)^{\mathfrak{M}_{x}(\mathcal{G},G)}$ is well-defined. In the next subsection we show that it is also left-continuous. 

\subsection{Left-continuity of convolution}\label{sec: left-cont of conv}
We begin with a general continuity result in arbitrary NIP theories. Let $T$ be an NIP theory, $\mathcal{U}$ a monster model of $T$, and $M$ a small elementary substructure of $\mathcal{U}$.
% We show that for any $\nu \in \mathfrak{M}_{y}(\mathcal{U},M)$ and any formula $\varphi(x;y) \in \mathcal{L}_{xy}(\mathcal{U})$, the map $-\otimes \nu(\varphi(x;y)):\mathfrak{M}_{x}(\mathcal{U},M) \to [0,1]$ is continuous.

\begin{proposition}[T NIP]\label{continuity} Let $M \prec \mathcal{U}$ and let  $\mathfrak{M}_{x}^{\inva}(\mathcal{U},M)$ be the closed set of global $M$-invariant measures (Definition \ref{def: props of measures}). If $\nu \in \mathfrak{M}_{y}(\mathcal{U})$ and $\varphi(x;y)$ is any partitioned $\mathcal{L}_{xy}(\mathcal{U})$ formula, then the map $-\otimes\nu(\varphi(x;y)):\mathfrak{M}^{\inva}_{x}(\mathcal{U},M)\to[0,1]$ is continuous. 
\end{proposition}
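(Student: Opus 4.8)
The plan is to reduce the statement to the case where $\nu$ is \emph{smooth}, in which continuity becomes transparent, by exploiting the fact that $\mu\otimes\nu(\varphi(x;y))$ depends on $\nu$ only through its restriction to a fixed small model. First I would fix a small model $M'\succeq M$ containing all parameters of $\varphi$, so that $\varphi(x;y)\in\mathcal{L}_{xy}(M')$. Every $\mu\in\mathfrak{M}_{x}^{\inva}(\mathcal{U},M)$ is then $M'$-invariant, hence Borel-definable over $M'$ by Fact \ref{fac: props of measures relation}(2a) (this is where NIP enters), and \emph{uniformly} so: the single model $M'$ works for all $\mu$ at once. Consequently, by Proposition \ref{prop: gen prod ext type prod}(1) together with Definition \ref{def: gen prod}, for each such $\mu$ we have $\mu\otimes\nu(\varphi(x;y))=\int_{S_y(M')}F_{\mu,M'}^{\varphi}\,d(\nu|_{M'})$, an expression that manifestly depends on $\nu$ only through $\nu|_{M'}$.

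Next I would invoke Fact \ref{fac: NIP ext to smooth} to choose a measure $\nu'$ that is smooth over some $N\succeq M'$ and satisfies $\nu'|_{M'}=\nu|_{M'}$ (apply the fact to $\nu|_{M'}\in\mathfrak{M}_y(M')$). By the formula just displayed and $\nu'|_{M'}=\nu|_{M'}$, this gives the \emph{exact} equality $\mu\otimes\nu(\varphi)=\mu\otimes\nu'(\varphi)$ for every $\mu\in\mathfrak{M}_{x}^{\inva}(\mathcal{U},M)$, so it suffices to prove continuity of $\mu\mapsto\mu\otimes\nu'(\varphi)$. Since $\mu$ is Borel-definable over $M'\subseteq N$, hence over $N$, and $\nu'$ is smooth over $N$, Fact \ref{fac: meas commute}(1) lets me interchange the order of integration:
\begin{equation*}
\mu\otimes\nu'(\varphi(x;y))=\int_{S_y(N)}F_{\mu,N}^{\varphi}\,d(\nu'|_N)=\int_{S_x(N)}F_{\nu',N}^{\varphi^{*}}\,d(\mu|_N).
\end{equation*}
The point of the swap is that the integrand $F_{\nu',N}^{\varphi^{*}}$ is now a \emph{fixed} function, independent of $\mu$, and it is continuous on $S_x(N)$ because $\nu'$, being smooth, is definable. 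The restriction map $\mu\mapsto\mu|_N$ is continuous, and integration of a fixed continuous function against a varying measure is continuous for the topology on $\mathfrak{M}_x(N)$ by the very definition of that topology; composing these two maps yields continuity of $\mu\mapsto\mu\otimes\nu'(\varphi)=\mu\otimes\nu(\varphi)$.

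The genuine content, and the only place tameness is used, is the passage to the smooth measure $\nu'$: without NIP one cannot guarantee a smooth extension of $\nu|_{M'}$, and it is precisely smoothness that simultaneously makes the swapped integrand both independent of $\mu$ and continuous. I would verify two routine points that keep the reduction exact rather than merely approximate: that $\mu\otimes\nu'(\varphi)$ computed over $M'$ coincides with its value computed over $N$ (Proposition \ref{prop: int over diff mod}), so that the two displayed descriptions of $\mu\otimes\nu'(\varphi)$ are consistent; and that $M$-invariance of $\mu$ indeed upgrades to $M'$- and $N$-invariance, hence to Borel-definability over both models. I do not anticipate a serious obstacle, since the argument produces the equality $\mu\otimes\nu=\mu\otimes\nu'$ on the nose, so no limiting or uniform-approximation step is required.
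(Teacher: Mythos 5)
Your proof is correct, and its first half coincides with the paper's own argument: both fix a small model containing the parameters of $\varphi$, use NIP (Fact \ref{fac: props of measures relation}(2a)) to get Borel-definability of every $M$-invariant $\mu$ over that model --- so that $\mu\otimes\nu(\varphi(x;y))$ depends on $\nu$ only through its restriction to it --- then replace $\nu$ by a smooth extension of that restriction via Fact \ref{fac: NIP ext to smooth}, and swap the order of integration using Fact \ref{fac: meas commute}(1). The difference is in the endgame. The paper invokes smooth $\Rightarrow$ finitely approximated (Fact \ref{fac: props of measures relation}(1a)), replaces $F_{\widehat{\nu},N}^{\varphi^*}$ up to $\varepsilon$ by the average function of a $(\varphi^*,\varepsilon)$-approximation $\overline{b}\subseteq N$, and concludes that $-\otimes\nu(\varphi(x;y))$ is a uniform limit of the continuous functions $\mu\mapsto\frac{1}{n}\sum_{i=1}^n\mu(\varphi(x,b_i))$, hence continuous. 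You instead invoke smooth $\Rightarrow$ definable, so that the swapped integrand $F_{\nu',N}^{\varphi^{*}}$ is a single continuous function on $S_x(N)$ independent of $\mu$, and conclude by composing the continuous restriction map $\mu\mapsto\mu|_N$ with integration against this fixed continuous function, which is continuous by the characterization of the topology on $\mathfrak{M}_x(N)$ given in Section \ref{sec: prelims on Keisl meas}. Your route eliminates the $\varepsilon$-bookkeeping entirely; what it buys this simplification with is the standard fact (stated in the preliminaries, so you are entitled to it) that the weak topology on measures makes $\mu\mapsto\int f\,d\mu$ continuous for \emph{every} continuous $f$, not just for characteristic functions of clopen sets. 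That fact is essentially what the paper's uniform-limit step reproves in this particular instance, since on the Stone space $S_x(N)$ every continuous function is a uniform limit of finite linear combinations of characteristic functions of clopen sets. Both proofs use NIP in exactly the same two places (Borel-definability of invariant measures and existence of smooth extensions), so neither is more general than the other; yours is simply the cleaner way to finish.
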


\begin{proof}
Choose $N_{0} \prec \cU$ small and such that $M \preceq N_{0}$, and $N_{0}$ contains the parameters of $\varphi$. Then, choose a small $N \prec \cU$ such that $N_{0} \preceq N$ and there exists some $\widehat{\nu}\in\mathfrak{M}_{y}(\mathcal{U})$ such that $\widehat{\nu}|_{N_{0}}=\nu|_{N_{0}}$ and $\widehat{\nu}$ is smooth over $N$ (by Fact \ref{fac: NIP ext to smooth}). Fix $\varepsilon \in \mathbb{R}_{>0}$, by Fact \ref{fac: props of measures relation}(1a) let $\overline{b} = (b_1, \ldots, b_n)$ be some $(\varphi^{*},\varepsilon)$-approximation for $\widehat{\nu}$ over $N$ (where $\varphi^*(y;x) = \varphi(x;y)$ and $\overline{b}$ is some element in $(N^{y})^{<\omega}$, see Definition \ref{def: props of measures}(7)). Note that every $\mu\in\mathfrak{M}^{\inva}_{x}(\mathcal{U},M)$ is invariant over both $N_{0}$ and $N$. Then we have (the last equality holds as in the proof of Proposition \ref{prop: int over diff mod}):

\begin{equation*}
    \mu\otimes\nu(\varphi(x;y))=\int_{S_{y}(N_{0})}F_{\mu,N_{0}}^{\varphi}d(\nu|_{N_{0}})=\int_{S_{y}(N_{0})}F_{\mu,N_{0}}^{\varphi}d(\widehat{\nu}|_{N_{0}})=\int_{S_{y}(N)}F_{\mu,N}^{\varphi}d(\widehat{\nu}|_{N}).
\end{equation*}
As $\widehat{\nu}$ is smooth over $N$, by Fact \ref{fac: meas commute}(1) we have
\begin{equation*} \int_{S_{y}(N)}F_{\mu,N}^{\varphi}d(\widehat{\nu}|_{N}) =  \int_{S_{x}(N)}F_{\widehat{\nu},N}^{\varphi^{*}}d(\mu|_{N}).
\end{equation*} 

Note that $F^{\varphi^*}_{\Av_
{\overline{b}}, N} (p) = \frac{1}{n}\sum_{i=1}^{n} \chi_{\{ r \in S_x(N) : \varphi(x,b_i) \in r \}}(p)$ for every $p \in S_x(N)$, where $\chi$ is the characteristic function.  Now, using that $\bar{b} \subseteq N$ is a $(\varphi^{*},\varepsilon)$-approximation  for $\widehat{\nu}$, we have the following (note that we identify $\varphi(x,b_i)$ with the set of types satisfying it over $N$ in the first step, and over $\cU$ in the second step).
$$
    \int_{S_{x}(N)}F_{\widehat{\nu},N}^{\varphi^{*}}d(\mu|_{N}) \approx_{\varepsilon}\int_{S_{x}(N)}F_{\Av_{\overline{b}}, N}^{\varphi^{*}}d(\mu|_{N})$$
    $$ = \int_{S_x(N)} \left(  \frac{1}{n} \sum_{i=1}^{n} \chi_{\varphi(x,b_i)} \right) d(\mu|_N) $$
    $$ = \frac{1}{n}\sum_{i=1}^{n} \left( \int_{S_x(N)} \chi_{\varphi(x,b_i)}  d(\mu|_N)  \right) = \frac{1}{n}\sum_{i=1}^{n} \mu|_N(\varphi(x,b_i))$$
    $$ =\frac{1}{n}\sum_{i=1}^{n}\int_{S_{x}(\mathcal{U})}\chi_{\varphi(x,b_i)}  d\mu.
$$
 Clearly, each map $\int \chi_{\varphi(x,b_{i})}:\mathfrak{M}_{x}(\mathcal{U})\to[0,1]$ is continuous by the definition of the topology on the space of measures. Therefore, each map $\int\chi_{\varphi(x,b_{i})}:\mathfrak{M}^{\inva}_{x}(\mathcal{U},M)\to[0,1]$ is continuous, hence their sum is continuous as well. Since the choice of $\overline{b}$ is independent of the choice of $\mu$, we have
\begin{equation*}
    \sup_{\mu\in\mathfrak{M}^{\inva}_{x}(\mathcal{U},M)} \left \lvert \mu\otimes\nu(\varphi(x;y))-\frac{1}{n}\sum_{i=1}^{n}\int_{S_{x}(\mathcal{U})}\chi_{\varphi(x,b_{i})}d\mu \right \rvert<\varepsilon.
\end{equation*}
Therefore, the map $-\otimes\nu(\varphi(x;y))$ is a uniform limit of continuous functions and hence continuous.
\end{proof}

Now, we apply this to our group theoretic context. Let again $T$ be an NIP theory expanding a group, $\mathcal{G}$ a monster model of $T$, and $G \prec \mathcal{G}$ a small model. 

\begin{proposition}\label{prop: conv is left cont} Let $\nu \in \mathfrak{M}_{x}(\mathcal{G},G)$. Then the map $-*\nu:\mathfrak{M}_{x}(\mathcal{G},G) \to \mathfrak{M}_{x}(\mathcal{G},G)$ is continuous. 
\end{proposition}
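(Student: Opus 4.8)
The plan is to verify continuity coordinatewise. Recall from the preliminaries that the topology on $\mathfrak{M}_x(\mathcal{G})$ is the coarsest one for which each evaluation $\mu' \mapsto \mu'(\psi(x))$ is continuous (it is induced from the product $[0,1]^{\mathcal{L}_x(\mathcal{G})}$), and $\mathfrak{M}_x(\mathcal{G},G)$ carries the subspace topology. Since continuity of a map into a product space is equivalent to continuity of each of its coordinates, and since the image of $-\ast\nu$ lands inside $\mathfrak{M}_x(\mathcal{G},G)$ by Proposition \ref{prop: pros pres under conv}(2), it suffices to fix an arbitrary $\varphi(x) \in \mathcal{L}_x(\mathcal{G})$ and show that the single map $\mu \mapsto (\mu \ast \nu)(\varphi(x))$ is continuous on $\mathfrak{M}_x(\mathcal{G},G)$.

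First I would reduce the convolution to an honest $\otimes$-product. Under NIP every $\mu \in \mathfrak{M}_x(\mathcal{G},G)$ is Borel-definable over $G$ by Proposition \ref{NIP:measure}(1); in particular it is $G$-invariant, so $\mu \in \mathfrak{M}^{\inva}_x(\mathcal{G},G)$ and $(\mu,\nu)$ is $\ast$-Borel. By Proposition \ref{prop: gen prod ext type prod}(1) the extended product $\tilde{\otimes}$ then agrees with the usual $\otimes$, so unwinding Definition \ref{def: conv} gives $(\mu \ast \nu)(\varphi(x)) = \mu \otimes \nu_y(\varphi'(x;y))$, where $\varphi'(x;y) := \varphi(x \cdot y)$ is viewed as a partitioned formula with object variable $x$ and parameter variable $y$.

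The key step is then simply to invoke the general continuity result already established. Applying Proposition \ref{continuity} with $M := G$, with the fixed measure $\nu_y \in \mathfrak{M}_y(\mathcal{G})$, and with the partitioned formula $\varphi'(x;y)$, yields that the map $-\otimes \nu_y(\varphi'(x;y)) \colon \mathfrak{M}^{\inva}_x(\mathcal{G},G) \to [0,1]$ is continuous. Restricting this to the (closed) subset $\mathfrak{M}_x(\mathcal{G},G) \subseteq \mathfrak{M}^{\inva}_x(\mathcal{G},G)$ shows that $\mu \mapsto (\mu \ast \nu)(\varphi(x))$ is continuous, which by the first paragraph completes the proof.

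I expect no serious obstacle to remain: the argument is essentially a reparametrization, since convolution with a fixed $\nu$ is just the $\otimes$-product evaluated on the shifted formulas $\varphi'$, with the varying measure occupying the left (invariant) coordinate --- exactly the setting of Proposition \ref{continuity}. All of the analytic content (the smooth-approximation estimate and the Fubini-type swap via Fact \ref{fac: meas commute}(1)) has already been absorbed into that proposition. The only two points that require care are (i) confirming that the coordinate ranging over $\mathfrak{M}_x(\mathcal{G},G)$ really is the left (invariant) slot of the product --- which holds because convolution integrates $F^{\varphi'}_{\mu}$ against $\nu$, leaving $\mu$ in the definable left position --- and (ii) justifying the passage from $\tilde{\otimes}$ to $\otimes$, which is exactly where Borel-definability of $\mu$ in the NIP setting is used.
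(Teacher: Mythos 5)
Your proposal is correct and follows essentially the same route as the paper: both reduce continuity of $-*\nu$ to continuity of the maps $\mu \mapsto \mu \otimes \nu_y(\varphi(x\cdot y))$ and then invoke Proposition \ref{continuity}, the paper doing this by checking that preimages of basic open sets are open while you phrase it via coordinatewise continuity for the initial topology --- a purely cosmetic difference. Your additional bookkeeping (Proposition \ref{NIP:measure}(1) plus Proposition \ref{prop: gen prod ext type prod}(1) to identify $\tilde{\otimes}$ with $\otimes$, and Proposition \ref{prop: pros pres under conv}(2) for the image landing in $\mathfrak{M}_x(\mathcal{G},G)$) is left implicit in the paper but is exactly right.
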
 
\begin{proof} Let $U$ be a basic open subset of $\mathfrak{M}_{x}(\mathcal{G},G)$. That is, there exist formulas $\varphi_{1}(x),...,\varphi_{n}(x)$ in $\mathcal{L}_{x}(\mathcal{G})$ and real numbers $r_1,...,r_n,s_1,...,s_n \in [0,1]$ such that
\begin{equation*}
U = \bigcap_{i=1}^{n} \{\mu \in \mathfrak{M}_{x}(\mathcal{G},G): r_i < \mu(\varphi_{i}(x)) < s_i\}.
\end{equation*} 
Then we have
\begin{equation*} \Big(-*\nu\Big)^{-1}(U) = \bigcap_{i=1}^{n}\{\mu \in \mathfrak{M}_{x}(\mathcal{G},G): r_i < \mu * \nu(\varphi_{i}(x)) < s_i\}
\end{equation*} 
\begin{equation*} = \bigcap_{i=1}^{n}\{\mu \in \mathfrak{M}_{x}(\mathcal{G},G): r_i < \mu_{x} \otimes \nu_{y}(\varphi_{i}(x \cdot y)) < s_{i}\}
\end{equation*} 
\begin{equation*} = \bigcap_{i=1}^{n}\big(- \otimes \nu_{y}  \left(\varphi_{i}(x\cdot y)\right) \big)^{-1} \Big(\{\mu \in \mathfrak{M}_{x}(\mathcal{G},G): r_i < \mu(\varphi_{i}(x)) < s_i  \} \Big).
\end{equation*} 
Therefore, by continuity of the map $-\otimes\nu(\varphi(x \cdot y))$ (Proposition \ref{continuity}), the preimage of $U$ under $-* \nu$ is a finite intersection of open sets, and therefore open. 
\end{proof}

\subsection{The isomorphism}
In this subsection we show that the map $\rho:\mathfrak{M}_{x}(\mathcal{G},G) \to E(\mathfrak{M}_{x}(\mathcal{G},G), \conv(G))$ given by $\rho(\nu) = \rho_{\nu} = \nu * -$ is an isomorphism. We begin by recalling the topology on $\mathfrak{M}_{x}(\mathcal{G},G)^{\mathfrak{M}_{x}(\mathcal{G},G)}$.

\begin{remark}\label{rem: openset} The topology on $\mathfrak{M}_{x}(\mathcal{G},G)^{\mathfrak{M}_{x}(\mathcal{G},G)}$ is generated by the basic open sets of the form 
\begin{equation*}
    U = \bigcap_{i=1}^n \{f: \mathfrak{M}_{x}(\mathcal{G},G) \to \mathfrak{M}_{x}(\mathcal{G},G) \mid r_i < f(\nu_i)(\psi_i(x)) < s_i\},
\end{equation*}
with $n \in \mathbb{N}$, $r_i, s_i \in \mathbb{R}$, $\psi_i(x) \in \mathcal{L}_{x}(\mathcal{G})$, and $\nu_i \in \mathfrak{M}_{x}(\mathcal{G},G)$ (with possible repetitions of $\nu_i$'s and $\psi_i$'s).
\end{remark}

\begin{lemma}\label{lem: rho inj} The map $\rho$ is injective.
\end{lemma}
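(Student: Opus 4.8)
The plan is to exploit the fact that convolution has a right identity which already lies inside the domain. First I would observe that since $G$ is an elementary submodel of the group $\mathcal{G}$, the group identity $e$ lies in $G$; consequently the Dirac measure $\delta_e$ is finitely satisfiable in $G$ and therefore belongs to $\mathfrak{M}_x(\mathcal{G},G)$, so it is a legitimate argument for the functions in the image of $\rho$.

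With this in hand, the injectivity follows by a single evaluation. Suppose $\nu, \nu' \in \mathfrak{M}_x(\mathcal{G},G)$ satisfy $\rho_\nu = \rho_{\nu'}$ as elements of $\mathfrak{M}_x(\mathcal{G},G)^{\mathfrak{M}_x(\mathcal{G},G)}$; by definition of $\rho$ this means $\nu * \mu = \nu' * \mu$ for every $\mu \in \mathfrak{M}_x(\mathcal{G},G)$. Taking $\mu = \delta_e$ and invoking Proposition \ref{prop: calc conv}(1), which gives $\nu * \delta_e = \nu$ and $\nu' * \delta_e = \nu'$, I would conclude $\nu = \nu'$, as desired.

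There is essentially no obstacle in this argument; the only point requiring a word of justification is membership of $\delta_e$ in the domain $\mathfrak{M}_x(\mathcal{G},G)$, which is immediate from $e \in G$. (Note that this step does not even require the NIP hypothesis, although in the present subsection the domain is a semigroup under $*$ precisely by Proposition \ref{NIP:measure} in the NIP setting.)
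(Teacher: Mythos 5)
Your proof is correct and is essentially the paper's own argument: the paper also notes that $\rho_{\nu}(\delta_{e}) = \nu$ for every $\nu \in \mathfrak{M}_{x}(\mathcal{G},G)$, which is exactly your evaluation at $\delta_e$ via Proposition \ref{prop: calc conv}(1). Your added remark that $e \in G$ (so $\delta_e$ indeed lies in the domain) is a fine, if minor, elaboration of the same one-line proof.
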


\begin{proof} Note that for every $\nu \in \mathfrak{M}_{x}(\mathcal{G},G)$, $\rho_{\nu}(\delta_{e}) = \nu$, where $e$ is the identity of $\mathcal{G}$. 
\end{proof}

\begin{lemma}\label{lem: image of rho} If $\mu \in \mathfrak{M}_{x}(\mathcal{G},G)$, then $\rho_{\mu} \in \cl \big(\{\pi_{k}: k \in \conv(G)\} \big)$. So 
$$\rho\left(\mathfrak{M}_{x}(\mathcal{G},G) \right) \subseteq E(\mathfrak{M}_{x}(\mathcal{G},G), \conv(G)).$$
\end{lemma}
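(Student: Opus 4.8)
The plan is to exhibit $\rho$ as a continuous map whose restriction to $\conv(G)$ is exactly the generating family of the Ellis semigroup, and then to invoke density of $\conv(G)$ in $\mathfrak{M}_x(\mathcal{G},G)$ (Proposition \ref{convfs}) to push the whole image into the closure.

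First I would identify the generators of $E(\mathfrak{M}_x(\mathcal{G},G),\conv(G))$ with values of $\rho$. For $k = \sum_{i=1}^n r_i \delta_{g_i} \in \conv(G)$ and any $\nu \in \mathfrak{M}_x(\mathcal{G},G)$, Proposition \ref{prop: calc conv}(5) gives $(k * \nu)(\varphi(x)) = \sum_{i=1}^n r_i\, \nu(\varphi(g_i \cdot x)) = (k \cdot \nu)(\varphi(x))$ for every $\varphi(x) \in \mathcal{L}_x(\mathcal{G})$, so $\rho_k = k * (-) = \pi_k$. Hence $\rho(\conv(G)) = \{\pi_k : k \in \conv(G)\}$, whose closure in $\mathfrak{M}_x(\mathcal{G},G)^{\mathfrak{M}_x(\mathcal{G},G)}$ is by definition $E(\mathfrak{M}_x(\mathcal{G},G),\conv(G))$.

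Next I would check that $\rho$ is continuous, where the codomain carries the product topology. Since the product topology is that of pointwise convergence, it suffices to verify that for each fixed $\nu \in \mathfrak{M}_x(\mathcal{G},G)$ the coordinate map $\mu \mapsto \rho_\mu(\nu) = \mu * \nu$ is continuous; but this is precisely Proposition \ref{prop: conv is left cont}. Equivalently, in terms of the basic open sets of Remark \ref{rem: openset}, each constraint $r_i < f(\nu_i)(\psi_i(x)) < s_i$ pulls back under $\rho$ along the map $\mu \mapsto (\mu * \nu_i)(\psi_i(x))$, which is the composite of the continuous map $-*\nu_i$ with the continuous evaluation $\lambda \mapsto \lambda(\psi_i(x))$, hence continuous; a finite intersection of such preimages is open.

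Finally, by Proposition \ref{convfs} every $\mu \in \mathfrak{M}_x(\mathcal{G},G)$ lies in $\cl(\conv(G))$, so $\mathfrak{M}_x(\mathcal{G},G) = \cl(\conv(G))$. Continuity of $\rho$ then yields
\[
\rho\big(\mathfrak{M}_x(\mathcal{G},G)\big) = \rho\big(\cl(\conv(G))\big) \subseteq \cl\big(\rho(\conv(G))\big) = \cl\big(\{\pi_k : k \in \conv(G)\}\big) = E\big(\mathfrak{M}_x(\mathcal{G},G),\conv(G)\big),
\]
as desired. The essential input is the left-continuity of convolution (Proposition \ref{prop: conv is left cont}), which is where NIP enters through Proposition \ref{continuity}; I would also emphasize that passing to the convex hull $\conv(G)$ rather than to $G$ itself is what makes density available, since a general finitely satisfiable $\mu$ need not be approximated by Dirac types alone. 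Given the machinery already in place, no step is a genuine obstacle here — the one point meriting care is matching the product (pointwise-convergence) topology on $\mathfrak{M}_x(\mathcal{G},G)^{\mathfrak{M}_x(\mathcal{G},G)}$ to the coordinatewise continuity supplied by Proposition \ref{prop: conv is left cont}.
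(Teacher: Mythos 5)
Your proposal is correct, and it proves the lemma by a genuinely different (more modular) route than the paper. The paper argues directly: given a basic neighborhood $B_{\varepsilon}$ of $\rho_{\mu}$ determined by finitely many pairs $(\nu_i,\psi_i)$, it passes to smooth extensions $\widehat{\nu}_i$ over a larger model (Fact \ref{fac: NIP ext to smooth}), replaces each $\widehat{\nu}_i$ by a finite average $\Av_{\overline{b}_i}$ using finite approximability, and then uses finite satisfiability of $\mu$ to produce an explicit $k_{\mu} \in \conv(G)$ with $\pi_{k_{\mu}} \in B_{\varepsilon}$ --- in effect inlining a continuity argument and a density argument into a single approximation computation. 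You instead factor the statement into three results already available at that point in the paper: the identification $\rho_k = \pi_k$ for $k \in \conv(G)$ via Proposition \ref{prop: calc conv}(5); continuity of $\rho$ into the product topology, which by the universal property of products reduces to the coordinatewise continuity supplied by Proposition \ref{prop: conv is left cont}; and the density $\mathfrak{M}_x(\mathcal{G},G) = \cl(\conv(G))$ from Proposition \ref{convfs}, after which $\rho\left(\cl(\conv(G))\right) \subseteq \cl\left(\rho(\conv(G))\right)$ finishes. There is no circularity here, since Proposition \ref{prop: conv is left cont} is proved in the subsection preceding this lemma and does not depend on it. The underlying NIP input (smooth extension plus finite approximation) is the same in both arguments, but in yours it is invoked once, encapsulated inside Proposition \ref{continuity}, whereas the paper repeats the computation explicitly; your version is shorter and exposes the lemma as a formal consequence of left-continuity plus density, while the paper's version keeps the proof self-contained and exhibits the approximating elements $k_{\mu}$ concretely. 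Your closing points --- that passing to $\conv(G)$ rather than $G$ is what makes density available, and that the product topology must be matched to coordinatewise continuity --- are both accurate and are exactly the right things to flag.
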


\begin{proof} Let $U$ be an open subset of $\mathfrak{M}_{x}(\mathcal{G},G)^{\mathfrak{M}_{x}(\mathcal{G},G)}$ containing $\rho_{\mu}$. It is a union of basic open sets (see Remark \ref{rem: openset}), hence we can choose some $n \in \mathbb{N}$, a sufficiently small $\varepsilon > 0$ and some $\psi_{1}(x),...,\psi_{n}(x) \in \mathcal{L}_{x}(\mathcal{U})$ and $\nu_1,...,\nu_n \in \mathfrak{M}_{x}(\mathcal{G},G)$ such that
\begin{equation*}
   B_{\varepsilon} :=  \bigcap_{i=1}^n \{f  : |f(\nu_i)(\psi_i(x)) - \rho_{\mu}(\nu_{i})(\psi_i(x))| < \varepsilon\}  \subseteq U.
\end{equation*}
Let $H_0 \prec \mathcal{G}$ be a small model containing $G$ and the parameters of $\psi_1, \ldots, \psi_n$.
By Fact \ref{fac: unique ext of def meas}, we can choose a small model $H \prec \mathcal{G}$ and measures $\widehat{\nu}_i \in \mathfrak{M}_x(\mathcal{G})$ such that:
\begin{itemize}
	\item  $G \preceq H_0 \preceq H \prec\mathcal{G}$;
	\item $\widehat{\nu}_i|_{H_0} = \nu_i|_{H_0}$, for all $1 \leq i \leq n$;
	\item $\widehat{\nu}_i$ is smooth over $H$, for all $1 \leq i \leq n$.
\end{itemize}
Take some $0 < \varepsilon_{0} < \frac{\varepsilon}{3}$. Recall from Section \ref{sec: def of conv}  that $\psi'(x;y) = \psi(x \cdot y) \in \mathcal{L}_{xy}(H_0)$.
By Fact  \ref{fac: props of measures relation}(1a), let $\overline{b}_{i} = (b_{i,j} : 1 \leq j \leq m_i) \in H^{<\omega}$ be a $(\left(\psi'_i \right)^*,\varepsilon_{0})$-approximation for $\widehat{\nu}_{i}$. Then, using that $\mu$ is invariant over both $H_0$ and $H$ and $\widehat{\nu}_i$ is smooth over $H$ as in Proposition \ref{continuity}, for every $1 \leq i \leq n$ we have: 
%In the following computation we associate $\psi_{i}$ and $\psi_{i}'$. 
\begin{equation*}
    \rho_{\mu}(\nu_{i})(\psi_{i}(x))=\mu*\nu_{i}(\psi_{i}(x))=\mu\otimes\nu_{i}(\psi_{i}(x\cdot y))
\end{equation*}
\begin{equation*}
    =\int_{S_{y}(H_0)}F_{\mu,H_0}^{\psi'_{i}}d(\nu_{i}|_{H_0})=\int_{S_{y}(H_0)}F_{\mu,H_0}^{\psi'_{i}}d(\widehat{\nu}_{i}|_{H_0})
\end{equation*}
\begin{equation*}
    =\int_{S_{y}(H)}F_{\mu,H}^{\psi'_{i}}d(\widehat{\nu}_{i}|_{H})=\int_{S_{x}(H)}F_{\widehat{\nu}_{i},H}^{\left(\psi'_{i} \right)^{*}}d(\mu|_{H})
\end{equation*}
\begin{equation*}
    \approx_{\varepsilon_{0}}\int_{S_{x}(H)}F_{\Av_{\overline{b}_{i}}, H}^{\left(\psi'_{i}\right)^{*}}d(\mu|_{H})=\frac{1}{m_i}\sum_{j=1}^{m_i}\mu(\psi_{i}(x\cdot b_{i,j})).
\end{equation*}
Let $\Psi= \{\psi_{i}(x \cdot b_{i,j}) : 1 \leq i \leq n, 1 \leq j \leq m_i\}$.  Since $\mu$ is finitely satisfiable in $G$, we can find some $k_{\mu} \in  \conv(G)$ such that $k_{\mu}(\theta(x)) \approx_{\varepsilon_0} \mu(\theta(x))$ for each $\theta(x) \in \Psi$ (see the proof of Proposition \ref{convfs}). We claim that then $\pi_{k_{\mu}}$ is in $B_{\varepsilon}$. This follows directly from running the equations above in reverse: for each $1 \leq i \leq n$ we have (using that $k_\mu$ is obviously invariant over $G$, hence also over $H_0$)
\begin{equation*}
    \frac{1}{m_i}\sum_{j=1}^{m_i}\mu(\psi_{i}(x\cdot b_{i,j})) \approx_{\varepsilon_0} \frac{1}{m_i}\sum_{j=1}^{m_i}k_{\mu}(\psi_{i}(x\cdot b_{i,j})) 
\end{equation*}
\begin{equation*}
    = \int_{S_{x}(H)}F_{\Av_{\overline{b}_i}, H}^{\left(\psi'_{i} \right)^{*}}d(k_{\mu}|_{H}) \approx_{\varepsilon_{0}} \int_{S_{x}(H)}F_{\widehat{\nu}_{i}, H}^{\left(\psi'_{i} \right)^*}d \left(k_{\mu}|_{H} \right)
\end{equation*}
\begin{equation*}
 =\int_{S_{y}(H)}F_{k_{\mu}, H}^{\psi'_{i}}d(\widehat{\nu}_{i}|_{H})=\int_{S_{y}(H_0)}F_{k_{\mu}, H_0}^{\psi'_{i}}d(\widehat{\nu}_{i}|_{H_0})
\end{equation*}
\begin{equation*}
    =k_{\mu} \otimes \nu_i(\psi_i(x \cdot y)) = \pi_{k_{\mu}}(\nu_{i})(\psi_i(x)).
\end{equation*}
Hence $\rho_{\mu}(\nu_{i})(\psi_{i}(x)) \approx_{3\varepsilon_{0}}\pi_{k_{\mu}}(\nu_{i})(\psi_i(x))$ for each $1 \leq i \leq n$, so $\pi_{k_\mu} \in B_{\varepsilon} \subseteq U$ and we are finished.
\end{proof}

\begin{lemma}\label{lem: surj} $\rho\left(\mathfrak{M}_{x}(\mathcal{G},G) \right) = E(\mathfrak{M}_{x}(\mathcal{G},G), \conv(G))$.
\end{lemma}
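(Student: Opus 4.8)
The plan is to prove the remaining inclusion $E(\mathfrak{M}_{x}(\mathcal{G},G), \conv(G)) \subseteq \rho\left(\mathfrak{M}_{x}(\mathcal{G},G) \right)$, since the reverse inclusion is exactly Lemma \ref{lem: image of rho}. The first, crucial, observation is that the semigroup action $\pi$ coincides with left convolution: for any $k = \sum_{i=1}^n r_i \delta_{g_i} \in \conv(G)$ and $\mu \in \mathfrak{M}_x(\mathcal{G},G)$, comparing the definition of $\pi_k(\mu)$ with Proposition \ref{prop: calc conv}(5) gives $\pi_k(\mu)(\varphi(x)) = \sum_{i} r_i \mu(\varphi(g_i \cdot x)) = (k * \mu)(\varphi(x))$, i.e.\ $\pi_k = \rho_k$. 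Thus $\{\pi_k : k \in \conv(G)\} = \rho(\conv(G))$, and the Ellis semigroup is precisely the pointwise closure of $\rho(\conv(G))$ inside $\mathfrak{M}_x(\mathcal{G},G)^{\mathfrak{M}_x(\mathcal{G},G)}$.

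Now fix $f \in E(\mathfrak{M}_{x}(\mathcal{G},G), \conv(G))$. As this set is the closure of $\{\pi_k : k \in \conv(G)\}$ in the product topology, it consists of functions $\mathfrak{M}_x(\mathcal{G},G) \to \mathfrak{M}_x(\mathcal{G},G)$, and there is a net $(k_i)_{i \in I}$ in $\conv(G)$ with $\pi_{k_i} \to f$ pointwise. I would set $\nu := f(\delta_e)$; note $\nu \in \mathfrak{M}_x(\mathcal{G},G)$, and this is the only possible candidate since $\rho_\nu(\delta_e) = \nu * \delta_e = \nu$ by Proposition \ref{prop: calc conv}(1). Evaluating the net at $\delta_e$ and using the same part of Proposition \ref{prop: calc conv}, we obtain $k_i = k_i * \delta_e = \pi_{k_i}(\delta_e) \to f(\delta_e) = \nu$ in $\mathfrak{M}_x(\mathcal{G},G)$.

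It then remains to check $f = \rho_\nu$. Fix an arbitrary $\mu \in \mathfrak{M}_x(\mathcal{G},G)$. On one hand $\pi_{k_i}(\mu) = k_i * \mu \to f(\mu)$ by the defining net. On the other hand, the left-continuity of convolution established in Proposition \ref{prop: conv is left cont} says that the map $- * \mu$ is continuous, so $k_i \to \nu$ forces $k_i * \mu \to \nu * \mu$. Since $\mathfrak{M}_x(\mathcal{G},G)$ is Hausdorff, limits are unique, whence $f(\mu) = \nu * \mu = \rho_\nu(\mu)$. As $\mu$ was arbitrary, $f = \rho_\nu \in \rho\left(\mathfrak{M}_x(\mathcal{G},G)\right)$, completing the inclusion. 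The genuine content of the argument is carried entirely by Proposition \ref{prop: conv is left cont}: once left-continuity of convolution is in hand, the proof is a short application, and the only points requiring care are recognizing that the action is literally convolution (so that $\pi_{k_i}(\delta_e) = k_i$ recovers the candidate $\nu$) and arguing with nets rather than sequences, since the product topology on $\mathfrak{M}_x(\mathcal{G},G)^{\mathfrak{M}_x(\mathcal{G},G)}$ need not be metrizable.
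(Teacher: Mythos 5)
Your proof is correct and takes essentially the same route as the paper's: extract a net $\pi_{k_i} \to f$, take the candidate $\nu := f(\delta_e)$, show $k_i \to \nu$ in $\mathfrak{M}_x(\mathcal{G},G)$, and invoke left-continuity of convolution (Proposition \ref{prop: conv is left cont}) to conclude $f(\mu) = \nu * \mu$ for all $\mu$. The only cosmetic difference is that you make the identification $\pi_k = \rho_k$ explicit at the outset via Proposition \ref{prop: calc conv}(5), whereas the paper uses it implicitly through the composition identity $\pi_{k_i} \circ \rho_{\nu} = \rho_{k_i * \nu}$.
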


\begin{proof} Let $f \in E\left(\mathfrak{M}_{x}(\mathcal{G},G),\conv(G) \right)$ be arbitrary. Then $f \in \cl\left(\{\pi_{k}: k \in \conv(G)\} \right)$, and so there exists a net $(k_{i})_{i \in I}$ with $k_i \in \conv(G)$ such that $\lim_{i \in I} \pi_{k_i} = f$. Then, using Remark \ref{rem: openset}, for every $\psi(x) \in \mathcal{L}_{x}(\mathcal{G})$ and $\nu \in \mathfrak{M}_{x}(\mathcal{G},G)$ we  have
\begin{equation*}
    \lim_{i \in I} \pi_{k_i}(\nu)(\psi(x)) = f(\nu)(\psi(x)).
\end{equation*}
Consider $\delta_{e}$, where $e \in \mathcal{G}$ is the identity. Let $\mu_{f} := f(\delta_{e}) \in \mathfrak{M}_{x}(\mathcal{G},G)$. We claim that the net $(k_i)_{i \in I}$ converges to $\mu_{f}$ in $\mathfrak{M}_{x}(\mathcal{G},G)$. Indeed, for any $\psi(x) \in \mathcal{L}_{x}(\mathcal{G})$ we have
\begin{equation*}
    \lim_{i \in I} {k_i}(\psi(x))=\lim_{i \in I} \pi_{k_i}(\delta_{e})(\psi(x)) = f(\delta_{e})(\psi(x)) = \mu_f(\psi(x)).
\end{equation*}
Next, we claim that for any $\nu \in \mathfrak{M}_{x}(\mathcal{G},G)$, we have that $f(\nu) = \rho_{\mu_f}(\nu)$. Indeed, first we have
\begin{equation*}
    f(\nu) = \lim_{i \in I}\pi_{k_i}(\nu) = \lim_{i \in I}[\pi_{k_i} \circ \rho_{\nu}](\delta_{e}) = \lim_{i \in I} \rho_{k_i * \nu}(\delta_{e}) = \lim_{i \in I} [k_i * \nu].
\end{equation*}
The map $-*\nu: \mathfrak{M}_{x}(\mathcal{G},G) \to \mathfrak{M}_{x}(\mathcal{G},G)$ is continuous by Proposition \ref{prop: conv is left cont}, hence it  commutes with net limits. Therefore,
\begin{equation*}
\lim_{i \in I} [k_i * \nu] = [\lim_{i \in I} k_i] * \nu  = \mu_{f} * \nu = \rho_{\mu_{f}}(\nu).
\end{equation*}
We conclude that $f = \rho_{\mu_f} = \mu_f*-$.
\end{proof} 

\begin{lemma}\label{lem: rho inv cont} The map $\rho^{-1}: E(\mathfrak{M}_{x}(\mathcal{G},G),\conv(G)) \to \mathfrak{M}_{x}(\mathcal{G},G)$ is a continuous bijection.
\end{lemma}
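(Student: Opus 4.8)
The plan is to show that $\rho^{-1}$ is continuous by pulling back a basic open subset of $\mathfrak{M}_{x}(\mathcal{G},G)$ through $\rho^{-1}$ and verifying that its preimage is open in the Ellis semigroup $E(\mathfrak{M}_{x}(\mathcal{G},G),\conv(G))$ (with the subspace topology inherited from the product topology on $\mathfrak{M}_{x}(\mathcal{G},G)^{\mathfrak{M}_{x}(\mathcal{G},G)}$). Since $\rho$ is a bijection onto the Ellis semigroup by Lemmas \ref{lem: rho inj} and \ref{lem: surj}, $\rho^{-1}$ is a well-defined bijection, so the only content is continuity. The crucial observation (already extracted in the proof of Lemma \ref{lem: surj}) is the evaluation-at-identity trick: for every $\nu \in \mathfrak{M}_{x}(\mathcal{G},G)$ we have $\rho_{\nu}(\delta_e) = \nu$, so if $f = \rho_{\mu_f} \in E(\mathfrak{M}_{x}(\mathcal{G},G),\conv(G))$ then $\rho^{-1}(f) = f(\delta_e) = \mu_f$. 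In other words, $\rho^{-1}$ is nothing but the restriction to the Ellis semigroup of the evaluation map $\mathrm{ev}_{\delta_e} : \mathfrak{M}_{x}(\mathcal{G},G)^{\mathfrak{M}_{x}(\mathcal{G},G)} \to \mathfrak{M}_{x}(\mathcal{G},G)$, $f \mapsto f(\delta_e)$.

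First I would fix a basic open set $V \subseteq \mathfrak{M}_{x}(\mathcal{G},G)$ of the form
\begin{equation*}
V = \bigcap_{i=1}^n \{ \lambda \in \mathfrak{M}_{x}(\mathcal{G},G) : r_i < \lambda(\psi_i(x)) < s_i \}
\end{equation*}
for some $\psi_i(x) \in \mathcal{L}_x(\mathcal{G})$ and reals $r_i, s_i$. Then I would compute
\begin{equation*}
(\rho^{-1})^{-1}(V) = \{ f \in E(\mathfrak{M}_{x}(\mathcal{G},G),\conv(G)) : f(\delta_e) \in V \} = \bigcap_{i=1}^n \{ f : r_i < f(\delta_e)(\psi_i(x)) < s_i \}.
\end{equation*}
Each set on the right is, by Remark \ref{rem: openset}, the intersection of the Ellis semigroup with a basic open subset of the product space $\mathfrak{M}_{x}(\mathcal{G},G)^{\mathfrak{M}_{x}(\mathcal{G},G)}$ (take the single index point $\nu_i := \delta_e$ and formula $\psi_i$). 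Hence $(\rho^{-1})^{-1}(V)$ is relatively open in the Ellis semigroup, which is exactly continuity of $\rho^{-1}$.

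The argument is genuinely short because all the substance has already been front-loaded into the preceding lemmas: Lemma \ref{lem: rho inj} gives injectivity of $\rho$ via the same identity $\rho_\nu(\delta_e) = \nu$, Lemma \ref{lem: surj} gives surjectivity onto the Ellis semigroup and identifies $\rho^{-1}(f) = f(\delta_e)$, and Lemma \ref{lem: rho inv cont} now just records that the inverse is continuous. I do not anticipate a serious obstacle; the only point requiring a little care is to match the shape of a basic open set of $\mathfrak{M}_{x}(\mathcal{G},G)$ against the shape of a basic open set of the product topology described in Remark \ref{rem: openset}, and to note that the evaluation point $\delta_e$ is a legitimate element of $\mathfrak{M}_{x}(\mathcal{G},G)$ so that it may serve as the index $\nu_i$. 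It is worth remarking that this establishes $\rho^{-1}$ is a continuous bijection between compact Hausdorff spaces, whence $\rho$ itself is automatically a homeomorphism; this is presumably the next step assembled (together with the semigroup-homomorphism property) into the final isomorphism Theorem \ref{thm: Ellis grp iso}.
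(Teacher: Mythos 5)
Your proof is correct and takes essentially the same route as the paper: both first invoke Lemmas \ref{lem: rho inj} and \ref{lem: surj} for bijectivity, then show continuity by computing that the preimage of a basic open subset of $\mathfrak{M}_{x}(\mathcal{G},G)$ under $\rho^{-1}$ equals the trace on $E(\mathfrak{M}_{x}(\mathcal{G},G),\conv(G))$ of a basic open set of the product topology from Remark \ref{rem: openset}, via the identity $\rho^{-1}(f) = f(\delta_e)$. Your explicit framing of $\rho^{-1}$ as the evaluation map at $\delta_e$ is precisely what the paper's computation uses implicitly (resting on $\nu * \delta_e = \nu$ from Proposition \ref{prop: calc conv}), so there is no substantive difference.
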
 
\begin{proof} The map $\rho^{-1}$ is a well-defined bijection by Lemmas \ref{lem: rho inj} and \ref{lem: surj}.
Let $U$ be a basic open subset of $\mathfrak{M}_{x}(\mathcal{G},G)$, say 
\begin{equation*}U = \bigcap_{i=1}^{n} \{\mu \in \mathfrak{M}_{x}(\mathcal{G},G): r_i < \mu(\varphi_{i}(x)) <s_i\}
\end{equation*}
for some $n \in \mathbb{N}$, $\varphi_{i}(x) \in \mathcal{L}_{x}(\mathcal{U})$ and $r_i,s_i \in [0,1]$.
Then, 
\begin{equation*} \left(\rho^{-1} \right)^{-1}(U) = \bigcap_{i=1}^{n}\{ f \in E(\mathfrak{M}_{x}(\mathcal{G},G),\conv(G)) : r_i<f(\delta_{e})(\varphi_{i}(x)) < s_i\}.
\end{equation*}
This is a restriction of a basic open subset (see Remark \ref{rem: openset}) to $E(\mathfrak{M}_{x}(\mathcal{G},G),\conv(G))$, hence open in the subspace topology. 
\end{proof}

\begin{theorem}\label{thm: Ellis grp iso} The map $\rho: (\mathfrak{M}_{x}(\mathcal{G},G),*) \to E(\mathfrak{M}_{x}(\mathcal{G},G),\conv(G))$ is a homeomorphism which respects the semigroup operation, and therefore an isomorphism.
\end{theorem}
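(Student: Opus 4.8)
The plan is to assemble the lemmas and propositions already established. By Lemma \ref{lem: rho inj} and Lemma \ref{lem: surj}, the map $\rho$ is a bijection from $\mathfrak{M}_{x}(\mathcal{G},G)$ onto $E(\mathfrak{M}_{x}(\mathcal{G},G),\conv(G))$, and by Lemma \ref{lem: rho inv cont} the inverse $\rho^{-1}$ is continuous. Thus everything except the continuity of $\rho$ itself and the compatibility with the semigroup operations is already in hand, and the remaining work is purely formal.

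First, to upgrade $\rho^{-1}$ to a homeomorphism, I would invoke the standard fact that a continuous bijection between compact Hausdorff spaces is automatically a homeomorphism. The codomain $\mathfrak{M}_{x}(\mathcal{G},G)$ is compact Hausdorff, being a closed subset of $[0,1]^{\mathcal{L}_{x}(\mathcal{G})}$; and the Ellis semigroup $E(\mathfrak{M}_{x}(\mathcal{G},G),\conv(G))$ is compact Hausdorff as a closed subset of the product space $\mathfrak{M}_{x}(\mathcal{G},G)^{\mathfrak{M}_{x}(\mathcal{G},G)}$, which is compact by Tychonoff. Hence $\rho^{-1}$ is a homeomorphism, and therefore so is $\rho$. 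Note that this route lets me avoid arguing continuity of $\rho$ directly: the appeal to compactness extracts it for free, which is precisely why left-continuity of convolution (Proposition \ref{prop: conv is left cont}) was needed only inside the surjectivity argument of Lemma \ref{lem: surj}.

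Second, for the algebraic structure I would verify that $\rho$ carries $\ast$ to the composition operation $\circ$ of the Ellis semigroup. For any $\mu,\nu,\lambda \in \mathfrak{M}_{x}(\mathcal{G},G)$, using associativity of $\ast$ on finitely satisfiable measures (Proposition \ref{NIP:measure}(3)) we have
\begin{equation*}
\rho(\mu \ast \nu)(\lambda) = (\mu \ast \nu)\ast \lambda = \mu \ast (\nu \ast \lambda) = \rho_{\mu}(\rho_{\nu}(\lambda)) = (\rho_{\mu} \circ \rho_{\nu})(\lambda),
\end{equation*}
so $\rho(\mu \ast \nu) = \rho(\mu) \circ \rho(\nu)$. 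Combined with the first step, this shows $\rho$ is simultaneously a homeomorphism and a semigroup isomorphism onto $E(\mathfrak{M}_{x}(\mathcal{G},G),\conv(G))$.

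There is no serious obstacle remaining: the genuine difficulties (bijectivity, continuity of the inverse via the smooth-approximation computation in Lemma \ref{lem: image of rho} and Lemma \ref{lem: surj}, associativity of $\ast$, and left-continuity of convolution) have all been discharged in the preceding results. The only point worth flagging is the compactness argument in the first step, which replaces a direct continuity proof for $\rho$ and keeps the final assembly short.
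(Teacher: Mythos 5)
Your proposal is correct and follows essentially the same route as the paper: the paper likewise deduces that $\rho$ is a homeomorphism from the fact that $\rho^{-1}$ is a continuous bijection between compact Hausdorff spaces (Lemma \ref{lem: rho inv cont}), and verifies the semigroup compatibility by the same associativity computation $\rho(\mu * \nu)(\lambda) = (\mu*\nu)*\lambda = \mu*(\nu*\lambda) = (\rho_\mu \circ \rho_\nu)(\lambda)$. Your added justifications (Tychonoff for compactness of the Ellis semigroup, explicit citation of Proposition \ref{NIP:measure}(3) for associativity) are details the paper leaves implicit, but there is no substantive difference.
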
 

\begin{proof} The map $\rho$ is a homeomorphism since, by Lemma \ref{lem: rho inv cont}, $\rho^{-1}$ is a continuous bijection between compact Hausdorff spaces.  And note that $\rho(\mu * \nu)(\lambda) = (\mu * \nu) * \lambda = \mu * (\nu * \lambda) = \rho_{\mu} ( \nu * \lambda) = \rho_{\mu} \circ \rho_{\nu}(\lambda)$, hence $\rho(\mu * \nu) = \rho_{\mu} \circ \rho_{\nu}$.
\end{proof} 

\begin{remark}\label{rem : without conv}
	On the other hand, if $T$ is NIP, then 
		$$E(\mathfrak{M}_{x}(\mathcal{G},G),G) \cong E(S_{x}(\mathcal{G},G),G),$$
	
	\noindent and so $\cong (S_{x}(\mathcal{G},G),*)$ by Fact \ref{fac: New isom}. For a countable $G \prec \mathcal{G}$, this is an immediate consequence of the corresponding observation in the context of tame metrizable dynamical systems (see e.g.~\cite[Theorem 1.5]{glasner2006tame}); and for an arbitrary small $G \prec \mathcal{G}$, an approximation argument with smooth measures (as in Lemma \ref{lem: image of rho}) can be adapted. As typically $(\mathfrak{M}_{x}(\mathcal{G},G),*) \not \cong (S_{x}(\mathcal{G},G),*)$, we see that it was crucial to consider the action of $\conv(G)$ rather than $G$ in our characterization of $(\mathfrak{M}_{x}(\mathcal{G},G),*)$ as an Ellis semigroup.
	
\end{remark}

\bibliographystyle{plain}
\bibliography{refs}

\end{document}